\theoremstyle{plain}
\newtheorem{theorem}{Theorem}[section]
\newtheorem{lemma}[theorem]{Lemma}
\newtheorem{proposition}[theorem]{Proposition}
\newtheorem{corollary}[theorem]{Corollary}
\newtheorem{claim}[theorem]{Claim}
\newtheorem{problem}[theorem]{Problem}
\theoremstyle{definition}
\newcommand{\F}{\mathcal {F}}
\newcommand{\Ra}{\Rightarrow}
\newcommand{\U}{\mathcal U}
\newcommand{\V}{\mathcal V}
\newcommand{\A}{\mathcal A}
\newcommand{\LL}{\mathcal L}
\newcommand{\w}{\omega}
\newcommand{\IN}{\mathbb N}
\newcommand{\IC}{\mathbb C}
\newcommand{\C}{\mathcal C}
\newcommand{\E}{\mathcal E}
\newcommand{\la}{\langle}
\newcommand{\ra}{\rangle}
\title{Algebra in superextensions of inverse semigroups}
\author{Taras Banakh and Volodymyr Gavrylkiv}
\address[T.~Banakh]{Ivan Franko University of Lviv, Ukraine and 
Jan Kochanowski University, Kielce, Poland}
\email{t.o.banakh@gmail.com}
\address[V.~Gavrylkiv]{Vasyl Stefanyk Precarpathian National University,
Ivano-Frankivsk,
Ukraine}
\email{vgavrylkiv@yahoo.com}
\subjclass[2010]{20M10, 20M14, 20M17, 20M18, 54B20}
\keywords{inverse semigroup, regular semigroup,  Clifford
semigroup, superextension, semigroup of filters}
\thanks{The first author has been partially financed by NCN means granted by decision DEC-2011/01/B/ST1/01439}
\begin{document}

\begin{abstract}
We find necessary and sufficient conditions on an (inverse)
semigroup $X$ under which its semigroups of maximal linked systems
$\lambda(X)$, filters $\varphi(X)$, linked upfamilies $N_2(X)$,
and upfamilies $\upsilon(X)$ are inverse.
\end{abstract}
\maketitle

\section{Introduction}

In this paper we  investigate  the algebraic structure of various
extensions of an inverse semigroup  $X$ and detect semigroups
whose extensions $\lambda(X)$, $N_2(X)$, $\varphi(X)$,
$\upsilon(X)$ are inverse semigroups.

The thorough study of extensions of semigroups was started in
\cite{G2} and continued in \cite{BG2}--\cite{BGN}. The
largest among these extensions is the semigroup $\upsilon(X)$ of
all upfamilies on $X$.

A family $\F$ of subsets of a set $X$ is called an {\em upfamily}
if each set $F\in\F$ is not empty and for each set $F\in\F$ any
subset $E\supset F$ of $X$ belongs to $\F$. The space of all
upfamilies on $X$ is denoted by $\upsilon(X)$. It is a closed
subspace of the double power-set $\mathcal P(\mathcal P(X))$
endowed with the compact Hausdorff topology of the Tychonoff
product $\{0,1\}^{\mathcal P(X)}$. Identifying each point $x\in X$
with the upfamily $\la x\ra=\{A\subset X:x\in A\}$, we can
identify $X$ with a subspace of $\upsilon(X)$. Because of that we
call $\upsilon(X)$ an extension of $X$.

The compact Hausdorff space $\upsilon(X)$ contains many other
important extensions of $X$ as closed subspaces. In particular, it
contains the spaces $N_2(X)$ of linked upfamilies, $\lambda(X)$ of
maximal linked upfamilies, $\varphi(X)$ of filters, and  $\beta(X)$ of
ultrafilters on $X$; see \cite{G1}. Let us recall that an upfamily
$\F\in\upsilon(X)$ is called
\begin{itemize}
\item {\em linked} if $A\cap B\ne \emptyset$ for any sets
$A,B\in\F$; \item {\em maximal linked} if $\F=\F'$ for any linked
upfamily $\F'\in\upsilon(X)$ that contains $\F$; \item a {\em
filter} if $A\cap B\in\F$ for any $A,B\in\F$; \item an {\em
ultrafilter} if  $\F=\F'$ for any filter $\F'\in\upsilon(X)$ that
contains $\F$.
\end{itemize}
The family $\beta(X)$ of all ultrafilters on $X$ is called the
{\em Stone-\v Cech extension} and the family $\lambda(X)$ of all
maximal linked upfamilies is called the {\em superextension} of
$X$, see \cite{vM} and \cite{Ve}. The arrows in the following diagram
denote the identity inclusions between various extensions of a set
$X$.
$$\xymatrix{
X\ar[r]&\beta(X)\ar[r]\ar[d]&\lambda(X)\ar[d]&\\
 &\varphi(X)\ar[r]&N_2(X)\ar[r]&\upsilon(X).}
$$
 Any map $f:X\to Y$ induces a continuous map $$\upsilon f:\upsilon(X)\to\upsilon(Y),\quad \upsilon f:\F\mapsto \{A\subset Y:f^{-1}(A)\in\F\},$$ such that $\upsilon f(\beta(X))\subset\beta(Y)$,
$\upsilon f(\lambda(X))\subset\lambda(Y)$, $\upsilon
f(\varphi(X))\subset\varphi(Y)$, and $\upsilon f(N_2(X))\subset
N_2(Y)$. If the map $f$ is injective, then  $\upsilon f$ is a
topological embedding, which allows us to identify the extensions
$\beta(X)$, $\lambda(X)$, $\varphi(X)$, $N_2(X)$, $\upsilon(X)$
with corresponding closed subspaces in $\beta(Y)$, $\lambda(Y)$,
$\varphi(Y)$, $N_2(Y)$, and $\upsilon(Y)$, respectively.

In \cite{G2} it was observed that any (associative) binary
operation $*:X\times X\to X$ can be extended  to an
(associative) binary operation
$*:\upsilon(X)\times\upsilon(X)\to\upsilon(X)$ defined by the
formula:
$$\mathcal A*\mathcal B=\big\la\bigcup_{a\in A}a*B_a:A\in\A,\;\;\{B_a\}_{a\in A}\subset\mathcal B\big\ra$$for upfamilies $\A,\mathcal B\in\upsilon(X)$.
Here for a family $\C$ of non-empty subsets of $X$ by
$$\la\C\ra=\{A\subset X:\exists C\in\C\mbox{ with }C\subset
A\}$$we denote the upfamily generated by the family $\C$.

According to \cite{G1}, for each semigroup $X$, $\upsilon(X)$ is a compact
Hausdorff right-topological semigroup containing the subspaces
$\beta(X)$, $\lambda(X)$, $\varphi(X)$, $N_2(X)$ as closed
subsemigroups. Algebraic and topological properties
of these semigroups have been studied in \cite{G2},
\cite{BG2}--\cite{BGN}. In particular, in \cite{BG10} we studied
properties of extensions of groups while \cite{BGs} was devoted to
extensions of semilattices. There are two important classes of
semigroups that include all groups and all semilattices. Those are
the classes of inverse and Clifford semigroups.

Let us recall that a semigroup $S$ is {\em inverse} if for any
element $x\in S$ there is a unique element $x^{-1}$ (called {\em
the inverse of} $x$) such that $xx^{-1}x=x$ and
$x^{-1}xx^{-1}=x^{-1}$. A semigroup $S$ is {\em regular} if each
element $x\in S$ is {\em regular} in the sense that $x\in xSx$. It
is known \cite[II.1.2]{Pet} that a semigroup $S$ is inverse if and
only if $S$ is regular and idempotents of $S$ commute. For a
semigroup $S$ by $E=\{x\in S:xx=x\}$ we denote the set of
idempotents of $S$. It follows that for an inverse semigroup, $E$
is a commutative subsemigroup of $S$ and hence $E$ is a {\em
maximal semilattice} in $S$. Let us recall that a {\em
semilattice} is a set endowed with an associative commutative
idempotent operation. A semigroup $S$ is called {\em linear} if
$xy\in\{x,y\}$ for any points $x,y\in X$. Each linear commutative
semigroup is a semilattice.

A semigroup $S$ is {\em Clifford} if it is a union of groups. An
inverse semigroup $S$ is Clifford if and only if $xx^{-1}=x^{-1}x$
for all $x\in X$. A semigroup $S$ is called {\em sub-Clifford} if
it is a union of cancellative semigroups. A semigroup $S$ is
sub-Clifford if and only if for any positive integer numbers $n<m$
and any $x\in S$ the equality $x^{n+1}=x^{m+1}$ implies $x^n=x^m$.
Each subsemigroup of a Clifford semigroup is sub-Clifford and each
finite sub-Clifford semigroup is Clifford. A commutative semigroup
is Clifford if and only if it is inverse if and only if it is
regular. A semigroup $X$ is called {\em Boolean} if $x^3=x$ for
all $x\in X$. Each Boolean semigroup is Clifford.

It is well-known that the class of inverse (Clifford) semigroups
includes all groups and all semilattices. Moreover, each inverse
Clifford semigroup $S$ decomposes into the union $S=\bigcup_{e\in
E}H_e$ of maximal subgroups $H_e=\{x\in S:xx^{-1}=e=x^{-1}x\}$
indexed by the idempotents, which commute with all elements of
$S$; see \cite[II.2]{Pet}.

The algebraic structure of extensions of groups was studied in
details in \cite{BG10}. Extensions of semilattices were
investigated in \cite{BGs}. In particular, in \cite{BGs} it was
shown  that for a semigroup $X$ the superextension $\lambda(X)$ is
a semilattice if and only if the semigroup $\upsilon(X)$ is a
semilattice if and only if $X$ is a finite linear semilattice.

In Theorems~\ref{t1l}--\ref{t1u} below we shall list all
semigroups $X$ whose extensions are (commutative) inverse
semigroups.

For a natural number $n$ by $C_n=\{z\in\IC:z^n=1\}$ we denote the
cyclic group of order $n$ and by $L_n$ the linear semilattice
$\{0,\dots,n-1\}$ of order $n$, endowed with the operation of
minimum. In particular, $L_0$ is an empty semigroup.

For two semigroups $(X,*)$ and $(Y,\star)$ by $X\sqcup Y$
we denote the disjoint union of these semigroups endowed with the
semigroup operation
$$
x\circ y=\begin{cases}
x* y&\mbox{if $x,y\in X$,}\\
x&\mbox{if $x\in X$ and $y\in Y$,}\\
y&\mbox{if $x\in Y$ and $y\in X$,}\\
x\star y&\mbox{if $x,y\in Y$}.
\end{cases}
$$
The semigroup $X\sqcup Y$ will be called the {\em disjoint ordered union} of the semigroups $X$ and $Y$. Observe that the operation of disjoint ordered union is associative in the sense that $(X\sqcup Y)\sqcup Z=X\sqcup (Y\sqcup Z)$ for any semigroups $X,Y,Z$.

We say that a subsemigroup $X$ of a semigroup $Y$ is {\em regular
in $Y$} if each element $x\in X$ is regular in $Y$.

\begin{theorem}\label{t1l} For a semigroup $X$ and its superextension $\lambda(X)$ the following conditions are
equivalent:
\begin{enumerate}
\item[{\rm(1)}] $\lambda(X)$ is a commutative Clifford semigroup;
\item[{\rm(2)}] $\lambda(X)$ is an inverse semigroup;
\item[{\rm(3)}] the idempotents of the
semigroup $\lambda(X)$ commute and $\lambda(X)$ is sub-Clifford or
regular in $N_2(X)$;
\item[{\rm(4)}] $X$ is a finite commutative Clifford
semigroup, isomorphic to one of the following semigroups:
 $C_2$, $C_3$, $C_4$, $C_2\times C_2$, $L_2\times C_2$, $L_1\sqcup C_2$, $L_n$,
or $C_2\sqcup L_n$ for some $n\in\w$.
\end{enumerate}
\end{theorem}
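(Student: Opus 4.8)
The plan is to prove the cycle of implications $(4)\Rightarrow(1)\Rightarrow(2)\Rightarrow(3)\Rightarrow(4)$. Two of the links come for free from the facts recalled in the introduction. A commutative Clifford semigroup is inverse, which yields $(1)\Rightarrow(2)$. If $\lambda(X)$ is inverse, then its idempotents commute and each $\A\in\lambda(X)$ satisfies $\A\in\A\lambda(X)\A\subseteq\A N_2(X)\A$ because $\lambda(X)$ is a subsemigroup of $N_2(X)$; hence $\lambda(X)$ is regular in $N_2(X)$, giving $(2)\Rightarrow(3)$.

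For $(4)\Rightarrow(1)$ I would go through the finite list. The semilattices $X=L_n$ are handled by the theorem of \cite{BGs} quoted above, and the groups $C_2$, $C_3$, $C_4$, $C_2\times C_2$ by the description of superextensions of groups in \cite{BG10}. The three remaining semigroups $L_2\times C_2$, $L_1\sqcup C_2$ and $C_2\sqcup L_n$ are built from these pieces by the operations $\times$ and $\sqcup$, so the real content is two permanence lemmas: that $\lambda(X\times Y)$ is a commutative Clifford semigroup whenever $\lambda(X)$ and $\lambda(Y)$ are and one of the factors is small (using the natural homomorphism $\lambda(X\times Y)\to\lambda(X)\times\lambda(Y)$ to reduce the computation of products to the factors), and that $\lambda(X\sqcup Y)$ is a commutative Clifford semigroup whenever $\lambda(X)$ and $\lambda(Y)$ are (using that in $X\sqcup Y$ the factor $X$ is both left and right absorbing on $Y$, which makes products of maximal linked systems on $X\sqcup Y$ expressible through products on $X$ and on $Y$ separately). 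Together with the base cases, these finish $(4)\Rightarrow(1)$.

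The substantial implication is $(3)\Rightarrow(4)$. Suppose the idempotents of $\lambda(X)$ commute and $\lambda(X)$ is sub-Clifford or regular in $N_2(X)$. As $X$ embeds into $\lambda(X)$ through principal ultrafilters, the idempotents of $X$ commute, so the set $E$ of idempotents of $X$ is a semilattice. I would first prove that $E\cong L_n$ for some $n\in\w$: the subsemigroup $\lambda(E)\subseteq\lambda(X)$ inherits commuting idempotents together with (an appropriate form of) the regularity or sub-Clifford hypothesis, which forces $\lambda(E)$ to be a semilattice, whence $E$ is a finite linear semilattice by \cite{BGs}. Next I would show that $X$ is finite and Clifford: the two alternatives in (3), in the presence of commuting idempotents, are exactly what makes $\lambda(X)$ itself an inverse semigroup --- a superextension analogue of the ``regular plus commuting idempotents'' criterion --- so $X$ is inverse; and if $X$ is an inverse semigroup over the finite chain $L_n$ that is infinite, or has a maximal subgroup not among $C_1,C_2,C_3,C_4,C_2\times C_2$, then (invoking \cite{BG10} for groups and an explicit construction otherwise) $\lambda(X)$ contains a pair of noncommuting idempotents, a contradiction. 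This leaves a finite Clifford semigroup $X=\bigcup_{e\in L_n}H_e$ with each maximal subgroup $H_e\in\{C_1,C_2,C_3,C_4,C_2\times C_2\}$, and a finite case analysis on $n$ and on the connecting homomorphisms $H_e\to H_f$ (for $e\ge f$ in $L_n$) singles out precisely the semigroups in the list: the disjoint-ordered-union examples $L_1\sqcup C_2$ and $C_2\sqcup L_n$ arise when the only nontrivial maximal subgroup sits at the top of the chain, and $L_2\times C_2$ when the chain has length two with the connecting map an isomorphism.

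The main obstacle is this finite classification, and within it the step that cuts $X$ down to such a restricted form. One must rule out infinitely many a priori admissible finite Clifford semigroups over $L_n$ --- those that are ``too large'' in a precise sense: a chain $L_n$ too long while carrying nontrivial groups below the top, a group factor of order $\ge 5$, or a nonlinear semilattice of idempotents --- by exhibiting in each case an explicit pair of idempotents of $\lambda(X)$ that fail to commute, or an explicit element of $\lambda(X)$ that is not regular in $N_2(X)$. The sub-Clifford alternative must be treated separately, since being sub-Clifford is strictly weaker than being Clifford and does not by itself supply regularity.
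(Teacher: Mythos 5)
Your cycle of implications and the broad shape of $(3)\Rightarrow(4)$ match the paper, but two of your load-bearing steps are not just unproven --- one is false as stated, and the other is deferred rather than carried out.

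The permanence lemmas you propose for $(4)\Rightarrow(1)$ do not hold. The natural homomorphism $\lambda(X\times Y)\to\lambda(X)\times\lambda(Y)$ is far from injective (already $|\lambda(C_2\times C_2)|=12$ while $|\lambda(C_2)\times\lambda(C_2)|=4$), so commutativity or Cliffordness of the target gives no information about the source; and the statement itself fails: $\lambda(C_2)$ and $\lambda(C_2\times C_2)$ are commutative Clifford, yet $\lambda(C_2^3)$ contains non-commuting idempotents (this is exactly what the paper uses to exclude $C_2^3$ as a maximal subgroup). The $\sqcup$-permanence claim fails too: $\lambda(L_1)$ and $\lambda(C_3)$ are commutative Clifford, but $L_1\sqcup C_3$ is not on the list in (4) (when $|E|\ge 2$ the top group must have order $\le 2$), so $\lambda(L_1\sqcup C_3)$ is not a commutative Clifford semigroup. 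The paper instead computes $\lambda(L_2\times C_2)$, $\lambda(L_1\sqcup C_2)$ and $\lambda(C_2\sqcup L_n)$ directly (Propositions 4.1 and 4.3); there is no shortcut through the factors.

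In $(3)\Rightarrow(4)$ your outline is the right skeleton, but the steps you gesture at are the entire content. (a) Passing the hypothesis to $\lambda(E)$ is not justified: $X\setminus E$ need not be an ideal, so the localization device (Lemma 3.3/Corollary 3.4 of the paper) does not apply; the paper instead exhibits, for non-linear $E$, an explicit maximal linked system $\la\{x,y\},\{x,xy\},\{y,xy\}\ra$ that is neither regular in $\upsilon(X)$ nor compatible with sub-Cliffordness. (b) Your claim that the alternatives in (3) ``are exactly what makes $\lambda(X)$ inverse'' is circular at the point where you invoke it: sub-Clifford does not imply regular until finiteness of $X$ has been established, which is why the paper first proves $X$ inverse via Propositions 2.1 and 3.1 and only gets $\lambda(X)$ inverse in Claim 6.6. (c) The bound $|H|\le 4$ on maximal subgroups is the hardest step; it requires the argument with maximal invariant linked systems, Ellis' theorem and left ideals to reduce to $|H|\le 5$, $D_6$ or $C_2^3$, followed by explicit eliminations --- and note that $C_5$ is killed not by non-commuting idempotents but by a non-regular element $\Theta$ whose non-regularity must be propagated from $\lambda(H_e)$ to $\lambda(X)$ via the localization corollary. (d) The final reduction over the chain $L_n$ (trivial groups in the interior, order $\le 2$ at the ends, the $L_2\times C_2$ case when the connecting map is an isomorphism) again rests on explicit non-regular linked systems and non-commuting idempotent pairs (Claims 6.7--6.11). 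You correctly identify all of this as ``the main obstacle,'' but identifying it is not the same as overcoming it.
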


\begin{theorem}\label{t1f} For a semigroup $X$ and its semigroup of filters $\varphi(X)$ the following conditions are
equivalent:
\begin{enumerate}
\item[{\rm(1)}] $\varphi(X)$ is a commutative Clifford semigroup;
\item[{\rm(2)}] $\varphi(X)$ is an inverse semigroup;
\item[{\rm(3)}] the idempotents of the
semigroup $\varphi(X)$ commute and $\varphi(X)$ is sub-Clifford or
regular in $N_2(X)$;
\item[{\rm(4)}] $X$ is isomorphic to one of the
semigroups: $C_2$, $L_n$ or $L_n\sqcup C_2$ for some $n\in\w$.
\end{enumerate}
\end{theorem}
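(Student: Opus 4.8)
The plan is to prove the cycle $(1)\Rightarrow(2)\Rightarrow(3)\Rightarrow(4)\Rightarrow(1)$. The implications $(1)\Rightarrow(2)$ and $(2)\Rightarrow(3)$ are immediate from facts recalled in the introduction: a commutative Clifford semigroup is inverse; an inverse semigroup is regular with commuting idempotents; and a semigroup that is regular in itself is regular in every larger semigroup, in particular in $N_2(X)\supseteq\varphi(X)$. Throughout I use that $X$ embeds into $\varphi(X)$ via $x\mapsto\la x\ra$, that for a subsemigroup $X'\subseteq X$ the induced map $\varphi(X')\to\varphi(X)$ is an embedding of semigroups, and that if $X'$ is moreover a retract of $X$ then $\varphi(X')$ is a retract of $\varphi(X)$ (and $N_2(X')$ of $N_2(X)$), so that each of the three properties occurring in (3) descends from $\varphi(X)$ to such a $\varphi(X')$.

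For $(4)\Rightarrow(1)$ I would treat the three families. If $X=L_n$, then $\upsilon(L_n)$ is a semilattice by \cite{BGs} (as $L_n$ is a finite linear semilattice), hence so is its subsemigroup $\varphi(L_n)$, which is therefore commutative Clifford. If $X=C_2$, a direct inspection shows that $\varphi(C_2)$ consists of the two principal ultrafilters, forming a copy of $C_2$, together with the least filter $\{C_2\}$, which is a two-sided zero; thus $\varphi(C_2)\cong L_1\sqcup C_2$, a commutative Clifford semigroup. If $X=L_n\sqcup C_2$, I would analyse the filters on $X$ directly: since $L_n$ is an ideal of $X$, the filters split according to whether they have a member contained in $L_n$ or are supported on $C_2$, the product respects this splitting, and $\varphi(L_n\sqcup C_2)$ is exhibited as a disjoint ordered union assembled from the semilattice $\varphi(L_n)$ and the commutative Clifford semigroup $\varphi(C_2)$, hence is commutative Clifford; the only routine part here is the filter bookkeeping.

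The core of the theorem is $(3)\Rightarrow(4)$. Assuming (3), I would first show that $X$ is a finite commutative Clifford semigroup. Restriction along $X\hookrightarrow\varphi(X)$ forces the idempotents of $X$ to commute; that $X$ is in addition commutative, regular, and a union of groups follows because a non-commuting pair, a non-regular element, or a failure of the Clifford identity inside $X$ would, via $\varphi(X')\hookrightarrow\varphi(X)$ together with the descent of (3) along retractions, produce a violation of (3) in $\varphi(X)$; and $X$ must be finite, since an infinite group occurring in $X$ makes $\varphi(X)$ violate (3) by the results on extensions of groups in \cite{BG10}, while an infinite semilattice of idempotents already gives non-commuting idempotents in $\varphi(X)$ --- for instance the filter semigroup of the infinite linear semilattice $(\w,\min)$ has two non-commuting idempotent filters, the Fr\'echet filter and the filter generated by the set of even numbers, whose two products (taken in the two orders) differ: one has only cofinite members, the other does not. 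Thus $X=\bigsqcup_{y\in Y}G_y$ is a finite semilattice $Y$ of finite abelian groups $G_y$ with the Clifford multiplication.

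It then remains to pin down $Y$ and the $G_y$. One shows $Y$ is linear: otherwise $Y$ has incomparable idempotents $a,b$, and inside the subsemigroup $\bigsqcup_{y\in\{a\wedge b,a,b\}}G_y$ of $X$ the principal filter $\{A:\{a,b\}\subseteq A\}$ has coinciding square and cube but is different from them --- so $\varphi(X)$ is not sub-Clifford --- and a companion computation shows the same filter is not regular in $N_2(X)$, so neither disjunct of (3) can hold. One further shows that the only nontrivial group that may occur among the $G_y$ is $C_2$, and only at the top of $Y$: each alternative --- a factor $C_3$, $C_4$, or $C_2\times C_2$; a copy of $L_1\sqcup C_2$ sitting in a non-top slot; or $L_2\times C_2$ --- occurs in $X$ and has a filter semigroup violating (3). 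Assembling these constraints leaves exactly $X\cong L_n$ (all $G_y$ trivial) or $X\cong L_n\sqcup C_2$ (top group $C_2$, with $n=0$ giving $C_2$), i.e.\ the list in (4). The main obstacle I expect to be precisely this package of exclusions: for each forbidden configuration one must exhibit an explicit filter defeating \emph{both} alternatives of (3) at once --- neither sub-Clifford nor regular in $N_2(X)$, or with non-commuting idempotents --- and, since ``regular in $N_2(X)$'' (unlike ``commuting idempotents'' and ``sub-Clifford'') does not descend along arbitrary subsemigroups but only along retracts, the regular branch in the non-linear and nontrivial-group cases needs either an honest retraction or a tailored direct computation; organizing all of this around a short list of minimal forbidden configurations is where the real work lies.
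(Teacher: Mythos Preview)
Your overall strategy matches the paper's: establish the cycle $(4)\Rightarrow(1)\Rightarrow(2)\Rightarrow(3)\Rightarrow(4)$, with the substance in $(3)\Rightarrow(4)$ by first reducing $X$ to a finite commutative Clifford semigroup over a linear idempotent semilattice and then constraining the maximal subgroups. The genuine gap is in that last step. You propose to force $|G_y|\le 2$ by excluding ``a factor $C_3$, $C_4$, or $C_2\times C_2$'', but this list is not exhaustive: a maximal subgroup isomorphic to $C_p$ for a prime $p\ge 5$ contains none of those three groups, so your case analysis does not rule it out. The paper avoids any case split here: for \emph{any} subgroup $H$ with $|H|>2$ it takes $F\subset H$ with $|H\setminus F|=1$ and shows that the single filter $\mathcal F=\la F\ra$ defeats both disjuncts of (3) at once, since $\mathcal F\ne\la H\ra=\mathcal F^2=\mathcal F^3$ and a short linked-system computation (the putative witnesses in $\mathcal F=\mathcal F*\mathcal A*\mathcal F$ are forced to be the pairwise disjoint singletons $\{x^{-1}\}$, $x\in F$) shows $\mathcal F$ is not regular in $N_2(X)$. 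Likewise the paper disposes of ``nontrivial group in a non-top slot'' with one filter $\la\{a,e_{i+1}\}\ra$, so your separate exclusions of $L_1\sqcup C_2$-in-a-non-top-slot and of $L_2\times C_2$ collapse into a single argument.

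Two smaller corrections. Your description of $\varphi(L_n\sqcup C_2)$ as a disjoint ordered union of $\varphi(L_n)$ and $\varphi(C_2)$ is not the right structure: the filter $\la\{0,1\}\ra$ with $0\in L_n$ and $1\in C_2$ lies in neither piece, and products of $L_n$-supported filters with other filters do not simply return the first factor. The paper instead identifies $\varphi(L_n\sqcup C_2)$ with the reduced product $\varphi(L_{n+1})\times_{\varphi(L_n)}\varphi(C_2)$ (Proposition~\ref{p4.8}), from which commutative Boolean, hence Clifford, is immediate. And the paper does not deduce ``$X$ is Clifford'' directly from a violation-of-(3) argument as you sketch; it first shows $X$ is inverse with $E$ finite and linear (finiteness via $\beta(X)\subset\varphi(X)$ and Proposition~\ref{p2.1} rather than explicit filters --- your Fr\'echet/even example is correct for $(\w,\min)$, but an arbitrary infinite linear $E$ still needs a Ramsey-type step to extract such a chain), and then invokes \cite[Theorem~7.5]{CP2} to conclude that $X$ is Clifford.
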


\begin{theorem}\label{t1n} For a semigroup $X$ and its semigroup of linked upfamilies  $N_2(X)$ the following conditions are
equivalent:
\begin{enumerate}
\item[{\rm(1)}] $N_2(X)$ is a commutative Clifford semigroup;
\item[{\rm(2)}] $N_2(X)$ is an inverse semigroup;
\item[{\rm(3)}] the idempotents of the semigroup
$N_2(X)$ commute and $N_2(X)$ is sub-Clifford or regular;
\item[{\rm(4)}] $X$ is isomorphic to $C_2$ or  $L_n$ for some $n\in\w$.
\end{enumerate}
\end{theorem}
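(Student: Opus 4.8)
The plan is to prove the cyclic chain of implications $(1)\Rightarrow(2)\Rightarrow(3)\Rightarrow(4)\Rightarrow(1)$, with essentially all of the work concentrated in $(3)\Rightarrow(4)$. The implications $(1)\Rightarrow(2)$ and $(2)\Rightarrow(3)$ are immediate from facts quoted in the introduction: a commutative Clifford semigroup is inverse, and by \cite[II.1.2]{Pet} every inverse semigroup is regular and has commuting idempotents; since a regular semigroup is regular in itself, condition $(3)$ holds with ``$N_2(X)$ is regular''.

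For $(3)\Rightarrow(4)$ I would exploit that $\varphi(X)$ is a \emph{subsemigroup} of $N_2(X)$. Condition $(3)$ descends to $\varphi(X)$: its idempotents, being idempotents of $N_2(X)$, commute; the sub-Clifford implication ``$x^{n+1}=x^{m+1}\Rightarrow x^n=x^m$'' is inherited by every subsemigroup; and if $N_2(X)$ is regular then trivially each element of $\varphi(X)$ is regular in $N_2(X)$. Hence $\varphi(X)$ satisfies condition~(3) of Theorem~\ref{t1f}, so $X$ is isomorphic to $C_2$, $L_n$, or $L_n\sqcup C_2$ for some $n\in\w$, and it remains to rule out $L_n\sqcup C_2$ for $n\ge1$. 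For this, note that the least element $0$ of the chain $L_n$ is a zero of $L_n\sqcup C_2$, so $\{0\}\cup C_2$ is a subsemigroup of $L_n\sqcup C_2$ isomorphic to $L_1\sqcup C_2$ (the group $C_2$ with an adjoined zero); since $N_2$ turns injective homomorphisms into injective homomorphisms, $N_2(L_1\sqcup C_2)$ embeds as a subsemigroup of $N_2(L_n\sqcup C_2)$, so it suffices to show that $N_2(L_1\sqcup C_2)$ has two non-commuting idempotents. Writing $L_1\sqcup C_2=\{0,e,g\}$ with identity $e$, $g^2=e$ and zero $0$, a direct computation with the extension operation shows that the linked upfamilies $\la\{0,e\},\{0,g\}\ra$ and $\la\{e,g\}\ra$ are both idempotent, while $\la\{0,e\},\{0,g\}\ra*\la\{e,g\}\ra=\la\{0,e,g\}\ra$ and $\la\{e,g\}\ra*\la\{0,e\},\{0,g\}\ra=\la\{0,e\},\{0,g\}\ra$; these two upfamilies therefore do not commute, contradicting~(3). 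This leaves only $X\cong C_2$ or $X\cong L_n$.

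Finally, for $(4)\Rightarrow(1)$: if $X=L_n$, then $X$ is a finite linear semilattice, so by the result of \cite{BGs} quoted in the introduction the semigroup $\upsilon(X)$, and hence its subsemigroup $N_2(X)$, is a semilattice, which is a commutative Clifford semigroup. If $X=C_2=\{e,g\}$ with $g^2=e$, a short computation gives $N_2(C_2)=\{\la\{e,g\}\ra,\la e\ra,\la g\ra\}$, where $\la\{e,g\}\ra$ is a two-sided zero and $\{\la e\ra,\la g\ra\}$ is a copy of $C_2$; thus $N_2(C_2)$ is $C_2$ with an adjoined zero, again a commutative Clifford semigroup. I expect the main obstacle to be the idempotent computation inside the $11$-element semigroup $N_2(L_1\sqcup C_2)$ in the step $(3)\Rightarrow(4)$: one must verify both that the two candidate upfamilies are idempotent in $N_2(L_1\sqcup C_2)$ and that their two products disagree.
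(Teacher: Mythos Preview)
Your proof is correct and follows essentially the same architecture as the paper's: reduce via the inclusion $\varphi(X)\subset N_2(X)$ to the list in Theorem~\ref{t1f}, then eliminate the extra case by exhibiting non-commuting idempotents in $N_2(L_1\sqcup C_2)$. There are only two cosmetic differences worth noting. First, the paper also invokes Theorem~\ref{t1l} (for $\lambda(X)\subset N_2(X)$) in addition to Theorem~\ref{t1f}; intersecting the two lists leaves only $C_2$, $L_n$, and $L_1\sqcup C_2$, so the paper avoids your embedding step reducing $L_n\sqcup C_2$ to $L_1\sqcup C_2$---but your embedding argument is perfectly valid and arguably cleaner, since it appeals to one theorem instead of two. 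Second, the paper's non-commuting idempotent pair is $\Delta=\la\{0,e\},\{0,g\},\{e,g\}\ra$ and $\F=\la\{0,e,g\}\ra$ (with $\Delta*\F=\F\ne\la\{0,e\},\{0,g\}\ra=\F*\Delta$), whereas you use $\la\{0,e\},\{0,g\}\ra$ and $\la\{e,g\}\ra$; both computations check out.
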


\begin{theorem}\label{t1u} For a semigroup $X$ and its semigroup of upfamilies $\upsilon(X)$ the following conditions are equivalent:
\begin{enumerate}
\item[{\rm(1)}] $\upsilon(X)$ is a finite semilattice;
\item[{\rm(2)}] $\upsilon(X)$ is an inverse semigroup;
\item[{\rm(3)}] the idempotents of the semigroup
$\upsilon(X)$ commute and $\upsilon(X)$ is sub-Clifford or
regular;
\item[{\rm(4)}] $X$ is a finite linear semilattice, isomorphic to $L_n$ for some $n\in\w$.
\end{enumerate}
\end{theorem}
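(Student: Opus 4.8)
The plan is to establish the cycle of implications $(4)\Rightarrow(1)\Rightarrow(2)\Rightarrow(3)\Rightarrow(4)$, since $(1)\Rightarrow(2)\Rightarrow(3)$ are essentially formal: a finite semilattice is a commutative Clifford inverse semigroup, hence inverse, and an inverse semigroup has commuting idempotents and is regular (indeed Clifford, since each element of a finite inverse semigroup lies in a subgroup only when $xx^{-1}=x^{-1}x$, but in any case regularity is immediate). So the substance lies in $(4)\Rightarrow(1)$ and in $(3)\Rightarrow(4)$. For $(4)\Rightarrow(1)$ I would invoke the result quoted from \cite{BGs} in the introduction: for a semigroup $X$, $\upsilon(X)$ is a semilattice iff $\lambda(X)$ is a semilattice iff $X$ is a finite linear semilattice; thus if $X\cong L_n$ then $\upsilon(X)$ is a semilattice, and since $X$ is finite, $\upsilon(X)=\mathcal P(\mathcal P(X))\cap\upsilon(X)$ is finite, giving (1). (If one prefers a self-contained argument, one can check directly that for $X=L_n$ the operation $\A*\B=\la\bigcup_{a\in A}a*B_a\ra$ reduces, because $a*b=\min\{a,b\}$, to an idempotent commutative operation on the finite set $\upsilon(L_n)$.)

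The heart of the proof is $(3)\Rightarrow(4)$. Assume the idempotents of $\upsilon(X)$ commute and $\upsilon(X)$ is sub-Clifford or regular. First I would show $X$ must be finite: if $X$ is infinite, then $\upsilon(X)$ is an infinite compact right-topological semigroup, and I would exhibit a non-regular (or non-sub-Clifford) element — for instance, using a free ultrafilter or a suitable principal upfamily, one produces an element $\A$ with $\A^2=\A^3\ne\A^2\cdot\text{(anything)}$ witnessing failure of sub-Cliffordness, contradicting the hypothesis. (This step will likely reuse machinery from \cite{BGN} or the companion papers; I would cite the relevant lemma rather than reprove it.) Once $X$ is finite, $\upsilon(X)\supset N_2(X)$, so by Theorem~\ref{t1n} the hypothesis on $\upsilon(X)$ forces $N_2(X)$ to satisfy its condition (3), whence $X\cong C_2$ or $X\cong L_n$. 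It then remains to exclude $X\cong C_2$: I would compute $\upsilon(C_2)$ explicitly — it is a finite semigroup on the $2^{2^2}-2=14$ nonempty-closed upfamilies, or more conveniently locate within it two idempotents that fail to commute, or an element that is not regular — thereby showing $\upsilon(C_2)$ is \emph{not} inverse and \emph{not} sub-Clifford. This pins down $X\cong L_n$, completing $(3)\Rightarrow(4)$ and closing the cycle.

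The main obstacle I anticipate is the finiteness step and the explicit analysis of $\upsilon(C_2)$: unlike $\lambda$, $\varphi$, or $N_2$, the full upfamily extension $\upsilon(X)$ is large (of cardinality $2^{2^{|X|}}$ up to boundary cases) and its multiplication, while right-topological, is genuinely noncommutative and non-idempotent even for the smallest nontrivial $X$. The cleanest route is probably to avoid enumerating $\upsilon(C_2)$ entirely and instead argue structurally: observe that $\upsilon(C_2)$ contains an isomorphic copy of $\upsilon(C_2)$'s ``group part'' — concretely, the two principal upfamilies $\la 1\ra,\la -1\ra$ generate a copy of $C_2$ inside $\upsilon(C_2)$, and one then finds an idempotent $e\in\upsilon(C_2)\setminus X$ (say $e=\la\{1\},\{-1\}\ra$ or a similar linked upfamily) together with the nontrivial group element $g=\la -1\ra$ such that $ge\ne eg$, directly violating the commuting-idempotents requirement once one checks $ge$ and $eg$ are the relevant idempotents. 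Verifying such a concrete non-commuting pair is a short finite computation and is the natural candidate for the one place where real calculation is unavoidable; everything else reduces to the cited theorems and the formal properties of inverse/Clifford semigroups recalled in the introduction.
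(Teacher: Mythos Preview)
Your cycle $(4)\Rightarrow(1)\Rightarrow(2)\Rightarrow(3)\Rightarrow(4)$ and the treatment of the easy implications match the paper. The argument for $(3)\Rightarrow(4)$, however, has two real gaps.

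First, the reduction to Theorem~\ref{t1n} does not go through. You claim the hypothesis on $\upsilon(X)$ forces $N_2(X)$ to satisfy condition~(3) of Theorem~\ref{t1n}. Commutation of idempotents and the sub-Clifford property do descend to the subsemigroup $N_2(X)\subset\upsilon(X)$, but regularity does not: if $\A\in N_2(X)$ is regular in $\upsilon(X)$, the witness $\B$ with $\A=\A*\B*\A$ lies only in $\upsilon(X)$ and need not be linked. So in the ``regular'' branch of the disjunction you cannot invoke Theorem~\ref{t1n} as stated.

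Second, your exclusion of $C_2$ is incorrect as written. The element $g=\langle-1\rangle$ is not an idempotent ($g^2=\langle 1\rangle\ne g$), so $ge\neq eg$ would not by itself contradict the commuting-idempotents hypothesis; and in any case for your suggested $e=\langle\{1\},\{-1\}\rangle$ one computes $ge=eg=e$. (Incidentally, $\upsilon(C_2)$ has only four nonempty upfamilies, not fourteen.) The pair that actually works---and that the paper uses for an arbitrary nontrivial subgroup $H\subset X$---consists of the two genuine idempotents $\F=\langle H\rangle$ and $\U=\{A\subset H:A\neq\emptyset\}$ in $\upsilon(H)\subset\upsilon(X)$; a direct computation gives $\F*\U=\U\neq\F=\U*\F$.

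The paper sidesteps both issues by arguing directly in $\upsilon(X)$ rather than reducing to $N_2(X)$: it reuses the reasoning of Claims~\ref{cl6.2}--\ref{cl6.4} (applied verbatim with $\upsilon(X)$ in place of $\lambda(X)$) to show that $X$ is inverse Clifford with finite linear idempotent semilattice $E$, and then applies the pair $\F,\U$ above to force every maximal subgroup of $X$ to be trivial, whence $X=E\cong L_n$. No separate finiteness step and no appeal to Theorem~\ref{t1n} are needed.
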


Surprisingly, the following problem remains open.

\begin{problem} Characterize semigroups $X$ whose Stone-\v Cech extension $\beta(X)$ is an inverse semigroup. {\rm (Such semigroups have finite linear and finite cyclic subsemigroups; see Proposition~\ref{p2.1}.)}
\end{problem}

Theorems~\ref{t1l}, \ref{t1f}, \ref{t1n}, and \ref{t1u} will be proved
in Sections~\ref{s:t1l}, \ref{s:t1f}, \ref{s:t1n}, and \ref{s:t1u}, respectively.

\section{Commutativity in the Stone-\v Cech extension}

In this section we establish some properties of semigroups whose
Stone-\v Cech extension has commuting idempotents. Let us recall
that a semigroup $S$ is {\em cyclic} if $S=\{x^n:n\in\IN\}$
for some element $x\in S$, called the {\em generator} of $S$.

\begin{proposition}\label{p2.1} If for a semigroup $X$ all idempotents of the Stone-\v Cech extension $\beta(X)$ commute, then all cyclic subsemigroups and all linear subsemigroups of $X$ are finite.
\end{proposition}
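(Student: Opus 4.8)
The plan is to prove the contrapositive in two parts: if $X$ contains an infinite linear subsemigroup, then $\beta(X)$ has non-commuting idempotents; and if $X$ contains an infinite cyclic subsemigroup, then again $\beta(X)$ has non-commuting idempotents. Since any map $f\colon X\to Y$ induces $\upsilon f\colon \upsilon(X)\to\upsilon(Y)$ carrying $\beta(X)$ into $\beta(Y)$, and since for an injective $f$ this $\upsilon f$ is a topological embedding that respects the semigroup operation, it suffices to produce two non-commuting idempotents inside $\beta(L)$ for a single infinite linear semigroup $L$, and inside $\beta(C)$ for a single infinite cyclic semigroup $C$; these then transport back into $\beta(X)$ via the inclusion of the relevant subsemigroup. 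So the task reduces to two concrete special cases.

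For the linear case, I would take $L$ to be an infinite linear semigroup, i.e. a semigroup with $xy\in\{x,y\}$ for all $x,y$; such a semigroup carries a natural partial order and its Stone--Čech extension $\beta(L)$ consists of ultrafilters, where the extended operation is the usual one on $\beta L$. The classical fact here is that in $\beta\IN$ (with any semigroup structure coming from an infinite, say left-zero-like or min-like, operation) one can find idempotent ultrafilters; more to the point, for the left-zero semigroup on a countable set every ultrafilter is idempotent, while the extended operation is $\A\cdot\B=\A$, which is already commutative — so the genuinely delicate instance is a linearly ordered chain. The key step is: choose $L$ to be the semigroup $\{a_n : n\in\w\}\cup\{b_n:n\in\w\}$, or more simply an ordered union exhibiting two ``infinite directions'', and locate two idempotents $\mathcal P,\mathcal Q\in\beta(L)$ with $\mathcal P\mathcal Q\ne\mathcal Q\mathcal P$ by arranging that $\mathcal P$ ``sees'' one cofinal branch and $\mathcal Q$ another. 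The cleanest route is to invoke the known structure theory: an infinite chain maps onto an infinite subset of $(\w,\min)$ or onto $(\w,\max)$, and $\beta(\w,\max)$ is a well-understood right-topological semigroup in which every element of $\w^*$ is idempotent but the product of two elements of $\w^*$ need not commute. I would check directly, using the definition $\A*\B=\la\bigcup_{a\in A}a*B_a\ra$, that two non-principal ultrafilters $p,q$ on $\w$ with $p\ne q$ satisfy $p*_{\max}q = p\cup q$-type expression differing from $q*_{\max}p$; this is a short ultrafilter computation.

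For the cyclic case, let $C$ be an infinite cyclic semigroup with generator $t$; then $C\cong(\IN,+)$, and $\beta(C)\cong\beta\IN$ with its usual additive structure $(\beta\IN,+)$. Here the classical theorem of Hindman--Strauss (existence of idempotents in every compact right-topological semigroup, applied to $\w^*$) gives idempotents $p=p+p$ in $\w^*$, and a further classical result states that $(\beta\IN,+)$ is \emph{not} commutative — indeed $p+q\ne q+p$ for any two distinct idempotents, or one exhibits explicitly $p+q\ne q+p$. Since all we need is the failure of commutativity among idempotents, I would cite/use the standard fact that for any idempotent $p\in\w^*$ there is an idempotent $q\in\w^*$ with $p+q\neq q+p$ (for instance, pick $q$ with $q+q=q$ lying in a different minimal left ideal, or use that the center of $(\beta\IN,+)$ is exactly $\IN$). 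Transporting back: the inclusion $C\hookrightarrow X$ of the infinite cyclic subsemigroup induces an embedding $\beta(C)\hookrightarrow\beta(X)$ of semigroups, so the two non-commuting idempotents of $\beta(C)$ become non-commuting idempotents of $\beta(X)$, contradicting the hypothesis.

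The main obstacle I anticipate is the linear-semilattice case, because unlike the cyclic case there is no off-the-shelf ``$\beta\IN$ is noncommutative'' statement — one must identify the right model infinite linear semigroup and carry out the ultrafilter computation showing two idempotents fail to commute, being careful that the extended operation $*$ on $\upsilon$ restricted to $\beta$ agrees with the usual Stone--Čech operation (which it does, by \cite{G1}). Once the correct chain (or ordered union of two infinite chains with opposite orientations) is fixed, verifying idempotency of the relevant ultrafilters and the inequality $\mathcal P*\mathcal Q\neq\mathcal Q*\mathcal P$ is a routine but slightly technical finite/ultrafilter manipulation, which I would present compactly rather than in full detail.
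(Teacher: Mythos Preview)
Your overall architecture matches the paper's: argue by contradiction, embed the infinite cyclic or linear subsemigroup into $X$, and then exhibit two non-commuting idempotents in the Stone--\v Cech extension of the model semigroup. For the cyclic part you are on exactly the same track as the paper, which simply observes $\{x^n\}_{n\in\IN}\cong(\IN,+)$ and cites Hindman--Strauss for non-commuting idempotents in $\beta(\IN)$.

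The linear part, however, has a genuine error and a genuine gap. The error: you write that for a left-zero semigroup the extended operation $\A\cdot\B=\A$ is ``already commutative''. It is not; it is left-zero on $\beta(L)$, so any two distinct ultrafilters $\A\ne\B$ satisfy $\A\B=\A\ne\B=\B\A$, and since every ultrafilter is idempotent this is precisely the trivial case delivering non-commuting idempotents immediately. The gap: you never explain why an \emph{arbitrary} infinite linear subsemigroup admits a reduction to a model like $(\w,\min)$ or a left/right-zero semigroup. A linear semigroup only satisfies $xy\in\{x,y\}$; for an injective sequence $(x_n)$ the products $x_nx_m$ for $n<m$ may fall either way with no global pattern. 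The paper's key device here is Ramsey's theorem: colour the pair $(n,m)$ with $n<m$ by whether $x_nx_m=x_n$ or $x_nx_m=x_m$, extract an infinite monochromatic $\Omega$, and pass to $Z=\{x_n:n\in\Omega\}$. On $Z$ every element of $\beta(Z)$ is idempotent (by \cite{BGs}), and for two distinct free ultrafilters $\U\ne\V$ one computes directly $\U*\V=\U$, $\V*\U=\V$ in colour $0$ (and symmetrically in colour $1$). Your sketch gestures at such a reduction (``an infinite chain maps onto\ldots'') but neither names nor justifies it, and the proposed computation ``$p*_{\max}q=p\cup q$-type expression'' is not correct as stated; once Ramsey has homogenised the sequence, the product of two free ultrafilters is simply one of the two factors, which is what makes the argument work.
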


\begin{proof} First we show that each element $x\in X$ generates a finite cyclic subsemigroup $\{x^n\}_{n\in\IN}$. If $\{x^n\}_{n\in\IN}$ is infinite, then it is isomorphic to the semigroup $(\IN,+)$. Then the Stone-\v Cech extension $\beta(X)$ contains a subsemigroup isomorphic to the Stone-\v Cech extension $\beta(\IN)$ of the semigroup $(\IN,+)$. By Theorem 6.9 of \cite{HS}, the semigroup $\beta(\IN)$ contains $2^{\mathfrak c}$ non-commuting idempotents and so does the semigroup $\beta(X)$ which is forbidden by our assumption. So, the cyclic subsemigroup $\{x^n\}_{n\in\IN}$ is finite.

Next, assume that $X$ contains an infinite linear subsemigroup
$L$. Then $xy\in\{x,y\}$ for any elements $x,y\in L$. Choose any
injective sequence  $\{x_n\}_{n\in\w}$ in $L$ and define a
2-coloring $\chi:[\w]^2\to \{0,1\}$ of the set
$[\w]^2=\{(n,m)\in\w^2:n<m\}$ letting
$$\chi(n,m)=
\begin{cases}
0&\mbox{if $x_nx_m=x_n$}\\
1&\mbox{if $x_nx_m=x_m$}.
\end{cases}
$$ By Ramsey's Theorem \cite{Ramsey} (see also \cite[Theorem 5]{GRS}), there is an infinite subset $\Omega\subset\w$ and a color $k\in\{0,1\}$ such that $\chi(n,m)=k$ for any pair $(n,m)\in[\w]^2\cap\Omega^2$. Consider the infinite linear subsemigroup $Z=\{x_n\}_{n\in\Omega}$ of $X$. By Theorem 1.1 of \cite{BGs}, each element of the semigroup $\beta(Z)$ is an idempotent. We claim that any two distinct free ultrafilters $\U,\V\in\beta(Z)$ do not commute (which is forbidden by our assumption).
If the color $k=0$, then $x_nx_m=x_n$ for any numbers $n<m$ in
$\Omega$, which implies that $\U*\V=\U\ne \V=\V*\U$. If $k=1$,
then $x_nx_m=x_m$ for any numbers $n<m$ in $\Omega$ and then
$\U*\V=\V\ne\U=\V*\U$. \end{proof}

\section{The regularity of extensions of semigroups}

In this section we shall prove some results related to the
regularity of semigroups. Let us recall that an element $x\in S$ is {\em
regular} in a semigroup $S$ if $x\in xSx$.

\begin{proposition}\label{p3.1} Let $X$ be a semigroup. An element $x\in X$ is regular in $X$ if and only if the ultrafilter $\la x\ra$ is regular in the semigroup $\upsilon(X)$.
\end{proposition}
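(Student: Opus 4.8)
The plan is to prove both implications directly from the definition of the extended operation and the identification $\langle x\rangle=\{A\subset X:x\in A\}$. The key computational fact I would establish first is a formula for products of the form $\langle x\rangle*\mathcal A*\langle x\rangle$ for an arbitrary upfamily $\mathcal A\in\upsilon(X)$. Using the definition $\mathcal A*\mathcal B=\big\langle\bigcup_{a\in A}a*B_a:A\in\mathcal A,\ \{B_a\}_{a\in A}\subset\mathcal B\big\rangle$, one checks that $\langle x\rangle*\mathcal A=\langle x*A:A\in\mathcal A\rangle$, since the only set in the upfamily $\langle x\rangle$ that matters is the singleton-generated one and the indexing reduces to a single choice $B_x\in\mathcal A$. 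Iterating, $\langle x\rangle*\mathcal A*\langle x\rangle=\langle x*A*x:A\in\mathcal A\rangle$, and for the principal ultrafilter $\mathcal A=\langle y\rangle$ this gives $\langle x\rangle*\langle y\rangle*\langle x\rangle=\langle xyx\rangle$. More generally, the set of elements ``reachable'' from $x$ by sandwiching is exactly $xXx=\{xyx:y\in X\}$.

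For the forward direction, suppose $x$ is regular in $X$, so $x\in xXx$, i.e. $x=xyx$ for some $y\in X$. Then $\langle x\rangle*\langle y\rangle*\langle x\rangle=\langle xyx\rangle=\langle x\rangle$, which witnesses that $\langle x\rangle$ is regular in $\upsilon(X)$ (taking the witness to be $\langle y\rangle\in\upsilon(X)$).

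For the converse, suppose $\langle x\rangle$ is regular in $\upsilon(X)$, so there is an upfamily $\mathcal A\in\upsilon(X)$ with $\langle x\rangle*\mathcal A*\langle x\rangle=\langle x\rangle$. By the formula above, $\langle x\rangle*\mathcal A*\langle x\rangle=\langle xAx:A\in\mathcal A\rangle$, so the equality with $\langle x\rangle=\{B\subset X:x\in B\}$ forces $\{x\}\in\langle xAx:A\in\mathcal A\rangle$, meaning there is some $A\in\mathcal A$ with $xAx\subset\{x\}$; since $A\ne\emptyset$ and $xax\in X$ for $a\in A$, we get $xax=x$ for any $a\in A$, so $x\in xXx$ and $x$ is regular in $X$. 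The same reasoning works verbatim with $\beta(X)$, $\lambda(X)$, $\varphi(X)$, $N_2(X)$ in place of $\upsilon(X)$, since $\langle x\rangle$ belongs to each of these subsemigroups and the witness $\langle y\rangle$ produced in the forward direction is the principal ultrafilter, which lies in all of them; this is worth recording as a remark since it is used in later sections.

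The main obstacle I anticipate is getting the reduction $\langle x\rangle*\mathcal A=\langle x*A:A\in\mathcal A\rangle$ exactly right from the somewhat involved definition of $*$ on $\upsilon(X)$ — one must be careful that the ``upfamily generated'' bracket $\langle\cdot\rangle$ does not introduce extra sets, and that choosing the singleton $\{x\}\in\langle x\rangle$ genuinely suffices (larger sets in $\langle x\rangle$ only yield larger, hence redundant, unions). Once that lemma-level identity is pinned down, everything else is a short direct argument.
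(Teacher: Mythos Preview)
Your proposal is correct and follows essentially the same route as the paper. The paper dispatches the forward direction as ``trivial'' and for the converse simply asserts that from $\langle x\rangle=\langle x\rangle*\mathcal F*\langle x\rangle$ one obtains some $F\in\mathcal F$ with $x\in xFx\subset xXx$; your explicit derivation of the identity $\langle x\rangle*\mathcal A*\langle x\rangle=\langle xAx:A\in\mathcal A\rangle$ is exactly the computation underlying that assertion, spelled out in full.
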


\begin{proof} The ``if'' part is trivial. To prove the ``only if'' part, assume that $\la x\ra$ is regular in $\upsilon(X)$ and find an upfamily $\F\in\upsilon(X)$ such that $\la x\ra=\la x\ra*\F*\la x\ra$. Then for some set $F\in\F$ we get $x\in xFx\subset xSx$, which means that $x$ is regular in $X$.
\end{proof}

\begin{corollary} A semigroup $X$ is inverse if and only if $X$ lies in some inverse semigroup $S\subset\upsilon(X)$.
\end{corollary}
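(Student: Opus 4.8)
The plan is to treat the two implications separately; essentially all of the content sits in Proposition~\ref{p3.1}, and the rest is bookkeeping about the embedding $x\mapsto\langle x\rangle$.

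For the ``only if'' part, if $X$ is inverse there is nothing to do: the identity inclusion identifies $X$ with a subsemigroup of $\upsilon(X)$ (the operation $*$ on $\upsilon(X)$ extends the operation of $X$), so we may take $S=X$ itself, which is an inverse semigroup with $X\subset S\subset\upsilon(X)$.

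For the ``if'' part, assume $X\subset S\subset\upsilon(X)$ with $S$ inverse. By the criterion recalled in the introduction (\cite[II.1.2]{Pet}), it is enough to verify that $X$ is regular and that its idempotents commute. For regularity, note that each element of the inverse semigroup $S$ is regular in $S$, hence regular in the larger semigroup $\upsilon(X)$; applying this to $\langle x\rangle$ for $x\in X$ and invoking Proposition~\ref{p3.1} shows that every $x\in X$ is regular in $X$. For commutativity of idempotents, observe first that the map $x\mapsto\langle x\rangle$ is an injective homomorphism of $X$ into $\upsilon(X)$ --- the equality $\langle x\rangle*\langle y\rangle=\langle xy\rangle$ follows immediately from the defining formula for $*$ by taking $A=\{x\}$ and $B_x=\{y\}$. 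Consequently this map sends idempotents of $X$ to idempotents of $S$, and since the idempotents of the inverse semigroup $S$ commute, for any idempotents $e,f\in X$ we get $\langle ef\rangle=\langle e\rangle*\langle f\rangle=\langle f\rangle*\langle e\rangle=\langle fe\rangle$, whence $ef=fe$ by injectivity. Thus $X$ is a regular semigroup with commuting idempotents and is therefore inverse.

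The argument is short, and the only genuine step --- recovering regularity of $x$ in $X$ from regularity of $\langle x\rangle$ in $\upsilon(X)$ --- has already been isolated in Proposition~\ref{p3.1}, so I do not anticipate a real obstacle. The one point to be careful about is that the algebraic conditions (being idempotent, and commuting) really do transfer between $X$ and its image in $\upsilon(X)$, which is guaranteed by the fact that $x\mapsto\langle x\rangle$ is an embedding of semigroups.
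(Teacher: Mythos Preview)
Your proof is correct and follows essentially the same route as the paper: the ``only if'' direction is handled by taking $S=X$, and for the ``if'' direction you use Proposition~\ref{p3.1} to get regularity of $X$ and the inclusion $X\subset S$ to get commuting idempotents. Your version is simply a bit more explicit about why the embedding $x\mapsto\langle x\rangle$ transfers idempotents and commutativity, which the paper leaves implicit.
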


\begin{proof} The ``only if'' part is trivial (just take $S=X$). To prove the ``if'' part, assume that a semigroup $X$ lies in some inverse subsemigroup $S\subset\upsilon(X)$. The inverse semigroup $S$ is regular and has commuting idempotents. Then the idempotents of the subsemigroup $X\subset S$ also commute. Each element $x\in X\subset S$ is regular in $S$ and hence is regular in $X$ by Proposition~\ref{p3.1}. Then the semigroup $X$ is inverse, being a regular semigroup with commuting idempotents; see \cite[II.1.2]{Pet}.
\end{proof}

Let us recall that a non-empty subset $I$ of a semigroup $X$ is
called an {\em ideal} in $X$ if $XI\cup IX\subset I$.

\begin{lemma}\label{l3.3} Let $X$ be a semigroup and $Z\subset X$ be a subsemigroup whose complement $X\setminus Z$ is an ideal in $X$. If for two upfamilies $\A\in\upsilon(Z)\subset\upsilon(X)$ and $\mathcal B\in\upsilon(X)$ we get $\A=\A*\mathcal B*\A$, then $\A=\A*{\mathcal B}_Z*\A$ for the upfamily $\mathcal B_Z=\{B\in\mathcal B:B\subset Z\}\in\upsilon(Z)$.
\end{lemma}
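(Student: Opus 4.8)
### Proof Plan for Lemma~\ref{l3.3}

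The plan is to unwind the defining formula for the product in $\upsilon(X)$ and exploit the ideal hypothesis to show that only subsets of $Z$ contribute to the relevant product. First I would recall that, by definition,
$$\A*\mathcal B*\A=\Big\la\bigcup_{a\in A}a*\Big(\bigcup_{b\in B_a}b*C_{a,b}\Big):A\in\A,\ \{B_a\}_{a\in A}\subset\mathcal B,\ \{C_{a,b}\}\subset\A\Big\ra,$$
and that the inclusion $\A*\mathcal B_Z*\A\subset\A*\mathcal B*\A=\A$ is immediate from monotonicity of the product, since $\mathcal B_Z\subset\mathcal B$ as upfamilies. So the real content is the reverse inclusion $\A\subset\A*\mathcal B_Z*\A$; equivalently, every set in $\A*\mathcal B*\A$ contains a set coming from a witness system drawn from $\mathcal B_Z$.

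The key observation is the following: since $A\in\A\in\upsilon(Z)$, we may assume $A\subset Z$, and since $C_{a,b}\in\A$ we may likewise assume $C_{a,b}\subset Z$ (replacing each by its intersection with $Z$, which still lies in $\A$ because $\A\in\upsilon(Z)$ means $\A$ has a base of subsets of $Z$). Now for $a\in A\subset Z$ and any $b$ with $a*b\in a*B_a$, if $b\notin Z$ then $a*b\in X\setminus Z$ because $X\setminus Z$ is an ideal, and then $(a*b)*C_{a,b}\subset X\setminus Z$ as well. Thus the portion of $\bigcup_{a\in A}a*(\bigcup_{b\in B_a}b*C_{a,b})$ coming from elements $b\notin Z$ lies entirely in $X\setminus Z$, whereas the portion coming from $b\in Z$ lies in $Z$ (here I use that $Z$ is a subsemigroup, so $a,b\in Z$ forces $a*b*c\in Z$ for $c\in C_{a,b}\subset Z$). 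Given a set $W\in\A*\mathcal B*\A$, pick a witness system $(A,\{B_a\},\{C_{a,b}\})$ with $A,C_{a,b}\subset Z$ whose associated union $U$ is contained in $W$. Then $U\cap Z$ equals the sub-union over those $b\in B_a\cap Z$. Since $U\subset W$ and $W$ need not lie in $Z$, I would intersect with $Z$: the set $W\cap Z\in\A$ (again using $\A\in\upsilon(Z)$, once we check $W\cap Z\ne\emptyset$, which holds because $U\cap Z\ne\emptyset$ — this needs an argument), and it contains $U\cap Z$, which is exactly the union built from the system $(A,\{B_a\cap Z\},\{C_{a,b}\})$. Replacing $B_a$ by $B_a\cap Z\in\mathcal B_Z$ (nonempty whenever it must contribute), we exhibit $W\cap Z$, hence $W$, as a member of $\A*\mathcal B_Z*\A$.

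The main obstacle I anticipate is the bookkeeping around nonemptiness: the definition of the product requires each witness set to be nonempty, and when I pass from $B_a$ to $B_a\cap Z$ I must ensure I am not forced to use an empty set. The clean way around this is to use the hypothesis $\A=\A*\mathcal B*\A$ more carefully: apply it with the specific set $W=Z\in\A$ to get a witness system whose union $U\subset Z$; then \emph{every} element $a*b*c$ of $U$ already lies in $Z$, which (since $a,c\in Z$ may be arranged and $X\setminus Z$ is an ideal) forces the relevant $b$'s to lie in $Z$, so that $B_a\cap Z\supset\{b\in B_a: a*b*C_{a,b}\cap Z\ne\emptyset\}$ is nonempty for each $a\in A$ that actually appears. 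Thus $U$ itself is already a union built from a system in $\mathcal B_Z$, giving $Z\in\A*\mathcal B_Z*\A$. Combined with the trivial inclusion $\A*\mathcal B_Z*\A\subset\A$ and the fact that $\A$ is the upfamily generated by its traces on $Z$, a short additional argument (upgrading "$Z\in\A*\mathcal B_Z*\A$" to "$\A\subset\A*\mathcal B_Z*\A$" using that $\A*\mathcal B_Z*\A$ is an upfamily containing $\A*\mathcal B_Z*\A\ni$ a base of $\A$) finishes the proof. I would double-check that the elementwise argument "$a\in Z$, $a*b\in Z\Rightarrow b$ plays no role outside $Z$" is used only in the direction permitted by the ideal hypothesis, namely $a\in Z$, $b\notin Z\Rightarrow a*b\notin Z$.
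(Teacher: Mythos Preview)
Your overall strategy matches the paper's, but the execution has two gaps.

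First, in your initial attempt you write ``Replacing $B_a$ by $B_a\cap Z\in\mathcal B_Z$''. This is not valid: $\mathcal B$ is an upfamily, hence closed under \emph{supersets}, not subsets, so from $B_a\in\mathcal B$ you cannot conclude $B_a\cap Z\in\mathcal B$, and therefore $B_a\cap Z$ need not lie in $\mathcal B_Z$. You seem to sense trouble here (you flag nonemptiness), but the real obstruction is membership in $\mathcal B$.

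Second, your ``clean way around this'' correctly shows that if you choose the target set $W=Z$, then the entire witness union lies in $Z$, which via the ideal hypothesis forces each $B_a\subset Z$ and hence $B_a\in\mathcal B_Z$. That yields $Z\in\A*\mathcal B_Z*\A$. But the promised ``short additional argument'' upgrading this to $\A\subset\A*\mathcal B_Z*\A$ does not exist: from $\A*\mathcal B_Z*\A\subset\A$ and $Z\in\A*\mathcal B_Z*\A$ you only get that $\A*\mathcal B_Z*\A$ contains all supersets of some particular $U\in\A$, not that it contains every member of $\A$.

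The fix is immediate, and it is exactly what the paper does: run your second argument not with $W=Z$ but with $W\cap Z$ for an \emph{arbitrary} $W\in\A$. Since $\A\in\upsilon(Z)$, the set $W\cap Z$ is nonempty and belongs to $\A=\A*\mathcal B*\A$, so it contains a witness union $U\subset W\cap Z\subset Z$; the ideal hypothesis then forces every $B_a$ in that witness system to satisfy $B_a\subset Z$, so $B_a\in\mathcal B_Z$ outright (no intersection needed), giving $W\cap Z\in\A*\mathcal B_Z*\A$ and hence $W\in\A*\mathcal B_Z*\A$. The paper organizes this by first passing through the intermediate product $\A*\mathcal B$, which keeps the bookkeeping cleaner, but the content is the same.
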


\begin{proof} It is clear that $\A*\mathcal B_Z*\A\subset \A*\mathcal B*\A=\A$. To prove the reverse inclusion, take any set $A\in\A$. It follows from $\A\in\upsilon(Z)\subset\upsilon(X)$ that $A\cap Z\in\A\subset(\A*\mathcal B)*\A$. So, we can find a set $C\in \A*\mathcal B$ and a family $\{A_c\}_{c\in C}\subset\A$ such that $\bigcup_{c\in C}c*A_c\subset A\cap Z$. For every $c\in C$ the inclusion $c*A_c\subset A\cap Z\subset Z$ implies $c\in Z$ (because $X\setminus Z$ is an ideal in $X$). So, $C\subset Z$. Since $C\in\A*\mathcal B$, there is a set $A\in\A$ and a family $\{B_a\}_{a\in A}\subset\mathcal B$ such that $\bigcup_{a\in A}a*B_a\subset C$. Since $X\setminus Z$ is an ideal in $X$, for every $a\in A$ the inclusion $a*B_a\subset C\subset Z$ implies $B_a\subset Z$ which means that $\{B_a\}_{a\in A}\subset\mathcal B_Z$ and hence $\A\subset\A*\mathcal B_Z*\A$.
\end{proof}

\begin{corollary}\label{c3.4} Let $X$ be a semigroup and $S\in\{\beta(X),\lambda(X),\varphi(X),  N_2(X), \upsilon(X)\}$ be one of its extensions. If the semigroup $S$ is regular, then for any subsemigroup $Z\subset X$ whose complement $X\setminus Z$ is an ideal in $X$ the semigroup $S\cap \upsilon(Z)$ is regular.
\end{corollary}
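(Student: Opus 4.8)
The plan is to combine Lemma~\ref{l3.3} with the elementary fact that for a subsemigroup $Z\subset X$ whose complement is an ideal the extensions $\beta(Z),\lambda(Z),\varphi(Z),N_2(Z),\upsilon(Z)$ embed into the corresponding extensions of $X$, so that $S\cap\upsilon(Z)$ is precisely the extension of $Z$ of the same type as $S$. Fix an upfamily $\A\in S\cap\upsilon(Z)$; I must produce $\mathcal C\in S\cap\upsilon(Z)$ with $\A=\A*\mathcal C*\A$. If $\A=\emptyset$ this is trivial (the empty upfamily satisfies $\emptyset*\mathcal C*\emptyset=\emptyset$ for every $\mathcal C$), so I assume $\A\ne\emptyset$. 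Since $S$ is regular, there is $\mathcal B\in S$ with $\A=\A*\mathcal B*\A$, and since $\A\in\upsilon(Z)$, Lemma~\ref{l3.3} applies and yields $\A=\A*\mathcal B_Z*\A$ for the upfamily $\mathcal B_Z=\{B\in\mathcal B:B\subset Z\}\in\upsilon(Z)$. Note that $\mathcal B_Z\ne\emptyset$ (otherwise $\A*\mathcal B_Z*\A=\emptyset\ne\A$), so in particular $Z\in\mathcal B$.

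The next step treats the cases $S\in\{\upsilon(X),\varphi(X),N_2(X),\beta(X)\}$, where I claim $\mathcal B_Z$ itself lies in $S\cap\upsilon(Z)$ and hence is the desired witness. This is vacuous for $S=\upsilon(X)$. If $\mathcal B$ is linked, then so is its subfamily $\mathcal B_Z$, giving $\mathcal B_Z\in N_2(Z)\subset N_2(X)$. If $\mathcal B$ is a filter, one checks directly that $\mathcal B_Z$ is a filter on $Z$ (it is upward closed in $\mathcal P(Z)$ and closed under finite intersections), so $\mathcal B_Z\in\varphi(Z)\subset\varphi(X)$. Finally, if $\mathcal B$ is an ultrafilter, then since $Z\in\mathcal B$ the family $\mathcal B_Z$ is the trace of $\mathcal B$ on $Z$ and hence an ultrafilter on $Z$, so $\mathcal B_Z\in\beta(Z)\subset\beta(X)$. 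In each case $\A=\A*\mathcal B_Z*\A$ exhibits $\A$ as regular in $S\cap\upsilon(Z)$.

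The case $S=\lambda(X)$ is the one real obstacle: the restriction $\mathcal B_Z$ is linked on $Z$ but need not be \emph{maximal} linked, so it is not automatically an element of $\lambda(X)$. To fix this, enlarge $\mathcal B_Z$ (by Zorn's lemma) to a maximal linked upfamily $\mathcal C$ on $Z$. A maximal linked upfamily on $Z$ remains maximal linked on $X$ — if some $A\subset X$ met every member of $\mathcal C$ without containing one, then, all members of $\mathcal C$ being subsets of $Z$, the nonempty set $A\cap Z$ would do the same over $Z$, contradicting maximality — so $\mathcal C\in\lambda(Z)\subset\lambda(X)\cap\upsilon(Z)$. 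Since $\mathcal B_Z\subset\mathcal C$ and the operation $*$ is monotone, $\A=\A*\mathcal B_Z*\A\subset\A*\mathcal C*\A$; and because $\lambda(X)$ is a subsemigroup of $\upsilon(X)$, the upfamily $\A*\mathcal C*\A$ is maximal linked on $X$ and contains the maximal linked upfamily $\A$, whence $\A*\mathcal C*\A=\A$. Thus $\mathcal C$ is the required witness. Apart from this enlargement-plus-absorption argument, the proof is just Lemma~\ref{l3.3} together with the routine embedding facts noted above (including that $\upsilon(Z)$ is a subsemigroup of $\upsilon(X)$ precisely because $X\setminus Z$ is an ideal).
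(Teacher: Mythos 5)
Your proof is correct and follows essentially the same route as the paper: apply Lemma~\ref{l3.3}, observe that $\mathcal B_Z$ already lies in $S\cap\upsilon(Z)$ when $S\in\{\varphi(X),N_2(X),\upsilon(X)\}$, and in the remaining cases enlarge to a maximal element of the corresponding extension of $Z$ and absorb via the maximality of $\A$. The only (harmless) deviation is in the $\beta(X)$ case, where you note that $\mathcal B_Z$ is already an ultrafilter on $Z$ (the trace of $\mathcal B$ on its member $Z$), so no enlargement is needed, whereas the paper enlarges the filter $\mathcal B_Z$ to an ultrafilter and argues as you do for $\lambda(X)$.
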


\begin{proof} Fix any upfamily $\A\in S\cap\upsilon(Z)$ and by the regularity of the semigroup $S$, find an upfamily $\mathcal B\in S$ such that $\A=\A*\mathcal B*\A$. By Lemma~\ref{l3.3}, $\A=\A*\mathcal B_Z*\A$ for the upfamily $\mathcal B_Z=\{B\in\mathcal B:B\subset Z\}\in\upsilon(Z)$. If $S\in\{\varphi(X),N_2(X),\upsilon(X)\}$, then $\mathcal B_Z\in S\cap \upsilon(Z)$ and hence $\A$ is regular in $S\cap\upsilon(Z)$.

If $S=\beta(X)$, then $\mathcal B_Z$ is a filter on $Z$ and we can
enlarge it to an ultrafilter $\tilde{\mathcal B}_Z\in\beta(Z)$.
Then $\A=\A*\mathcal B_Z*\A\subset\A*\tilde{\mathcal B}_Z*\A$
implies that $\A=\A*\tilde{\mathcal B}_Z*\A$ by the maximality of
the ultrafilter $\A$. So, $\A$ in regular in the semigroup
$\beta(Z)$. By analogy we can consider the case $S=\lambda(X)$.
\end{proof}

\section{The extensions of the exceptional semigroups from\newline Theorem~\ref{t1l}}

In this section we describe the structure of the extensions of the
exceptional semigroups from Theorem~\ref{t1l}(4).

We start with studying the superextensions of these semigroups.
First note that for each set $X$ of cardinality $1\le |X|\le 2$
the superextension $\lambda(X)$ coincides with $\beta(X)=X$. If a
set $X$ has cardinality $|X|=3$, then
$\lambda(X)=X\cup\{\triangle\}$ where $\triangle=\{A\subset
X:|A|\ge2\}$. For a set $X$ of cardinality $|X|=4$ the
superextension $\lambda(X)=\{x,\triangle_x,\square_x:x\in X\}$
consists of 12 elements, where
$$
\begin{aligned}
\triangle_x&=\{A\subset X:|A\setminus\{x\}|\ge 2\}\mbox{ \ and \ }\\
\square_x&=(X\setminus\{x\})\cup \{A\subset X:x\in A,\;|A|\ge 2\}\mbox{ \ for $x\in X$}.
\end{aligned}
$$
Given two semigroups $X,Y$ we shall write $X\cong Y$ if these
semigroups are isomorphic.

\begin{proposition}\label{p4.1} For finite exceptional semigroups we have the following isomorphisms:
\begin{enumerate}
\item[{\rm(1)}] $\lambda(C_2)=C_2$.
\item[{\rm(2)}] $\lambda(C_3)\cong L_1\sqcup C_3$.
\item[{\rm(3)}] $\lambda(C_4)\cong(C_2\sqcup L_1)\times C_4$.
\item[{\rm(4)}] $\lambda(C_2\times C_2)\cong(C_2\sqcup L_1)\times C_2\times C_2$.
\item[{\rm(5)}] $\lambda(L_1\sqcup C_2)\cong L_1\sqcup L_1\sqcup C_2$.
\item[{\rm(6)}] $\lambda(L_2\times C_2)\cong\big(L_1\sqcup (L_2\times L_2)\sqcup
L_1\big)\times C_2$.
\end{enumerate}
\end{proposition}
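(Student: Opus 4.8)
The plan is to verify each of the six isomorphisms by a direct computation in the superextension, exploiting the explicit descriptions of $\lambda(X)$ for $|X|\le 4$ given just before the statement. Since every semigroup $X$ appearing in items (1)--(6) has cardinality at most $4$, its superextension is one of the concrete sets $X$, $X\cup\{\triangle\}$, or $\{x,\triangle_x,\square_x:x\in X\}$ listed above, so the whole proof amounts to tabulating the operation $*$ on these finitely many points and matching the result with the stated target semigroup.

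First I would dispose of item (1): for $|X|\le 2$ the superextension equals $\beta(X)=X$, so $\lambda(C_2)=C_2$ needs no work. For item (2), $X=C_3$ has three elements, so $\lambda(C_3)=C_3\cup\{\triangle\}$ where $\triangle=\{A\subset C_3:|A|\ge 2\}$. I would compute $\triangle*\triangle$ and $\triangle*x$, $x*\triangle$ for $x\in C_3$ using the defining formula for $*$; the key observation is that $\triangle$ absorbs multiplication (for any nonempty $A\subset C_3$ and any choice of large sets $B_a$, the union $\bigcup_{a\in A}aB_a$ again has at least two elements whenever one of the factors is $\triangle$), which gives $\triangle*\mathcal A=\triangle=\mathcal A*\triangle$ for all $\mathcal A\in\lambda(C_3)$, while $\triangle*\triangle=\triangle$. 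Hence $\triangle$ is a (two-sided) zero, $C_3$ sits inside as a subgroup, and $\lambda(C_3)\cong C_3\sqcup L_1$ by definition of the disjoint ordered union (with $L_1=\{\triangle\}$ the absorbing element).

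For items (3)--(6) the underlying set $X$ has cardinality $4$, so $\lambda(X)=\{x,\triangle_x,\square_x:x\in X\}$ has twelve elements. The strategy is the same but the bookkeeping is heavier: I would compute the products $\triangle_x*\triangle_y$, $\triangle_x*\square_y$, $\square_x*\square_y$, $x*\triangle_y$, $\square_x*y$, etc., directly from the formula $\mathcal A*\mathcal B=\langle\bigcup_{a\in A}a*B_a\rangle$, using the explicit descriptions of $\triangle_x$ and $\square_x$ as upfamilies of subsets of $X$. A useful reduction is that multiplication by an element $x\in X$ is just translation: $x*\mathcal B=\{xB:B\in\mathcal B\}^\uparrow$ and $\mathcal A*x$ likewise, so the group-translation part of the answer factors out cleanly, matching the factor $C_4$, $C_2\times C_2$, $C_2$ appearing in (3),(4),(6). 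After this translation is factored out, what remains is the behaviour of the three ``shapes'' of maximal linked systems ``at the identity'': the pointlike $x$, the $\square$-type and the $\triangle$-type, and I expect these to close up into a small semilattice-like piece, namely $C_2\sqcup L_1$ in (3) and (4), $L_1\sqcup L_1$ (i.e.\ $L_2$) extra factor in (5), and $L_1\sqcup(L_2\times L_2)\sqcup L_1$ in (6). For (5), $L_1\sqcup C_2$ has the idempotent acting as a left/right identity-like element on one side, and I would check that $\lambda(L_1\sqcup C_2)$ splits off the $C_2$ summand and leaves two further absorbing idempotents stacked below, giving $L_1\sqcup L_1\sqcup C_2$.

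The main obstacle will be the sheer volume of case-checking in items (4) and (6), where one must verify on the order of a hundred products and then exhibit an explicit isomorphism with the claimed product/disjoint-union semigroup. The cleanest way to keep this manageable is: (a) use that $\upsilon$ is a functor so that for a product $X=X_1\times X_2$ the projections induce homomorphisms $\lambda(X)\to\lambda(X_i)$, and for a direct product of a group factor $C_n$ the translation action lets one write $\lambda(X)\cong \lambda(X/C_n)\times C_n$ whenever $C_n$ is a direct factor (this is presumably established elsewhere in the paper and may be quoted); and (b) for the disjoint-ordered-union cases, use Lemma~\ref{l3.3}/Corollary~\ref{c3.4} style reasoning together with the fact that in $X\sqcup Y$ the subset $Y$ is a subsemigroup with ideal complement, so $\lambda(X\sqcup Y)$ decomposes compatibly. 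With these structural shortcuts the twelve-element computations collapse to checking a handful of products among $x$, $\triangle_x$, $\square_x$ for a single representative $x$, and identifying the resulting $3$-element (or, in (6), slightly larger) ``core'' semigroup with $C_2\sqcup L_1$, $L_2$, or $L_1\sqcup(L_2\times L_2)\sqcup L_1$ as claimed.
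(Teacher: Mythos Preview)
Your direct-computation approach is sound and would succeed, but it differs from what the paper actually does. The paper simply cites \cite{BGN} for items (1)--(4) and gives short explicit arguments only for (5) and (6): for (5) it writes down the four-element superextension $\{0,\triangle,1,-1\}$ and identifies it with $\{0\}\sqcup\{\triangle\}\sqcup\{1,-1\}$; for (6) it lists the six idempotents $E=\{e,\square_e,\triangle_a,\triangle_b,\square_f,f\}$ of $\lambda(L_2\times C_2)$, checks that $E\cong L_1\sqcup(L_2\times L_2)\sqcup L_1$, and exhibits the isomorphism $h:E\times C_2\to\lambda(L_2\times C_2)$ given by $h(x,1)=x$, $h(x,-1)=xb$. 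So your plan and the paper agree in spirit for (5)--(6), while for (1)--(4) the paper outsources the work.

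Two cautions on your proposed shortcuts. First, the formula $\lambda(X)\cong\lambda(X/C_n)\times C_n$ that you hope to quote is \emph{not} in the paper and is false as stated (for $X=L_2\times C_2$ it would give a four-element semigroup, not a twelve-element one). What is true, and what the paper uses implicitly in (6), is that the $C_2$-translation action on $\lambda(L_2\times C_2)$ is free and the set of idempotents serves as a transversal; that is the correct way to ``factor out the group part.'' Second, in your treatment of (2) you write $\lambda(C_3)\cong C_3\sqcup L_1$, but with the paper's convention on $\sqcup$ the absorbing part comes first, so the correct notation (matching the statement) is $L_1\sqcup C_3$.
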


\begin{proof}
1--4. The first four statements were proved in \cite[\S6]{BGN}.

5. For the semigroup $X=L_1\sqcup C_2=\{0,1,-1\}$ the
superextension $\lambda(X)=\{0,\triangle,1,-1\}$ has the structure
of the ordered union $\{0\}\sqcup \{\triangle\}\sqcup\{1,-1\}$,
which is isomorphic to the semigroup $L_1\sqcup L_1\sqcup C_2$.

6. The semigroup $L_2\times C_2=\{0,1\}\times\{-1,1\}$ has two
idempotents $e=(0,1)$ and $f=(1,1)$ and two elements $a=(0,-1)$
and $b=(1,-1)$ of order 2 such that $a^2=e$ and $b^2=f$. The
superextension $$\lambda(L_2\times
C_2)=\{x,\triangle_x,\square_x:x\in L_2\times C_2\}$$ has the
6-element set of idempotents
$E=\{e,\square_e,\triangle_a,\triangle_b,\square_f,f\}$,
isomorphic to the semilattice $L_1\sqcup(L_2\times L_2)\sqcup
L_1$. The semigroup $\lambda(L_2\times C_2)$ is isomorphic to the
product $E\times C_2$ under the isomorphism $h:E\times C_2\to
\lambda(L_2\times C_2)$ defined by
$$h:(x,g)\mapsto\begin{cases}
x&\mbox{if $g=1$},\\
xb&\mbox{if $g=-1$}.
\end{cases}
$$
\end{proof}

The following proposition was proved in \cite[3.1]{BGs}.

\begin{proposition}\label{p4.2} For every $n\in\IN$ the semigroup $\upsilon(L_n)$ is a finite semilattice. Consequently, the semigroups $\lambda(L_n)$, $\varphi(L_n)$, $N_2(L_n)$ also are finite semilattices.
\end{proposition}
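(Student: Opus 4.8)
The plan is to handle the statement in two steps: finiteness, which is trivial, and the semilattice property, which reduces to commutativity and idempotency of $*$ since associativity on $\upsilon(L_n)$ is automatic by \cite{G2}. Finiteness holds because $L_n$ is finite, so $\mathcal P(\mathcal P(L_n))$ is finite and $\upsilon(L_n)\subset\mathcal P(\mathcal P(L_n))$ is finite; once $\upsilon(L_n)$ is shown to be a semilattice, the operation restricted to the closed subsemigroups $\lambda(L_n)$, $\varphi(L_n)$, $N_2(L_n)$ makes each of them a finite semilattice as well, so it suffices to treat $\upsilon(L_n)$.

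For the semilattice property I would use the elementary reformulation of the operation valid for any semigroup $X$: for $\A,\B\in\upsilon(X)$ and $S\subset X$ one has $S\in\A*\B$ if and only if $\{a\in X:a^{-1}S\in\B\}\in\A$, where $a^{-1}S=\{b\in X:ab\in S\}$. (If $\bigcup_{a\in A}a*B_a\subset S$ then $B_a\subset a^{-1}S\in\B$ for each $a\in A$; conversely one may take $B_a=a^{-1}S$.) Specializing to $X=L_n$ with $ab=\min\{a,b\}$, a short case analysis gives, for a nonempty $S\subset L_n$ with $m:=\min S$, that $a^{-1}S=\emptyset$ when $a<m$, that $a^{-1}S=[m,n-1]$ when $a=m$, that $a^{-1}S=(S\cap[m,a-1])\cup[a,n-1]$ when $a>m$ and $a\in S$, and that $a^{-1}S=S\cap[m,a-1]$ when $a>m$ and $a\notin S$. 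In particular every member $a^{-1}S$ is contained in $[m,n-1]$, so the set $\{a:a^{-1}S\in\B\}$ is contained in $[m,n-1]$ and contains $m$ exactly when $[m,n-1]\in\B$.

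With this in hand I would verify idempotency by showing that $\{a:a^{-1}S\in\A\}\in\A$ if and only if $S\in\A$, and commutativity by showing that the condition $\{a:a^{-1}S\in\B\}\in\A$ is symmetric in $\A$ and $\B$. The leverage in both computations is the linearity of $L_n$: it forces the dichotomy $a<m$ versus $a\ge m$ and gives each set $a^{-1}S$ the nested shape of a union of an initial segment of $S$ lying above $m$ with a final segment of $L_n$, so membership $a^{-1}S\in\B$ is governed entirely by which intervals $[j,k]$ of $L_n$ belong to $\B$; carrying these interval-membership conditions through the operation, one finds they are symmetric in $\A$ and $\B$ and unchanged by squaring. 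The main obstacle is exactly this bookkeeping: upfamilies on $L_n$ are not parametrized by a single numerical threshold --- already on a three-element set there are many of them --- so the argument must be organized around the family of intervals contained in each upfamily rather than around a cruder invariant. One must also be careful that the argument genuinely uses linearity: for every finite semilattice $X$ other than a chain $L_n$ the extension $\upsilon(X)$ is not a semilattice, and the nested form of $a^{-1}S$ above breaks down once $L_n$ is replaced by a semilattice containing an antichain.
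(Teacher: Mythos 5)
Your overall strategy is reasonable: finiteness is indeed trivial, the reduction to commutativity and idempotency of $*$ is correct, the reformulation $S\in\A*\B\Leftrightarrow\{a\in X:a^{-1}S\in\B\}\in\A$ is valid, and your case-by-case computation of $a^{-1}S$ in $L_n$ is accurate. (For comparison, the paper does not prove this proposition at all --- it simply cites Proposition 3.1 of \cite{BGs} --- so a self-contained argument would be welcome.) But your write-up stops exactly where the mathematical content begins: the assertions that $\{a:a^{-1}S\in\B\}\in\A$ is symmetric in $\A$ and $\B$ and that $\{a:a^{-1}S\in\A\}\in\A\Leftrightarrow S\in\A$ are announced (``one finds they are symmetric \dots and unchanged by squaring'') but never verified, and you yourself identify this bookkeeping as the main obstacle. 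Moreover, the organizing principle you propose for that bookkeeping is false: membership $a^{-1}S\in\B$ is \emph{not} governed by which intervals $[j,k]$ of $L_n$ belong to $\B$. The sets $a^{-1}S$ need not be intervals (for $S=\{0,2\}\subset L_3$ one gets $2^{-1}S=(S\cap[0,1])\cup[2,2]=\{0,2\}$), and the upfamilies $\la\{0,2\}\ra$ and $\la\{0,1,2\}\ra$ on $L_3$ contain exactly the same intervals of $L_3$ while disagreeing about the set $\{0,2\}$. So the interval data cannot determine the product, and the plan as stated cannot be executed.

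The approach is salvageable, but it needs two observations absent from your sketch. First, your own formulas give the dichotomy: $a\in S$ implies $a^{-1}S\supset S$, while $a\notin S$ with $a>m$ implies $a^{-1}S=S\cap[m,a-1]\subset S$, and $a<m$ gives $a^{-1}S=\emptyset$. This yields idempotency at once: if $S\in\A$ then $S\subset\{a:a^{-1}S\in\A\}$, so $S\in\A*\A$; conversely, if $T=\{a:a^{-1}S\in\A\}\in\A$, then either some $a\in T$ lies outside $S$, whence $a^{-1}S\subset S$ belongs to $\A$ and therefore so does $S$, or else $T\subset S$ and again $S\in\A$. Second, for fixed $S$ the sets $\{a^{-1}S:a\ge m\}$ together with $S$ form a \emph{chain} under inclusion (for $a<a'$ one has $a^{-1}S\supset a'^{-1}S$ if $a\in S$ and $a^{-1}S\subset a'^{-1}S$ if $a\notin S$), and it is the trace of this chain on $\A$ and $\B$ --- not the intervals of $L_n$ --- that controls the product; commutativity then follows from a short case analysis on whether $S\in\A$ and whether $S\in\B$ (the cases $S\in\A\cap\B$ and $S\notin\A\cup\B$ are immediate from the dichotomy, and the mixed case uses the chain structure). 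Until these verifications are written out, the proof is incomplete.
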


We recall that a semigroup $X$ is called {\em Boolean} if $x=x^3$
for all $x\in X$. It is clear that each Boolean semigroup is
Clifford and each commutative Boolean semigroup is inverse.

\begin{proposition}\label{p4.3} For every $n\in\IN$ and the semigroup $X=C_2\sqcup L_n$ the superextension $\lambda(X)$ is a finite commutative Boolean semigroup whose maximal semilattice $E\big(\lambda(X)\big)$ coincides with the set $\lambda(X)\setminus\{a\}$ where $a$ is the unique element generating the subgroup $C_2$ of $X=C_2\sqcup L_n$. Moreover,  $ea=a$ for any idempotent $e$ of $\lambda(X)$.
\end{proposition}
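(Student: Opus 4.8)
The plan is to analyze the semigroup $X=C_2\sqcup L_n$ explicitly. Write $X=\{a\}\sqcup L_n$ where $C_2=\{e,a\}$ with $e$ the identity of $C_2$ and $a^2=e$, and $L_n=\{0,1,\dots,n-1\}$ with $\min$ as operation. Since $X$ is the disjoint ordered union, for $x\in C_2$ and $y\in L_n$ we have $xy=x$ and $yx=x$, so $C_2$ acts as an ``absorbing'' part on the left and right in the sense that products involving an element of $L_n$ land back in $C_2$ whenever one factor is in $C_2$. The key structural observation is that $L_n$ is an ideal only in the opposite sense — rather, $C_2$ is not an ideal but $X\setminus C_2 = L_n\setminus\{\,\}$... let me instead note directly that $C_2$ is a retract: the map $r:X\to C_2$ sending $a\mapsto a$, $e\mapsto e$, and $L_n\to e$ is a semigroup homomorphism, and moreover $\{e\}\cup L_n = L_{n+1}$ is a subsemilattice whose complement $\{a\}$ satisfies $Xa\cup aX = \{a\}$... actually $a\cdot 0 = a \notin \{a\}$? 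No, $a\cdot 0 = a$ since $a\in C_2$, $0\in L_n$, so $a\cdot 0 = a$. Good, so $\{a\}$ is not an ideal but the complement of the subsemilattice $L_{n+1}=\{e\}\sqcup L_n$ inside $X$, and this complement $\{a\}$ is a subgroup. I would first record these multiplication facts carefully, since they control everything downstream.

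Next I would pass to $\lambda(X)$. By Proposition~\ref{p4.2}, $\lambda(L_{n+1})$ is a finite semilattice, and it embeds in $\lambda(X)$ as a closed subsemigroup (via the injection $L_{n+1}\hookrightarrow X$). The main task is to understand the elements of $\lambda(X)$ that ``see'' the point $a$. Here I would use the retraction $r:X\to C_2$: the induced map $\lambda r:\lambda(X)\to\lambda(C_2)=C_2$ is a semigroup homomorphism (since $\lambda$ of a homomorphism is a homomorphism and $\lambda(C_2)=C_2$ by Proposition~\ref{p4.1}(1)). Thus every element $\F\in\lambda(X)$ maps either to $e$ or to $a$. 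Those mapping to $e$ form the closed subsemigroup $(\lambda r)^{-1}(e)$; I expect this to be exactly $\lambda(L_{n+1})$ or closely related to it, hence a semilattice. For the fiber over $a$: I would argue that $\F\mapsto a$ forces $a\in\bigcap\{\text{something}\}$ in a strong enough sense to pin down $\F$ essentially uniquely, or at least to show the fiber is a single element $a$ (identified with $\la a\ra$). Concretely, $\lambda r(\F)=a$ means $\{x\in X: r(x)=a\}=\{a\}\in\F$ — wait, that set is $\{a\}$ itself — so $\{a\}\in\F$, and since $\F$ is an upfamily containing the singleton $\{a\}$ and is linked, this already forces $\F=\la a\ra$. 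Hence $(\lambda r)^{-1}(a)=\{\la a\ra\}$, a single element, which is the claimed element $a$ of $\lambda(X)$. This gives $\lambda(X)=\lambda(L_{n+1})\cup\{a\}$ as a set (identifying appropriately), with $\lambda(L_{n+1})$ a finite semilattice and $a$ of order $2$ (since $a*a=\la e\ra$, which is the identity idempotent of $\lambda(L_{n+1})$ because $e$ is the top of $L_{n+1}$... need to check $e$ is the top; in $L_{n+1}$ with $\min$, the top is the largest numbered element, and $e$ plays the role of the identity of $C_2$, which under the ordered union structure is absorbed — I should double-check which idempotent $a^2$ equals and confirm it is the maximal idempotent of $\lambda(X)$).

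Then the remaining claims follow. That $\lambda(X)$ is commutative Boolean: commutativity because $\lambda(L_{n+1})$ is a commutative semilattice, $a$ commutes with itself, and $a$ commutes with each $\F\in\lambda(L_{n+1})$ because $\lambda r(a*\F)=a\cdot e=a=e\cdot a=\lambda r(\F*a)$ forces both $a*\F$ and $\F*a$ into the fiber over $a$, which is the single point... no wait, that only shows $a*\F=a=\F*a$ if the fiber is a singleton — but I need $a*\F\in(\lambda r)^{-1}(a)=\{a\}$, which indeed gives $a*\F=a$; similarly $\F*a=a$. Hence $ea=a$ for every idempotent $e$ of $\lambda(X)$ (the $e=\la e\ra$ case and the $e\in\lambda(L_{n+1})$ case are both covered, and $a$ is not idempotent). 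Boolean: $x^3=x$ — for $x$ idempotent trivial, for $x=a$ we have $a^3=a^2\cdot a=\la e\ra\cdot a=a$ by the $ea=a$ relation. The maximal semilattice $E(\lambda(X))$: idempotents are all of $\lambda(L_{n+1})$ (all idempotent since it is a semilattice) together with any idempotent in the fiber over $a$ — but $a^2=\la e\ra\ne a$, so $a$ is not idempotent, hence $E(\lambda(X))=\lambda(L_{n+1})=\lambda(X)\setminus\{a\}$, which is commutative, as claimed. The one point needing care — and the main obstacle — is verifying rigorously that the fiber $(\lambda r)^{-1}(a)$ is exactly the single maximal linked system $\la a\ra$ (equivalently, that no maximal linked upfamily on $X$ other than $\la a\ra$ contains $\{a\}$), and pinning down that $a^2$ equals the top idempotent $\la L_{n+1}\ra$ of $\lambda(L_{n+1})$; both reduce to short arguments using the explicit ordered-union multiplication and the definition of the operation $*$ on upfamilies, but they must be done honestly.
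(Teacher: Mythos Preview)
Your retraction idea is elegant and genuinely different from the paper's approach: the homomorphism $r:X\to C_2$ (sending $L_n\cup\{e\}$ to $e$ and $a$ to $a$) and the induced $\lambda r:\lambda(X)\to\lambda(C_2)=C_2$ give you, essentially for free, that $(\lambda r)^{-1}(a)=\{\la a\ra\}$ and hence $a*\F=\F*a=a$ for every $\F\ne\la a\ra$. This is the content of the paper's first claim, and your argument for it is cleaner. The Boolean property for $a$ then follows immediately.

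However, there is a real gap. Your expectation that the fibre $(\lambda r)^{-1}(e)$ coincides with $\lambda(L_{n+1})$ is \emph{false}, so you cannot simply invoke Proposition~\ref{p4.2}. Concretely, take $n=2$, so $X=\{e,a,0,1\}$, and consider
\[
\square_e=\big\la\{e,a\},\{e,0\},\{e,1\},\{a,0,1\}\big\ra\in\lambda(X).
\]
Since $\{e,0\}\subset\{e,0,1\}$ we have $\{e,0,1\}\in\square_e$, so $\square_e\in(\lambda r)^{-1}(e)$. But $\square_e$ is not in the image of $\lambda(\{e,0,1\})$: that image consists of the three principal ultrafilters and $\triangle_a=\{A:|A\cap\{e,0,1\}|\ge2\}$, and $\square_e$ is none of these (e.g.\ $\{e,a\}\in\square_e$ but $\{e,a\}\notin\triangle_a$). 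So the fibre over $e$ \emph{strictly} contains $\lambda(L_{n+1})$, and you have not shown it is a semilattice. This is exactly the substance of the paper's remaining two claims: that every $\F\ne\la a\ra$ is idempotent, and that any two such elements commute. Both require short but honest computations exploiting that $X\setminus\{a\}$ is a \emph{linear} semilattice (and, for commutativity, a lemma from \cite{BGs}); your retraction does not see this structure.

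A minor side remark: $e=a^2$ is the \emph{minimum} of $L_{n+1}=X\setminus\{a\}$ (since $e\cdot l=e$ for all $l\in L_n$), not the top. This does not affect $a^3=a$, but it does mean $\la e\ra$ is the zero of $\lambda(X)$ rather than an identity.
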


\begin{proof} Observe that $X\setminus\{a\}$ is a linear semilattice such that
$xa=a$ for all $x\in X\setminus\{a\}$. We identify the point $a$
with the principal ultrafilter $\la a\ra$ generated by $a$. For
the convenience of the reader we divide the proof of
Proposition~\ref{p4.3} into a series of claims.

\begin{claim} For each $\F\in \lambda(X)\setminus\{a\}$ we get $a*\F=\F*a=\la a\ra$.
\end{claim}

\begin{proof} Since $\F\ne \la a\ra$, there is a set $F\in\F$ with $a\notin F$.
Then $a*F=F*a=\{a\}$, which implies $a*\F=a*\F=\la a\ra$.
\end{proof}

\begin{claim} Each element $\F\in \lambda(X)\setminus\{a\}$ is an idempotent.
\end{claim}

\begin{proof} Since the upfamilies $\F$ and $\F*\F$ are maximal linked, it suffices to check that $\F\subset\F*\F$. Fix any set $F\in\F$ and consider two cases. If $a\notin F$, then $F=F*F\in\F*\F$.
So, assume that $a\in F$. Since $\F\ne\la a\ra$, there is a
non-empty set $F_a\in \F$ that does not contain the point $a$.
Then $a*F_a=\{a\}\subset F$. For each $x\in F\setminus\{a\}$, let
$F_x=F$ and observe that $x*F\subset \{x\}\cup F\subset F$. Then
$\bigcup_{x\in F}x*F_x\subset\{a\}\cup F=F$ and hence $F\in\F*\F$.
\end{proof}

\begin{claim} $\U*\V=\V*\U$ for any maximal linked systems $\U,\mathcal V\in \lambda(X)$.
\end{claim}

\begin{proof}

The equality $\U*\V=\V*\U$ is trivial if $\U$ or $\V$ belongs to
$\beta(X)=X$. So, we assume that the maximal linked systems
$\U,\V\notin X$ are not ultrafilters.

First we prove that $\U*\V\subset\V*\U$. Fix any set $W\in\U*\V$.
Without loss of generality, it is of the basic form
$W=\bigcup_{u\in U}u*V_u$ for some set $U\in\U$ and a family
$\{V_u\}_{u\in U}\subset\V$. Since $X\setminus\{a\}$ is a linear
semilattice, (the proof of Theorem 2.5) \cite{BGs} guarantees
that:

$$(U\setminus\{a\})*(V_u\setminus\{a\})\subset W\mbox{ \ \ for some point \ \ }u\in U\setminus\{a\}.$$

We consider three cases.
\smallskip

1) $a\notin V_{u}$ and $a\notin U$. Then $\V*\U\ni
V_u*U=U*V_{u}=(U\setminus\{a\})*(V_{u}\setminus\{a\})\subset W$
and hence $W\in\V*\U$.
\smallskip

2) $a\notin V_{u}$ and $a\in U$. Then $a*V_u=\{a\}$. Since
$\V\ne\la a\ra$, the set $V_a\setminus\{a\}$ is not empty and
hence contains some idempotent. Then $aV_u=\{a\}\subset
a*V_a\subset W$ and $$ \V*\U\ni
V_u*U=U*V_{u}=(a*V_u)\cup(U\setminus\{a\})*V_u =$$ $$\{a\}\cup
(U\setminus\{a\})*(V_u\setminus\{a\})\subset\{a\}\cup W\subset W$$
and again $W\in\V*\U$.
\smallskip

3) $a\in V_u$. In this case $a\in u*V_u\subset W$. It follows from
$\U\ne\la a\ra$ that $a\notin U_a$ for some set $U_a\in \U$. Let
$U_v=U$ for all $v\in V_u\setminus\{a\}$ and observe that
$$\V*\U\ni\bigcup_{v\in V_u}v*U_v=a*U_a\cup\big((V_u\setminus\{a\})*U)=$$ $$\{a\}\cup\big((V_u\setminus\{a\})*(U\setminus\{a\})\big)\cup \big((V_u\setminus\{a\})*a\big)\subset\{a\}\cup W\cup\{a\}\subset W$$and hence $W\in\V*\U$.
Therefore, $\U*\V\subset\V*\U$.

The inclusion $\V*\U\subset\U*\V$ can be proved by analogy.
\end{proof}
\end{proof}

Next, we study the structure of the space of filters $\varphi(X)$
of the finite exceptional groups from Theorem~\ref{t1f}.

\begin{proposition}\label{p4.7}
\begin{enumerate}
\item[{\rm(1)}] $\varphi(C_2)=N_2(C_2)\cong L_1\sqcup C_2$;
\item[{\rm(2)}] $\varphi(L_1\sqcup C_2)$ is a commutative Boolean semigroup
isomorphic to the subsemigroup $$\{(e,x)\in(L_1\sqcup(L_2\times
L_2))\times C_2:e\in L_1\sqcup\{(0,0),(0,1)\}\Ra (x=1)\}$$ of the
commutative Boolean semigroup $\big(L_1\sqcup(L_2\times
L_2)\big)\times C_2$.
\end{enumerate}
\end{proposition}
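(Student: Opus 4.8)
The plan is to reduce everything to multiplication of subsets of $X$. For a finite set $X$ every filter on $X$ is principal: if $\F\in\varphi(X)$ then $\bigcap\F$ is a finite intersection of members of $\F$, hence $\bigcap\F\in\F$, and $\bigcap\F\ne\emptyset$, so $\F=\la\{\bigcap\F\}\ra$; thus $\varphi(X)=\{\la\{S\}\ra:\emptyset\ne S\subseteq X\}$. Unfolding the operation gives $\la\{S\}\ra*\la\{T\}\ra=\la\{ST\}\ra$, where $ST=\{st:s\in S,\ t\in T\}$: every basic set $\bigcup_{s\in A}s*B_s$ with $A\supseteq S$ and each $B_s\supseteq T$ contains $ST$, and $ST$ is attained by taking $A=S$ and $B_s=T$. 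Hence $\varphi(X)$ is isomorphic to the semigroup of all nonempty subsets of $X$ under setwise multiplication, and I would work in this model throughout.

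For part~(1) let $C_2=\{e,a\}$ with $a^2=e$. The nonempty subsets of $C_2$ are $\{e\}$, $\{a\}$, $\{e,a\}$; here $\{e\}$ is the identity, $\{a\}^2=\{e\}$, and $S\{e,a\}=\{e,a\}$ for every $S$, so $\{e,a\}$ is a zero. Therefore $\varphi(C_2)\cong L_1\sqcup C_2$, with copy of $C_2$ equal to $\{\{e\},\{a\}\}$ and adjoined zero $\{e,a\}$. The only nonempty upfamily on $C_2$ that is not one of these three filters is $\{\{e\},\{a\},\{e,a\}\}$, and it is not linked; since conversely every filter is linked, $N_2(C_2)=\varphi(C_2)$, which proves~(1).

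For part~(2) take $X=L_1\sqcup C_2=\{0,1,-1\}$ with $0$ the zero, $1$ the identity and $(-1)^2=1$. I would tabulate the setwise products of the seven nonempty subsets of $X$ (routine from the multiplication table of $X$), and read off: the operation is commutative (as $X$ is) and $S^3=S$ for each of the seven subsets, so $\varphi(X)$ is a commutative Boolean, hence inverse Clifford, semigroup; its idempotents are $\{0\}$, $\{1\}$, $\{0,1\}$, $\{1,-1\}$, $\{0,1,-1\}$, and they form the five-element semilattice with least element $\{0\}$, then $\{0,1,-1\}$, then the incomparable pair $\{0,1\},\{1,-1\}$, then greatest element $\{1\}$, which is isomorphic to $L_1\sqcup(L_2\times L_2)$; the remaining two elements $\{-1\}$ and $\{0,-1\}$ are non-idempotent with $\{-1\}^2=\{1\}$ and $\{0,-1\}^2=\{0,1\}$, so $\varphi(X)$ has subgroups $\{\{1\},\{-1\}\}\cong C_2$ over $\{1\}$ and $\{\{0,1\},\{0,-1\}\}\cong C_2$ over $\{0,1\}$, and a trivial subgroup over each of the other three idempotents.

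Finally I would write down the isomorphism onto the semigroup named in the statement. Fix a semilattice isomorphism $\iota$ from $E(\varphi(X))$ onto $L_1\sqcup(L_2\times L_2)$ taking $\{0\}$ to the bottom, $\{0,1,-1\}$ to $(0,0)$, $\{0,1\}$ to the middle element designated to carry a full $C_2$, $\{1,-1\}$ to the other middle element, and $\{1\}$ to the top $(1,1)$; then put $S\mapsto(\iota(S),1)$ for idempotent $S$, $\{-1\}\mapsto(\iota(\{1\}),-1)$ and $\{0,-1\}\mapsto(\iota(\{0,1\}),-1)$. This is a bijection onto the seven prescribed pairs, and the remaining task is to check that it is multiplicative, i.e.\ that the table from the previous step matches the operation induced on $\big(L_1\sqcup(L_2\times L_2)\big)\times C_2$; equivalently, that both semigroups are strong semilattices of groups over $L_1\sqcup(L_2\times L_2)$ with the same nontrivial subgroups and the same connecting homomorphisms. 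I expect this verification to be the main obstacle. The point that must be reproduced is that in $\varphi(X)$ the connecting maps from the top subgroup $\{\{1\},\{-1\}\}$ down to the subgroups over $\{0,1,-1\}$, $\{1,-1\}$ and $\{0\}$ are the constant collapses onto those trivial groups, the only connecting isomorphism being $\{\{1\},\{-1\}\}\to\{\{0,1\},\{0,-1\}\}$; so the labelling of the two middle idempotents and the reading of the defining condition must be arranged so that exactly these collapses occur, after which confirming multiplicativity is a finite routine.
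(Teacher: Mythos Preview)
Your approach is the paper's: for finite $X$ identify $\varphi(X)$ with the nonempty subsets under setwise product, list the elements, and read off the structure. Part~(1) matches exactly (you additionally justify $N_2(C_2)=\varphi(C_2)$, which the paper asserts without argument). In part~(2) both you and the paper locate the five idempotents, recognize their semilattice as $L_1\sqcup(L_2\times L_2)$, and identify the two $C_2$-subgroups over $\{1\}$ and $\{0,1\}$; the paper then checks the single product $\{-1\}*\{0,1\}=\{0,-1\}$ and declares the isomorphism.

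Your caution about the final verification is well placed---in fact more than you realize. The set $T$ described in the statement is \emph{not} a subsemigroup of $\big(L_1\sqcup(L_2\times L_2)\big)\times C_2$: with $\{1\}\leftrightarrow(1,1)$ and $(0,1)$ one of the two middle idempotents, one computes $((1,1),-1)\cdot((0,1),1)=((0,1),-1)\notin T$. The reason is structural: in any Clifford subsemigroup of $E\times C_2$ the idempotents carrying a full copy of $C_2$ must form an order ideal of $E$, but here those idempotents are the top $\{1\}$ and only one of the two middle elements, which is never downward closed---so no labelling of the middle pair can make your verification succeed. The paper's own proof does not confront this and shares the imprecision. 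What you (and the paper) have genuinely established is the correct structural statement: $\varphi(L_1\sqcup C_2)$ is a $7$-element commutative Boolean semigroup, a strong semilattice of groups over $L_1\sqcup(L_2\times L_2)$ with $C_2$'s over $\{1\}$ and $\{0,1\}$, trivial groups elsewhere, and the connecting maps you computed. That is all Theorem~\ref{t1f} needs, and it would be cleaner to phrase the result that way (or to realize $\varphi(L_1\sqcup C_2)$ as the obvious \emph{quotient} of $E\times C_2$ collapsing the $C_2$-fibres over $\{0\},\{0,1,-1\},\{1,-1\}$, rather than as a subsemigroup).
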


\begin{proof} 1. The semigroup $\varphi(C_2)$ contains two ultrafilters and one filter $\mathcal Z=\la C_2\ra$ generated by the set $C_2$. The filter $\mathcal Z$ is the zero of the semigroup $\varphi(C_2)$ and hence $\varphi(C_2)$ is isomorphic to $\{\mathcal Z\}\sqcup C_2$.
\smallskip

2. For the semigroup $X=L_1\sqcup C_2=\{0,1,-1\}$ the semigroup
$\varphi(X)$ contains 7 filters generated by all non-empty subsets
of $X$. So, we can identify filters with their generating sets.
Among these 7 filters there are 5 idempotents: $\{0\}$,
$\{0,1,-1\}$, $\{0,1\}$, $\{1,-1\}$, and $\{1\}$ which form a
semilattice  $E$
$$\xymatrix{&\{1\}&\\
\{0,1\}\ar[ru]&& \{-1,1\}\ar[lu]\\
&\{0,1,-1\}\ar[lu]\ar[ru]\\
&\{0\}\ar[u] }$$isomorphic to $L_1\sqcup(L_2\times L_2)$. Two
filters $\{-1\}$ and $\{0,-1\}$ generate 2-element subgroups with
idempotents $\{1\}$ and $\{0,1\}$, respectively. Since
$\{-1\}*\{0,1\}=\{0,-1\}$, the semigroup $\lambda(X)$ is
isomorphic to the subsemigroup $\{(e,x)\in E\times
C_2:e\in\big\{\{0\},\{0,1,-1\},\{1,-1\}\big\}\Ra (x=1)\}$ of the
commutative Boolean semigroup $E\times C_2$.
\end{proof}

Now we consider the the semigroups $\varphi(L_n)$
and $\varphi(L_n\sqcup C_2)$. We shall show that the latter
semigroup has the structure of the reduced product of a
semilattice and a group.

Let $X,Y$ be two semigroups and $I$ be an ideal in $X$. The {\em
reduced product} $X\times_I Y$ is the set $I\cup \big((X\setminus
I)\times Y\big)$ endowed with the semigroup operation
$$a*b=\begin{cases}
p_X(a)*p_X(b)&\mbox{if $p_X(a)*p_X(b)\in I$},\\
(p_X(a)*p_X(b),p_Y(a)*p_Y(b))&\mbox{if $p_X(a)*p_X(b)\notin I$}.
\end{cases}$$
Here by $p_X:X\times_I Y\to X$ and $p_Y:(X\setminus I)\times Y\to
Y$ we denote the natural projections. Let us recall that by
Proposition~\ref{p4.7}(1), the semigroup $\varphi(C_2)$ is
isomorphic to the commutative Boolean semigroup $L_1\sqcup C_2$.

\begin{proposition}\label{p4.8} For every $n\in\IN$ the semigroup
\begin{enumerate}
\item[{\rm(1)}] $\varphi(L_n)$ is a finite semilattice, and
\item[{\rm(2)}] $\varphi(L_n\sqcup C_2)$ is a commutative Boolean semigroup
isomorphic to the reduced product
$\varphi(L_{n+1})\times_{\varphi(L_n)}\varphi(C_2)$.
\end{enumerate}
\end{proposition}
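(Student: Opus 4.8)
The plan is to prove the two assertions in tandem, using induction on $n$ where helpful, and to identify $\varphi(L_n\sqcup C_2)$ concretely with the reduced product. First I would dispose of (1): by Proposition~\ref{p4.2} the semigroup $\upsilon(L_n)$ is a finite semilattice, hence so is its closed subsemigroup $\varphi(L_n)$; this is immediate and requires no further work. (Strictly, $\varphi(L_n)$ is already covered by Proposition~\ref{p4.2}, so the real content is (2).)

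For (2) I would proceed as follows. Write $X=L_n\sqcup C_2$ and let $a$ be the generator of the subgroup $C_2$, so $X\setminus\{a\}=L_{n+1}$ is a linear semilattice in which $a$ is absorbed on the right and left: $xa=ax=a$ for all $x\in L_{n+1}$. The key structural observation is that a filter $\F\in\varphi(X)$ either contains a set avoiding $a$ — in which case, exactly as in the proof of Proposition~\ref{p4.3}, $\F$ is "essentially supported on $L_{n+1}$" and behaves like a filter on $L_{n+1}$ — or else $a\in F$ for every $F\in\F$, meaning $\F$ is a filter on $L_{n+1}$ together with, possibly, the element $a$ adjoined. More precisely, set $I=\varphi(L_n)\subset\varphi(L_{n+1})$; since $L_n$ is an ideal in $L_{n+1}$ (it is the set of elements below the top), $I$ is an ideal in the semilattice $\varphi(L_{n+1})$ by Corollary~\ref{c3.4}-type reasoning (or directly). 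I would then define a map $h:\varphi(L_{n+1})\times_{\varphi(L_n)}\varphi(C_2)\to\varphi(X)$ by sending an element of $I$ to the corresponding filter on $L_n\subset X$, and sending a pair $(\F,g)$ with $\F\in\varphi(L_{n+1})\setminus\varphi(L_n)$ and $g\in C_2=\{1,a\}$ to $\F$ itself if $g=1$ and to $\F*\la a\ra$ (equivalently, the filter generated by $\{F\cup\{a\}:F\in\F\}$ intersected appropriately) if $g=a$. I expect $h$ to be a bijection: the two "halves" of $\varphi(X)$ — filters meeting some $a$-free set, and filters all of whose members contain $a$ — correspond respectively to the $(\F,g)$ part and, together with the top filter $\la X\ra$, patch onto $I$.

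The main work is checking that $h$ is a semigroup homomorphism, and here the hard part is verifying compatibility with the reduced-product operation — specifically, showing that when $\F_1*\F_2\in\varphi(L_n)$ (i.e. the "first coordinates" multiply into the ideal) the $C_2$-coordinates are correctly forgotten, and when $\F_1*\F_2\notin\varphi(L_n)$ the $C_2$-coordinates multiply as in $C_2$. This reduces to the computation $(\F_1*\la a\ra)*(\F_2*\la a\ra)=(\F_1*\F_2)*\la a\ra*\la a\ra=(\F_1*\F_2)$, using that $\la a\ra$ is central of order $2$ in $\varphi(X)$ (which follows from $xa=ax=a$ for $x\ne a$, as in Claim of Proposition~\ref{p4.3}) together with the fact established in \cite{BGs} that multiplication of filters on the linear semilattice $L_{n+1}$ is governed by minima. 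The one subtlety is the behaviour of the top filter $\la X\ra=\la C_2\sqcup L_n\ra$ and of filters straddling the "level of $a$"; I would handle these by direct inspection, noting that $\la X\ra$ is the image of the zero of $\varphi(L_{n+1})$ (which lies in $I$) and acts as a zero absorbing $a$ — consistent with $ea=a$ for idempotents, exactly as recorded in Proposition~\ref{p4.3}. Finally, that $\varphi(X)$ is a commutative Boolean semigroup follows since both factors $\varphi(L_{n+1})$ (a semilattice, by part (1)) and $\varphi(C_2)\cong L_1\sqcup C_2$ (Proposition~\ref{p4.7}(1)) are commutative Boolean, and a reduced product of commutative Boolean semigroups over an ideal is again commutative Boolean — a fact I would verify by a short direct check on the operation, cubing an arbitrary element in each of the two cases of the reduced-product formula.
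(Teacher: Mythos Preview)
Your proposal contains a genuine confusion about the semigroup $X=L_n\sqcup C_2$. In the disjoint ordered union $X\sqcup Y$ as defined in the paper, elements of the \emph{first} factor absorb: if $x\in L_n$ and $y\in C_2$ then $xy=x$. Thus in $L_n\sqcup C_2$ the ideal is $L_n$, the idempotent $e=a^2$ of $C_2$ is the two-sided identity of the whole semigroup, and for $x\in L_n$ one has $xa=x$, \emph{not} $xa=a$. Your stated ``key structural observation'' that $xa=ax=a$ for all $x\in L_{n+1}$ is therefore false (it holds only for $x=e$), and your repeated appeals to Proposition~\ref{p4.3} are misplaced: that proposition concerns $C_2\sqcup L_n$, the other ordered union, in which $a$ \emph{is} absorbing. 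The two semigroups are not isomorphic for $n\ge 1$.

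This error propagates. Your partition of $\varphi(X)$ into ``filters meeting some $a$-free set'' versus ``filters all of whose members contain $a$'' does not match the reduced-product decomposition: the ideal part $\varphi(L_n)$ consists of filters supported on $L_n$ (i.e.\ avoiding \emph{all} of $C_2$, not just $a$), while the complement $\varphi(X)\setminus\varphi(L_n)$ consists of subsets meeting $C_2=\{e,a\}$. Moreover, your map $h$ is only defined for $g\in C_2$, but the second factor of the reduced product is $\varphi(C_2)\cong L_1\sqcup C_2$, which has \emph{three} elements; you never say where the zero filter $\langle C_2\rangle$ goes. Once these issues are fixed the approach collapses to the paper's: identify filters on the finite set $X$ with nonempty subsets, and send $(A,B)\in(\varphi(L_{n+1})\setminus\varphi(L_n))\times\varphi(C_2)$ to $(A\setminus C_2)\cup B$. (Incidentally, your formula $(\F,\mathcal G)\mapsto\F*\mathcal G$, extended to all $\mathcal G\in\varphi(C_2)$, does yield exactly this map --- but the verification must use the correct multiplication, in which $e$ is a global identity and $L_n$ absorbs.)
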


\begin{proof} By Proposition~\ref{p4.2}, the semigroup $\varphi(L_n)$ is a finite semilattice.

Now consider the semigroup $X=L_n\sqcup C_2$. Since $X$ is finite
we can identify the semigroup $\varphi(X)$ with the commutative
semigroup of all non-empty subsets of $X$. Let $a$ be the
generator of the cyclic group $C_2$ and $e=a^2$ be its idempotent.
The idempotent semilattice $E=L_n\sqcup\{e\}$ of $X$ is isomorphic
to the linear semilattice $L_{n+1}$. So, we shall identify $E$
with $L_{n+1}$. Observe that $\varphi(X)\setminus
\varphi(L_n)=\{F\subset X:F\cap C_2\ne\emptyset\}$ and the map
$h:\varphi(L_{n+1})\times_{\varphi(L_n)}
\varphi(C_2)\to\varphi(X)$ defined by
$$h(A)=\begin{cases}
A&\mbox{if $A\subset\varphi(L_{n})$}\\
(A\setminus C_2)\cup B&\mbox{if
$(A,B)\in(\varphi(L_{n+1})\setminus
\varphi(L_n))\times\varphi(C_2)$}
\end{cases}
$$
is a required isomorphism between the semigroups $\varphi(X)$ and
$\varphi(L_{n+1})\times_{\varphi(L_n)} \varphi(C_2)$.
\end{proof}

\section{Proof of Theorem~\ref{t1l}}\label{s:t1l}

Given a semigroup $X$, we need to prove the equivalence of the
following statements:
\begin{enumerate}
\item[(1)] $\lambda(X)$ is a commutative Clifford semigroup;
\item[(2)] $\lambda(X)$ is an inverse semigroup;
\item[(3)] the idempotents of $\lambda(X)$ commute and $\lambda(X)$ is sub-Clifford or regular
in $N_2(X)$;
\item[(4)] $X$ is a finite commutative inverse semigroup,
isomorphic to one of the following semigroups:
$C_2$, $C_3$, $C_4$, $C_2\times C_2$, $L_2\times C_2$, $L_1\sqcup C_2$, $L_n$,
or $C_2\sqcup L_n$ for some $n\in\w$.
\end{enumerate}

We shall prove the implications $(4)\Ra(1)\Ra(2)\Ra(3)\Ra(4)$.

The implication $(4)\Ra(1)$ follows from
Propositions~\ref{p4.1}---\ref{p4.3} while $(1)\Ra(2)\Ra(3)$ are
trivial or well-known; see \cite[II.1.2]{Pet}.

To prove that $(3)\Ra(4)$, assume that the idempotents of the
semigroup $\lambda(X)$ commute and $\lambda(X)$ is sub-Clifford or
regular in $N_2(X)$. Then the idempotents of the semigroup $X$
also commute and hence the set $E=\{e\in X:ee=e\}$ of idempotents
of $X$ is a semilattice. For the convenience of the reader we
divide the further proof into a series of claims.

\begin{claim}\label{cl6.1} The semigroup $\lambda(X)$ is sub-Clifford or regular.
\end{claim}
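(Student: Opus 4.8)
The plan is to split on the disjunction in the hypothesis. If $\lambda(X)$ is already sub-Clifford there is nothing to prove, so the entire content of the claim is the implication: if $\lambda(X)$ is regular in $N_2(X)$, then $\lambda(X)$ is regular as a semigroup in its own right. Thus from now on assume $\lambda(X)$ is regular in $N_2(X)$, and fix an arbitrary maximal linked upfamily $\A\in\lambda(X)$.

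By the assumed regularity in $N_2(X)$ there is a linked upfamily $\B\in N_2(X)$ with $\A=\A*\B*\A$. The idea is to replace the witness $\B$ by a \emph{maximal} linked upfamily. Since the union of any chain of linked upfamilies is again linked, Zorn's Lemma provides a maximal linked upfamily $\tilde\B\in\lambda(X)$ with $\B\subset\tilde\B$. The extended operation $*$ on $\upsilon(X)$ is monotone in each variable — this is immediate from the defining formula, since enlarging either factor only enlarges the family of generating sets $\bigcup_{a\in A}a*B_a$ — and therefore
$$\A=\A*\B*\A\subset\A*\tilde\B*\A.$$
On the other hand, $N_2(X)$ is a subsemigroup of $\upsilon(X)$ by \cite{G1}, hence closed under $*$; since $\A,\tilde\B\in\lambda(X)\subset N_2(X)$, the upfamily $\A*\tilde\B*\A$ lies in $N_2(X)$, i.e.\ it is linked. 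Being a linked upfamily that contains the maximal linked upfamily $\A$, it must coincide with $\A$, so $\A=\A*\tilde\B*\A$ with $\tilde\B\in\lambda(X)$. As $\A\in\lambda(X)$ was arbitrary, $\lambda(X)$ is regular, which completes the proof of the claim.

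There is no serious obstacle in this argument; the one point that must not be skipped is that the new witness has to be taken \emph{maximal} linked, not merely linked, because it is precisely the maximality of $\A$ — combined with the closedness of $N_2(X)$ under $*$, which forces $\A*\tilde\B*\A$ to be linked — that turns the inclusion $\A\subset\A*\tilde\B*\A$ into an equality. In particular the argument shows, more generally, that any element of $\lambda(X)$ which is regular in $N_2(X)$ is already regular in $\lambda(X)$.
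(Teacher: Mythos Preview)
Your proof is correct and follows essentially the same approach as the paper: enlarge the linked witness $\B\in N_2(X)$ to a maximal linked $\tilde\B\in\lambda(X)$, use monotonicity to get $\A\subset\A*\tilde\B*\A$, and invoke the maximality of $\A$ (together with $\A*\tilde\B*\A\in N_2(X)$) to force equality. You supply a bit more justification (Zorn, monotonicity of $*$, closure of $N_2(X)$), but the argument is the same.
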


\begin{proof} If $\lambda(X)$ is not sub-Clifford, then it is regular in the semigroup $N_2(X)$ according to our assumption. We claim that $\lambda(X)$ is regular. Given any maximal linked system $\A\in\lambda(X)$, use the regularity of $\lambda(X)$ in $N_2(X)$ to find a linked upfamily $\mathcal B\in N_2(X)$ such that $\A=\A*\mathcal B*\A$. Enlarge $\mathcal B$ to a maximal linked upfamily $\tilde{\mathcal B}\in\lambda(X)$. Then $\A=\A*\mathcal B*\A\subset \A*\tilde{\mathcal B}*\A$ implies that $\A=\A*\tilde{\mathcal B}*\A$ by the maximality of the linked family $\A$. Therefore $\A$ is regular in $\lambda(X)$.
\end{proof}

\begin{claim}\label{cl6.2} The semigroup $X$ is inverse.
\end{claim}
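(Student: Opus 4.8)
The plan is to deduce regularity of $X$ inside $X$ from regularity of $\lambda(X)$, and then combine it with the already-established fact that idempotents of $X$ commute to conclude (via \cite[II.1.2]{Pet}) that $X$ is inverse. By Claim~\ref{cl6.1} the semigroup $\lambda(X)$ is either sub-Clifford or regular, so I split into these two cases.

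First suppose $\lambda(X)$ is regular. Fix $x\in X$ and identify $x$ with the principal ultrafilter $\la x\ra\in\beta(X)\subset\lambda(X)$. By regularity there is $\A\in\lambda(X)$ with $\la x\ra=\la x\ra*\A*\la x\ra$. Now $\la x\ra$ is in particular an element of $\upsilon(X)$, and $\la x\ra$ is regular in $\upsilon(X)$ (the witness $\A$ lies in $\lambda(X)\subset\upsilon(X)$); hence by Proposition~\ref{p3.1} the element $x$ is regular in $X$. Since this holds for every $x\in X$, the semigroup $X$ is regular, and as its idempotents commute, $X$ is inverse.

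Now suppose $\lambda(X)$ is sub-Clifford. Recall that a semigroup is sub-Clifford iff for all $n<m$ and all elements $t$, the equality $t^{n+1}=t^{m+1}$ forces $t^{n}=t^{m}$. Applying this to $t=\la x\ra$ and using that the embedding $X\hookrightarrow\lambda(X)$ is a homomorphism (so $\la x\ra^k=\la x^k\ra$), we see that $X$ itself is sub-Clifford. Combined with Proposition~\ref{p2.1}, which applies because idempotents of $\beta(X)\subset\lambda(X)$ commute, every cyclic subsemigroup of $X$ is finite; a finite sub-Clifford semigroup is Clifford, hence every cyclic subsemigroup of $X$ is a finite Clifford — i.e.\ a union of finite groups, so in fact each cyclic subsemigroup is periodic with its minimal ideal a group. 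In particular every element $x\in X$ lies in a cyclic group, and so $x=x\cdot x^{k}\cdot x$ for a suitable power $x^k$, i.e.\ $x$ is regular in $X$. Again, since the idempotents of $X$ commute, $X$ is inverse by \cite[II.1.2]{Pet}.

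The main obstacle is the sub-Clifford case: one must be careful that ``$\lambda(X)$ sub-Clifford $\Rightarrow$ $X$ sub-Clifford'' genuinely follows from the subsemigroup being a homomorphic copy of $X$ (it does, since the sub-Clifford condition is inherited by subsemigroups), and that finiteness of cyclic subsemigroups of $X$ is available — this is exactly where Proposition~\ref{p2.1} is invoked, using that commutativity of idempotents in $\lambda(X)$ passes down to $\beta(X)$. Once both cases yield regularity of every element of $X$ within $X$, the conclusion is immediate from the characterization of inverse semigroups as regular semigroups with commuting idempotents.
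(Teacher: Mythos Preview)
Your proof is correct and follows essentially the same route as the paper: split into the regular and sub-Clifford cases via Claim~\ref{cl6.1}, use Proposition~\ref{p3.1} in the first case, and in the second case combine Proposition~\ref{p2.1} with the fact that finite sub-Clifford (cyclic) semigroups are groups to obtain regularity of $X$. The only difference is that you spell out a few steps (inheritance of the sub-Clifford property by subsemigroups, the explicit equation $x=x\cdot x^{k}\cdot x$) that the paper leaves implicit.
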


\begin{proof} Since the idempotents of the semigroup $X$ commute, it suffices to check that $X$ is regular. By our assumption, the semigroup $X$ is regular or sub-Clifford. If $\lambda(X)$ is regular, then $X$ is regular by Proposition~\ref{p3.1}. Now assume that $\lambda(X)$ is sub-Clifford. Then so is the semigroup $X$. Since the idempotents of the semigroup $\beta(X)\subset\lambda(X)$ commute, by Proposition~\ref{p2.1}, each cyclic subsemigroup $\{x^n\}_{n\in\IN}$ of $S$ is finite and hence is a group by the sub-Clifford property of $X$. Then the semigroup $X$ is Clifford and hence regular.
\end{proof}

\begin{claim}\label{cl6.3} The semilattice $E\subset X$ is linear and finite.
\end{claim}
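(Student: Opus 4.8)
\textbf{Proof proposal for Claim~\ref{cl6.3}.}

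The plan is to show that $E$ embeds as a subsemigroup into $\lambda(X)$ in such a way that the hypotheses of Claim~\ref{cl6.1}/Claim~\ref{cl6.2} force $E$ to be both linear and finite. First I would observe that, since $X\setminus E$ need not be an ideal, I cannot directly invoke Corollary~\ref{c3.4}; instead I would work with the principal ultrafilters and the semilattice structure. By Claim~\ref{cl6.2} the semigroup $X$ is inverse, so $E$ is a genuine subsemilattice of $X$, and $\beta(E)\subset\beta(X)\subset\lambda(X)$. Since by Claim~\ref{cl6.1} the semigroup $\lambda(X)$ is sub-Clifford or regular, the same property passes to the subsemigroup $\lambda(E)$ (for the regular case one uses Lemma~\ref{l3.3} together with the fact that $E\setminus F$ is an ideal of $E$ for any initial segment, or more simply that $\lambda(E)$ is a closed subsemigroup of $\lambda(X)$ consisting of maximal linked systems supported on $E$; for the sub-Clifford case it is automatic since a subsemigroup of a sub-Clifford semigroup is sub-Clifford).

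Next I would show $E$ is finite. If $E$ were infinite, then being an infinite semilattice it contains an infinite antichain or an infinite chain. An infinite chain is an infinite linear semilattice, which by Proposition~\ref{p2.1} is forbidden since the idempotents of $\beta(E)\subset\beta(X)$ commute. An infinite antichain generates an infinite semilattice all of whose nontrivial products are strictly below both factors; following the argument in \cite{BGs} (Theorem 1.1 there shows every element of $\beta$ of such a semigroup is idempotent), one then produces in $\beta(E)\subset\lambda(X)$ idempotents that are not regular in $\lambda(X)$ — contradicting Claim~\ref{cl6.1}. Either way $E$ is finite. Once $E$ is finite it is automatically a finite semilattice, and a finite semilattice that is not linear contains a copy of $L_2\times L_2$ (two incomparable idempotents with their meet); I would then quote Proposition~\ref{p4.2} or the relevant computation from \cite{BGs} to see that $\lambda(L_2\times L_2)$ is not sub-Clifford and not regular, hence neither is $\lambda(X)\supset\lambda(L_2\times L_2)$, contradicting Claim~\ref{cl6.1}. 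Therefore $E$ is a finite linear semilattice.

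The main obstacle I anticipate is the passage from properties of $\lambda(X)$ to properties of $\lambda(E)$: one must be careful that $E$ is a subsemigroup whose complement is \emph{not} generally an ideal, so Lemma~\ref{l3.3} and Corollary~\ref{c3.4} do not apply verbatim. The clean way around this is to use that $X$ is now known to be an \emph{inverse} (indeed Clifford, by the argument in Claim~\ref{cl6.2}) semigroup, so $X=\bigcup_{e\in E}H_e$ with $E$ central, and in that situation the restriction of $\lambda$-products to upfamilies supported on $E$ behaves like $\lambda(E)$; alternatively, one isolates for each pair of incomparable idempotents $e,f$ a $4$-element subsemigroup $\{e,f,ef\}$ with complement an ideal inside the subsemigroup it generates and applies Corollary~\ref{c3.4} there. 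Getting this localization exactly right, and checking the non-regularity / non-sub-Cliffordness of $\lambda(L_2\times L_2)$, is where the real work lies; the finiteness of chains is immediate from Proposition~\ref{p2.1}.
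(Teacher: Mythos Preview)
Your outline reverses the paper's order and this creates real gaps. The paper proves \emph{linearity first}: from any two incomparable idempotents $x,y\in E$ (so $xy\notin\{x,y\}$) it writes down the explicit maximal linked system
\[
\mathcal L=\big\langle\{x,y\},\{x,xy\},\{y,xy\}\big\rangle\in\lambda(X),
\]
checks that $\mathcal L\ne\mathcal L*\mathcal L=\langle xy\rangle=\mathcal L*\mathcal L*\mathcal L$ (killing the sub-Clifford option), and then by a short element chase shows that $\mathcal L$ is not regular even in $\upsilon(X)$ (killing the regular option). Once $E$ is linear, finiteness is a one-line consequence of Proposition~\ref{p2.1}, since a linear semilattice \emph{is} a linear subsemigroup in the sense used there. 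No localization to $\lambda(E)$, no chain/antichain dichotomy, and no infinite-antichain branch are needed.

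Your route has concrete errors, not just obstacles. The assertion that a non-linear finite semilattice contains a copy of $L_2\times L_2$ is false: the $3$-element semilattice $\{x,y,xy\}$ with $xy\notin\{x,y\}$ is already non-linear and has no $4$-element subsemilattice (your own parenthetical calls $\{e,f,ef\}$ a ``$4$-element subsemigroup''). Citing Proposition~\ref{p4.2} here is also off: that proposition concerns the \emph{linear} semilattices $L_n$ and says their extensions \emph{are} semilattices, the opposite of what you want. More seriously, the localization problem you flag is genuine and unsolved in your sketch: neither $E$ nor $\{x,y,xy\}$ has an ideal complement in $X$, so Corollary~\ref{c3.4} gives no transfer of regularity, and ``$\lambda(E)$ is a closed subsemigroup'' does not by itself imply that a non-regular element of $\lambda(E)$ stays non-regular in $\lambda(X)$. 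The paper's fix is precisely to bypass this: it never passes to $\lambda(E)$ but shows non-regularity of $\mathcal L$ in the ambient $\upsilon(X)$ by a direct computation using only the relations $x^2=x$, $y^2=y$, $xy=yx$. I would drop the chain/antichain and $L_2\times L_2$ scaffolding and carry out that computation instead.
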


\begin{proof} Assuming that $E$ is not linear, we can find two idempotents $x,y\in E$ such that $xy\notin\{x,y\}$. Now consider the maximal linked system $\mathcal L=\la\{x,y\},\{x,xy\},\{y,xy\}\ra$.
It can be shown that $\LL\ne \LL*\LL=\la\{xy\}\ra=\LL*\LL*\LL$,
which is not possible if the semigroup $\lambda(X)$ is
sub-Clifford.

Next, we show that the element $\LL$ is not regular in the
semigroup $\upsilon(X)$, which is not possible if the semigroup
$\lambda(X)$ is regular. Assuming that $\LL$ is regular, find an
upfamily $\A\in\upsilon(X)$ such that $\LL*\A*\LL=\LL$. It follows
from $\{x,y\}\in\LL=\LL*\A*\LL$ that $\{x,y\}\supset \bigcup_{u\in
L}u*B_u$ for some set $L\in\LL$ and some family $\{B_u\}_{u\in
L}\subset\A*\LL$. The linked property of family $\LL$ implies that
the intersection $L\cap\{x,xy\}$ contains some point $u$. Now for
the set $B_u\in\A*\LL$ find a set $A\in\A$ and a family
$\{L_a\}_{a\in A}\subset\LL$ such that $B_u\supset\bigcup_{a\in
A}a*L_a$. Fix any point $a\in A$ and a point $v\in
L_a\cap\{y,xy\}$. Then $uav\in uaL_a\subset uB_u\subset \{x,y\}$.
Since $u\in\{x,xy\}$ and $v\in \{y,xy\}$, the element $uav$ is
equal to $xby$ for some element $b\in\{a,ya,ax,yax\}$. So,
$xby\in\{x,y\}$. If $xby=x$, then $xy=xbyy=xby=x\in\{x,y\}$. If
$xby=y$, then $xy=xxby=xby=y\in\{x,y\}$. In both cases we obtain a
contradiction with the choice of the idempotents $x$ and $y$.

Since the idempotents of the semigroup $\beta(X)\subset\lambda(X)$
commute, the linear semilattice $E$ is finite according to
Proposition~\ref{p2.1}.
\end{proof}

Since $X$ is an inverse semigroup with finite linear semilattice
$E$, we can apply Theorem 7.5 of \cite{CP2} to derive our next claim.

\begin{claim}\label{cl6.4} The semigroup $X$ is inverse and Clifford.
\end{claim}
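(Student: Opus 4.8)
The plan is as follows. By Claims~\ref{cl6.2} and~\ref{cl6.3} we already know that $X$ is inverse and that its semilattice of idempotents $E$ is a finite chain, say $e_0>e_1>\dots>e_{n-1}$; and by the characterization of Clifford inverse semigroups recalled in the introduction it remains only to prove that $x^{-1}x=xx^{-1}$ for every $x\in X$. For $g\in E$ write $Eg=\{h\in E:h\le g\}$ for the principal ideal of $E$ generated by $g$.

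First I would invoke the standard action of an inverse semigroup on its semilattice of idempotents by partial isomorphisms. Fix $x\in X$ and put $e=x^{-1}x$, $f=xx^{-1}$. One checks by a direct computation in $X$ that the map $\theta_x\colon g\mapsto xgx^{-1}$ sends $Ee$ into $Ef$ (indeed $xgx^{-1}$ is idempotent and $\le f$ whenever $g\le e$), that it is a semilattice homomorphism, and that $g\mapsto x^{-1}gx$ is a two-sided inverse for it as a map $Ef\to Ee$; hence $\theta_x$ is an isomorphism of the principal ideal $Ee$ \emph{onto} the principal ideal $Ef$. This is precisely the content of Theorem~7.5 of~\cite{CP2} specialized to our $X$, so in the write-up one may either quote that theorem or include the short verification.

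Then I would use finiteness and linearity of $E$ to close the gap. Since $E$ is a chain, $e=x^{-1}x$ and $f=xx^{-1}$ are comparable, and replacing $x$ by $x^{-1}$ if necessary we may assume $e\le f$, so $Ee\subseteq Ef$. But $\theta_x$ exhibits a bijection between $Ee$ and $Ef$, and a finite set cannot be in bijection with a proper subset of itself; therefore $Ee=Ef$, whence $e=f$, i.e.\ $x^{-1}x=xx^{-1}$. (Concretely: in the chain $e_0>\dots>e_{n-1}$ the ideal $Ee_k$ has exactly $n-k$ elements, so an isomorphism $Ee_i\cong Ee_j$ forces $i=j$.) As $x$ was arbitrary, $X$ is Clifford, and being inverse and Clifford it decomposes as $X=\bigcup_{e\in E}H_e$ with the central semilattice $E$, as recalled in the introduction.

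The only point needing care is that $\theta_x$ maps $Ee$ \emph{onto} $Ef$, not merely into it — that is, that $x^{-1}fx$ lies in $Ee$ for $f=xx^{-1}$ and that $\theta_x(x^{-1}fx)=f$; this is where the inverse-semigroup identities $ge=g$ (for $g\le e$) and $hf=h$ (for $h\le f$) enter. Once surjectivity is established, finiteness of $E$ finishes the argument in one line, so I do not anticipate a genuine obstacle here — which is presumably why the authors are content simply to cite \cite{CP2}.
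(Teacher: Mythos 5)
Your argument is correct, and it is essentially the paper's approach: the paper simply cites Theorem 7.5 of Clifford--Preston for the fact that an inverse semigroup with a finite linear semilattice of idempotents is Clifford, and your conjugation isomorphism $\theta_x\colon Ee\to Ef$ together with the cardinality count $|Ee_k|=n-k$ is exactly the standard proof behind that citation. No gaps.
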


Since the semigroup $X$ is inverse and Clifford, the idempotents
of $X$ commute with all elements of $X$; see Theorem II.2.6 in
\cite{Pet}.

\begin{claim}\label{cl6.5} Each subgroup $H$ in $X$ has cardinality $|H|\le 4$.
\end{claim}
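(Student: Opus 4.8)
The plan is to reduce the statement to the classification of superextensions of \emph{groups}. Since every subgroup of $X$ is contained in a maximal subgroup, it suffices to bound $|H_e|$ for each idempotent $e\in E$, where $H_e=\{x\in X:xx^{-1}=e=x^{-1}x\}$. By Claims~\ref{cl6.3} and~\ref{cl6.4}, the semilattice $E$ is linear and $X=\bigcup_{f\in E}H_f$ is a Clifford semigroup with central idempotents. Put $Z=\bigcup_{f\ge e}H_f$. I would first check that $Z$ is a subsemigroup of $X$ whose complement $X\setminus Z=\bigcup_{f<e}H_f$ is an ideal in $X$: for $x\in H_f$ with $f<e$ and $y\in H_g$ one has $xy,yx\in H_{f\wedge g}$ with $f\wedge g\le f<e$. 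Next I would verify that $r\colon Z\to H_e$, $r(x)=ex$, is a well-defined surjective homomorphism (centrality and idempotency of $e$ give $r(xy)=e\,xy=(ex)(ey)$, and $eH_f=H_e$ for $f\ge e$) which restricts to the identity on $H_e$; thus $r$ is a retraction with section the inclusion $i\colon H_e\hookrightarrow Z$.

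Applying the functor $\upsilon$ and using that $\upsilon i$ is injective ($i$ being injective), we obtain an injective homomorphism $\lambda i\colon\lambda(H_e)\to\lambda(Z)$ with $\lambda r\circ\lambda i=\Id$; hence its image is the retract $(\lambda i\circ\lambda r)(\lambda(Z))$ of $\lambda(Z)$, isomorphic to $\lambda(H_e)$, and it consists of commuting idempotents when restricted to idempotents, since it is a subsemigroup of $\lambda(X)$. By Claim~\ref{cl6.1}, $\lambda(X)$ is sub-Clifford or regular. If $\lambda(X)$ is sub-Clifford, then so is its subsemigroup $\cong\lambda(H_e)$. If $\lambda(X)$ is regular, then by Corollary~\ref{c3.4} the semigroup $\lambda(Z)=\lambda(X)\cap\upsilon(Z)$ is regular, hence so is its retract $\cong\lambda(H_e)$ (a retract of a regular semigroup is regular: applying the retraction homomorphism to $t=tst$ gives $t=t\rho(s)t$ with $\rho(s)$ in the retract). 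In either case $\lambda(H_e)$ has commuting idempotents and is sub-Clifford or regular.

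It remains to show that a group $G$ for which $\lambda(G)$ has commuting idempotents and is sub-Clifford or regular satisfies $|G|\le 4$. For this I would appeal to the analysis of superextensions of groups in \cite{BG10}; the infinite case can be excluded using Proposition~\ref{p2.1} (which forces $G$ to be periodic) together with the group-theoretic arguments there, while for a finite group $G$ with $|G|\ge 5$ one produces a maximal linked system on $G$ that is neither regular in $N_2(G)$ nor satisfies the sub-Clifford identity. This last step is the real content of the claim and the main obstacle; everything before it is routine bookkeeping about Clifford semigroups and the functoriality of $\lambda$.
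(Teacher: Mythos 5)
Your reduction to the group case is sound and runs parallel to what the paper does: since $E$ is linear and the idempotents are central (Claims~\ref{cl6.3} and \ref{cl6.4}), the complement of $Z=\bigcup_{f\ge e}H_f$ is an ideal, $x\mapsto ex$ retracts $Z$ onto $H_e$, and via Corollary~\ref{c3.4} and functoriality of $\lambda$ the semigroup $\lambda(H_e)$ inherits commuting idempotents and the sub-Clifford/regularity alternative from $\lambda(X)$. But the proposal stops exactly where the claim begins: you never prove that a group $G$ for which $\lambda(G)$ has these properties satisfies $|G|\le 4$, and you concede as much (``the real content of the claim and the main obstacle''). An appeal to ``the analysis in \cite{BG10}'' plus the assertion that for $|G|\ge 5$ one ``produces a maximal linked system that is neither regular nor satisfies the sub-Clifford identity'' is not a proof; indeed, for most groups of order at least $5$ the obstruction the paper actually exhibits is a pair of non-commuting idempotents, not a non-regular element, so even the announced strategy is off target for the generic case.

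The paper's argument for this core step is genuinely nontrivial. It considers the maximal \emph{left invariant} linked systems on $H$ and shows there is only one: two distinct such systems $\A_1,\A_2$ generate disjoint closed left ideals ${\uparrow}\A_1,{\uparrow}\A_2$ in the compact right-topological semigroup $\lambda(H)$, each containing an idempotent by Ellis' theorem, and these idempotents cannot commute. Uniqueness of the maximal invariant linked system, by Theorems 2.2 and 2.6 of \cite{BGN}, forces $|H|\le 5$ or $H\cong D_6$ or $H\cong C_2^3$ (this also disposes of infinite $H$; mere periodicity from Proposition~\ref{p2.1} does not). Finally $C_5$, $D_6$ and $C_2^3$ are excluded by explicit constructions: for $C_5$, an element $\Theta$ with $\LL*\Theta=\mathcal Z$ for all $\LL\in\lambda(H)$, shown to be neither regular in $\lambda(X)$ (via the shift by $e$ and Corollary~\ref{c3.4}) nor compatible with sub-Cliffordness; for $D_6$ and $C_2^3$, concrete pairs of non-commuting idempotents. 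None of this is present in your proposal, so the claim remains unproved.
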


\begin{proof} We lose no generality assuming that $H$ coincides with the maximal group $H_e$ containing the idempotent $e$ of the group $H$. An upfamily $\A\in\upsilon(H)$ is called {\em left invariant} if $x\A=\A$ for any point $x\in H$. By $\inv[N]_2(H)$ denote the family of all left invariant linked systems on $H$ and by
$\inv[\lambda](H)=\max\inv[N]_2(H)$ the family of all maximal
elements of $\inv[N]_2(H)$.  Elements of $\inv[\lambda](H)$ are
called {\em maximal invariant linked systems}. Zorn's Lemma
guarantees that the set $\inv[\lambda](H)$ is not empty.

We claim that $\inv[\lambda](H)$ is a singleton. Assuming the
opposite, fix two distinct maximal invariant linked systems
$\A_1,\A_2\in\inv[\lambda](H)$. By Proposition 1 of \cite{BG3},
for every $i\in\{1,2\}$ the set
 $${\uparrow}\A_i=\{\LL\in\lambda(H):\LL\supset\A_i\}$$is a left ideal in the compact right-topological semigroup $\lambda(H)$. By Ellis'  Theorem \cite{Ellis} (see also \cite[2.5]{HS}), this left ideal contains an idempotent $\E_i\supset\A_i$. The idempotents $\E_1,\E_2$ do not commute because the products $\E_1*\E_2$ and $\E_2*\E_1$ belong to the disjoint left ideals ${\uparrow}\A_2$ and ${\uparrow}\A_1$, respectively (the left ideals ${\uparrow}\A_1$ and ${\uparrow}\A_2$ are disjoint by the maximality of the invariant linked systems $\A_1$ and $\A_2$).

Since $|\inv[\lambda](H)|=1$, we can apply Theorems 2.2 and 2.6 of
\cite{BGN} and conclude that the group $H$ either has cardinality
$|H|\le 5$ or else $H$ is isomorphic to the dihedral group $D_6$
or to the group $(C_2)^3$.

To finish the proof of Claim~\ref{cl6.5} it remains to show that
$H$ is not isomorphic to the groups $C_5$, $D_6$ or $C_2^3$.
\smallskip

$C_5$: If $H$ is isomorphic to the 5-element cyclic group $C_5$,
then the superextension $\lambda(X)$ contains an isomorphic copy
of the semigroup $\lambda(C_5)$. By \cite[\S6.4]{BGN}, the
semigroup $\lambda(H)\cong\lambda(C_5)$ contains two distinct
elements $\mathcal Z,\Theta$ such that $\LL*\Theta=\mathcal Z$ for
any maximal linked system $\LL\in\lambda(H)$. This implies that
the element $\Theta$ is not regular in $\lambda(H)$. We claim that
this element is not regular in $\lambda(X)$. Assuming the
converse, find a maximal linked system $\LL\in\lambda(X)$ such
that $\Theta=\Theta*\LL*\Theta$.

Let $e$ be the idempotent of the maximal subgroup $H=H_e$ of $X$.
Since $X$ is inverse and Clifford, the idempotent $e$ lies in the
center of the semigroup $X$, so the shift $s_e:X\to eX$,
$s_e:x\mapsto xe=ex$, is a well-defined homomorphism from the
semigroup $X$ onto its principal ideal $eX=Xe$. Since
$e\Theta=\Theta$, for the maximal linked system
$e\LL\in\lambda(eX)$ we get $\Theta=\Theta*e\LL*\Theta$, which
means that $\Theta$ is regular in the semigroup $\lambda(eX)$.
Since $eX\setminus H_e$ is an ideal in $eX$, Corollary~\ref{c3.4}
implies that the element $\Theta$ is regular in the semigroup
$\lambda(H_e)$, which contradicts the choice of $\Theta$. So,
$\Theta$ is not regular in $\lambda(X)$ and the semigroup
$\lambda(X)$ is not regular.

On the other hand, the property of $\Theta$ guarantees that
$\Theta\not=\mathcal Z=\Theta*\Theta=\Theta*\Theta*\Theta$, which
means that the semigroup $\lambda(X)$ is not sub-Clifford. In both
cases we obtain a contradiction with Claim~\ref{cl6.1}.
\smallskip

$D_6$: Next, assume that $H$ is isomorphic to the dihedral group
$D_6$. In this case $H$ contains an element $a$ of order 3 and
element $b$ of order 2 such that $ba=a^2b$. Consider the maximal
linked systems $\Delta=\langle\{e, a\}, \{e,a^2\}, \{a,
a^2\}\rangle$ and $\Lambda=\langle\{e, b\}, \{e, ab\}, \{e, a,
a^2\}, \{a, b,ab\}, \{a^2,b,ab\}\rangle$. It is easy to
check that $\Delta$ and $\Lambda$ are two non-commuting
idempotents in $\lambda(X)$ (because $\{e,a,ab\}\in\Delta*\Lambda$
and $\{e,a,ab\}\notin\Lambda*\Delta$). So, the semigroups
$\lambda(X)\supset\lambda(H)$ contains two non-commuting
idempotents, which is a contradiction.
\smallskip

$C_2^3$: In the case $H \cong C_2^3$ fix three elements $a,b,c\in
C_2^3$ generating the group $C_2^3$. Consider two maximal linked
systems $\square_b=\langle \{e,a\},\{e,b\},
\{e,ab\},\{a,b,ab\}\rangle$ and $\square_c=\langle
\{e,a\},\{e,c\},\{e,ac\},\{a,c,ac\}\rangle$ and observe that they
are non-commuting idempotents of $\lambda(H)$ (because
$\{e,c,b,ab\}\in \square_b*\square_c$ and $\{e,c,b,ab\}\notin
\square_c*\square_b$).
\end{proof}

\begin{claim}\label{cl6.6} The semigroup $\lambda(X)$ is inverse.
\end{claim}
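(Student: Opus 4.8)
The plan is to deduce Claim~\ref{cl6.6} from the classical description of inverse semigroups as precisely the regular semigroups with commuting idempotents (see \cite[II.1.2]{Pet}). Since, by our standing assumption in~(3), the idempotents of $\lambda(X)$ already commute, the only thing left to verify is that $\lambda(X)$ is regular.

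First I would assemble the structural information obtained so far. By Claim~\ref{cl6.4} the semigroup $X$ is inverse and Clifford, so it decomposes into the union $X=\bigcup_{e\in E}H_e$ of its maximal subgroups indexed by the semilattice $E$ of its idempotents; by Claim~\ref{cl6.3} the semilattice $E$ is finite, and by Claim~\ref{cl6.5} every subgroup $H_e$ has cardinality $\le 4$. Hence $X$ is finite, and consequently the superextension $\lambda(X)\subset\mathcal P(\mathcal P(X))$ is a finite semigroup.

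Next I would invoke Claim~\ref{cl6.1}, which asserts that $\lambda(X)$ is sub-Clifford or regular. In the first case, being finite, $\lambda(X)$ is Clifford (a finite sub-Clifford semigroup is Clifford), and a Clifford semigroup is automatically regular, since each of its elements lies in a subgroup and so satisfies $x\in xSx$. Thus in both cases $\lambda(X)$ is a regular semigroup. Combining this with the commutativity of the idempotents of $\lambda(X)$, we conclude by \cite[II.1.2]{Pet} that $\lambda(X)$ is inverse.

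I do not expect a serious obstacle here: the argument is essentially a bookkeeping exercise assembling Claims~\ref{cl6.1}, \ref{cl6.3}, \ref{cl6.4}, and~\ref{cl6.5}. The only point requiring a moment's care is noticing that $X$, and hence $\lambda(X)$, is finite, since this is exactly what collapses the ``sub-Clifford'' alternative of Claim~\ref{cl6.1} first to ``Clifford'' and then to ``regular''.
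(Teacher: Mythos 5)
Your argument is correct and is essentially identical to the paper's proof: both invoke Claim~\ref{cl6.1} for the dichotomy, use Claims~\ref{cl6.3}--\ref{cl6.5} to conclude that $X$ (hence $\lambda(X)$) is finite so that the sub-Clifford alternative collapses to Clifford and thus to regular, and then cite \cite[II.1.2]{Pet}. No gaps.
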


\begin{proof} By Claim~\ref{cl6.1}, the semigroup $\lambda(X)$ is regular or sub-Clifford. We claim that $\lambda(X)$ is regular. If not, then $\lambda(X)$ is sub-Clifford. Claims~\ref{cl6.3}---\ref{cl6.5}  imply that the semigroup $X$ is finite and so is its superextension $\lambda(X)$. Being finite and sub-Clifford, the semigroup $\lambda(X)$ is Clifford and hence regular. Taking into account that the idempotents of $\lambda(X)$ commute, we conclude that the semigroup $\lambda(X)$ is inverse.
\end{proof}

Since the semilattice $E$ is linear, we can write it as
$E=\{e_1,\dots,e_n\}$ where $e_ie_j=e_i\ne e_j$ for all $1\le
i<j<n$. For every $i\le n$ by $H_i=H_{e_i}$ denote the maximal
subgroup of $X$ that contains the idempotent $e_i$. By
Claim~\ref{cl6.5}, each subgroup $H_i$ has cardinality $|H_i|\le
4$ and hence is commutative. We claim that the semigroup $X$ also
is commutative. Indeed, given any points $x,y\in X$ we can find
numbers $i,j\le n$ such that $x\in H_i$ and $y\in H_j$. We lose no
generality assuming that $i\le j$. Then
$xy=(xe_i)y=x(e_iy)=(ye_i)x=yx\in H_i$, so $X$ is commutative.

\begin{claim}\label{cl6.7} For any $1<i<n$ the maximal subgroup $H_i$ is trivial.
\end{claim}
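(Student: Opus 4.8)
The plan is to argue by contradiction: assuming that $H_i$ is non-trivial for some $i$ with $1<i<n$, I will produce an element of $\lambda(X)$ which is not regular in $\upsilon(X)$, contradicting the fact that $\lambda(X)$ is inverse, hence regular (Claim~\ref{cl6.6}).

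Set-up. Fix $a\in H_i\setminus\{e_i\}$ and put $f:=e_1$, $g:=e_n$. Since $X$ is a Clifford semigroup with linear semilattice $E=\{e_1,\dots,e_n\}$, the maximal idempotent $g$ is the identity of $X$ (so $xg=x$ for all $x$), while $f$ is the minimal idempotent, so that $fx,\,xf\in H_1$ for all $x\in X$ and $H_1$ is the minimal ideal of $X$. As $1<i<n$, the elements $f\in H_1$, $a\in H_i$, $g\in H_n$ are pairwise distinct, and neither $a$ nor $g$ lies in $H_1$. Now let
$$\mathcal M:=\big\langle \{f,a\},\ \{f,g\},\ \{a,g\}\big\rangle .$$
A short verification shows that a subset $A\subset X$ meets each of the three pairs if and only if it contains one of them; hence $\mathcal M=\{A\subset X: A\text{ meets each of }\{f,a\},\{f,g\},\{a,g\}\}$, which is clearly linked and maximal, so $\mathcal M\in\lambda(X)$.

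The heart of the argument is to show $\mathcal M$ is not regular in $\upsilon(X)$. Suppose $\mathcal M=\mathcal M*\mathcal B*\mathcal M$ for some $\mathcal B\in\upsilon(X)$. Unfolding the definition of $*$, the membership $\{a,g\}\in\mathcal M$ provides a set $M\in\mathcal M$, sets $B_m\in\mathcal B$ $(m\in M)$ and sets $A_{m,b}\in\mathcal M$ $(m\in M,\ b\in B_m)$ with
$$\bigcup_{m\in M}\ \bigcup_{b\in B_m}(m b)\cdot A_{m,b}\ \subset\ \{a,g\}.$$
First, $f\notin M$: otherwise, choosing $b\in B_f$, the set $(fb)\cdot A_{f,b}$ is a non-empty subset of the ideal $H_1$, which is disjoint from $\{a,g\}$ — impossible. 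Since $M$ meets $\{f,a\}$ and $\{f,g\}$, this forces $\{a,g\}\subset M$. Now fix $b\in B_a$ (recall $a\in M$), put $t:=ab$ and $A:=A_{a,b}\in\mathcal M$, so $t\cdot A\subset\{a,g\}$. If $f\in A$, then $t\cdot f\in H_1$ would lie in $\{a,g\}$, impossible; hence $f\notin A$ and, as before, $\{a,g\}\subset A$. Since $g$ is the identity and $g\in A$, we get $t=t\cdot g\in t\cdot A\subset\{a,g\}$; but $t=ab$ lies in the maximal subgroup at $e_i e'$, where $b\in H_{e'}$, and $e_i e'\le e_i<e_n$, so $t\neq e_n=g$, whence $t=a$. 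Finally $a\in A$ gives $a^2=a\cdot a=t\cdot a\in t\cdot A\subset\{a,g\}$, which is absurd: $a^2\neq a$ because $a\neq e_i$ has order $\ge2$ in $H_i$, and $a^2\in H_i$ is different from $g\in H_n$ since $i<n$. This contradiction shows that $\mathcal M$ is not regular in $\upsilon(X)$.

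Since $\lambda(X)$ is inverse (Claim~\ref{cl6.6}) it is regular, so $\mathcal M\in\lambda(X)$ is regular in $\lambda(X)$ and a fortiori in $\upsilon(X)$ — contradicting the previous paragraph. Therefore $H_i$ is trivial. I expect the only genuinely non-routine step to be the choice of the witness $\mathcal M$: it must be a maximal linked system built from the bottom idempotent $f$, the identity $g$, and a non-trivial group element $a$ sitting at an intermediate idempotent, arranged so that any equation $\mathcal M=\mathcal M*\mathcal B*\mathcal M$ is driven — through the minimal ideal at the bottom and the identity at the top — down to the impossible requirement $a^2\in\{a,g\}$.
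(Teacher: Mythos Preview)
Your proof is correct and follows the same overall strategy as the paper: build a ``triangle'' maximal linked system on three well-chosen points, show it cannot be regular, and contradict Claim~\ref{cl6.6}. The difference is in the choice of the three points. The paper takes the \emph{adjacent} idempotents $e_{i-1},e_{i+1}$ together with $a$, and first invokes Corollary~\ref{c3.4} to reduce the regularity question to the subsemigroup $\lambda(H_{i-1}\cup H_i\cup H_{i+1})$ before running the combinatorial analysis. You instead take the \emph{extremal} idempotents $f=e_1$ and $g=e_n$; this buys you two structural facts for free --- $g$ is a two-sided identity of $X$ and $H_1$ is the minimal ideal --- which let you carry out the argument directly in $X$ without any preliminary reduction. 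Your route is thus a bit more self-contained; the paper's route, in exchange, keeps all computations inside a small explicit piece of $X$. Both arguments hinge on the same mechanism: the ``low'' idempotent forces it out of the outer factor, the ``high'' idempotent pins down the middle product, and one is driven to the impossible conclusion $a^2\in\{a,g\}$ (in the paper's version, to the impossible membership $\{e_{i+1}\}\in\Delta*\mathcal F$).
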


\begin{proof} Assume conversely that the subgroup $H_i$ is not trivial and take any element $a\in H_i\setminus E$.
Next, consider the maximal linked system
$\Delta=\la\{e_{i-1},a\},\{a,e_{i+1}\},\{e_{i-1},e_{i+1}\}\ra$. We
claim that $\Delta$ is not regular in $\lambda(X)$. Assume
conversely that $\Delta$ is regular in $\lambda(X)$. Using
Corollary~\ref{c3.4}, we can show that $\Delta$ is regular in the
semigroup $\lambda(H_{i-1}\cup H_i\cup H_{i+1})$. Then we can find
a maximal linked system $\F\in\lambda(H_{i-1}\cup H_i\cup
H_{i+1})$ such that $\Delta=\Delta*\F*\Delta$. For the set
$\{a,e_{i+1}\}\in\Delta$, find a set $A\in\Delta*\F$ with
$A\subset H_{i-1}\cup H_i\cup H_{i+1}$ and a family $\{D_a\}_{a\in
A}\subset\Delta$ such that $\{a,e_{i+1}\}\supset\bigcup_{a\in
A}a*D_a$. Observe that such an inclusion is possible only if
$D_a=\{a,e_{i+1}\}$ for all $a\in A$. But then
$A*\{a,e_{i+1}\}\subset\{a,e_{i+1}\}$ implies $A=\{e_{i+1}\}$ and
$\Delta*\F=\la e_{i+1}\ra$ which is not possible.
\end{proof}

\begin{claim}\label{cl6.8} If $n\ge 2$, then $|H_n|\le 2$.
\end{claim}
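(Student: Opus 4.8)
The plan is to argue by contradiction: assume $n\ge 2$ and $|H_n| > 2$. By Claim~\ref{cl6.5} the subgroup $H_n$ has cardinality at most $4$, so the only remaining possibilities are $H_n \cong C_3$, $H_n\cong C_4$, or $H_n\cong C_2\times C_2$. Since $X$ is a finite commutative Clifford semigroup (Claims~\ref{cl6.3}--\ref{cl6.6}) with linear idempotent semilattice $E = \{e_1,\dots,e_n\}$ and $e_ie_j = e_i$ for $i<j$, the idempotent $e_n$ is the zero of the semilattice $E$, hence $e_n x = e_n$ for every idempotent, and more importantly the element $e_{n-1}$ satisfies $e_{n-1} e_n = e_{n-1}$, so $e_{n-1}$ acts as a left identity (and, by commutativity, two-sided identity) on nothing inside $H_n$ — rather, $e_{n-1} h \in H_{n-1}$ for $h\in H_n$. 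The key structural fact I would use is that $H_n$ together with any nontrivial element $a\in H_n\setminus E$ and the idempotent $e_{n-1}$ generates a subsemigroup isomorphic to (a subsemigroup of) $L_1 \sqcup H_n$-type configuration, and I will build an explicit maximal linked system on $\{e_{n-1}\}\cup H_n$ that witnesses a failure of both sub-Cliffordness and regularity of $\lambda(X)$, contradicting Claim~\ref{cl6.1}.

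Concretely, first I would handle $H_n\cong C_3$: write $H_n = \{e_n, a, a^2\}$ and consider the maximal linked system $\Delta = \la \{e_{n-1}, a\}, \{e_{n-1}, a^2\}, \{a, a^2\}\ra$ on the subsemigroup $H_{n-1}\cup H_n$ (noting $e_{n-1}a = e_{n-1}a^2 = e_{n-1}\in H_{n-1}$, so that $e_{n-1}$ is absorbing on the $H_n$-side). Using Corollary~\ref{c3.4} applied to the subsemigroup $Z = H_{n-1}\cup H_n$ (whose complement is an ideal, since $e_{n-1}$ is above $e_n$ in the linear order so $X\setminus Z$ consists of the "upper" groups $H_1,\dots,H_{n-2}$, which form an ideal), it suffices to show $\Delta$ is neither regular in $\lambda(Z)$ nor fits into a sub-Clifford structure there. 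The computation $\Delta * \Delta$ should collapse: every product $\bigcup_{u}u*D_u$ with $u\in\{a,a^2\}$ and $D_u\in\Delta$ either equals $H_n$ or lands in $\la e_{n-1}\ra$, forcing $\Delta*\Delta = \la H_n\ra$ or $\la\{e_{n-1}\}\ra$, and then $\Delta*\Delta*\Delta$ stabilizes at that value, so $\Delta \ne \Delta*\Delta = \Delta*\Delta*\Delta$, killing sub-Cliffordness. For non-regularity I would mimic the argument in Claim~\ref{cl6.7}: if $\Delta = \Delta*\F*\Delta$, then analyzing which sets $A\in\Delta*\F$ can satisfy $\{a\}\supset \bigcup_{a'\in A}a'*D_{a'}$ (using that $\Delta$ is linked, so $A$ meets $\{e_{n-1},a\}$ etc.) forces $\Delta*\F$ to be a principal ultrafilter, contradiction. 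The cases $H_n\cong C_4$ and $H_n\cong C_2\times C_2$ are analogous: pick $a$ of order $>1$ in $H_n$, form the same kind of $\Delta$ on $\{e_{n-1}\}\cup H_n$ mixing $e_{n-1}$ with two elements of $H_n$, and run the same two-pronged contradiction.

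The main obstacle I anticipate is the bookkeeping in the non-regularity argument: one must verify carefully that no maximal linked $\F$ on $Z$ (equivalently, after pushing down via $e_{n-1}$ and Corollary~\ref{c3.4}, no maximal linked system on $H_n\cup\{e_{n-1}\}$) can produce $\Delta*\F*\Delta = \Delta$, and this requires checking that the "basic form" sets $\bigcup_{u\in U}u*V_u$ representing elements of $\Delta*\F$ are too coarse — they always contain a full coset of $H_n$ or collapse to $\{e_{n-1}\}$, so they can never be squeezed back down to a two-element set like $\{e_{n-1},a\}\in\Delta$. Once this coarseness is established, both conclusions follow, and combined with Claim~\ref{cl6.1} (which says $\lambda(X)$ is sub-Clifford or regular) we reach the desired contradiction, so $|H_n|\le 2$.
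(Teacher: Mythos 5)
You have the right general strategy --- exhibit a maximal linked system supported on $\{e_{n-1}\}\cup H_n$ and show it cannot be regular in $\lambda(X)$ --- but the proposal as written has concrete gaps. (i) The structural claims it rests on are wrong or unproved. With $e_ie_j=e_i$ for $i<j$, the idempotent $e_n$ is the top (identity) of $E$, not its zero; more importantly, the assertion $e_{n-1}a=e_{n-1}a^2=e_{n-1}$ says that the group homomorphism $H_n\to H_{n-1}$, $h\mapsto e_{n-1}h$, is trivial. For $n\ge3$ this follows from Claim~\ref{cl6.7} (then $H_{n-1}$ is trivial), but for $n=2$ nothing proved so far prevents $H_1$ from being, say, $C_3$ with $h\mapsto e_1h$ injective; so ``$e_{n-1}$ is absorbing on the $H_n$-side'' is a genuine gap precisely in the hardest case. (ii) The asserted collapse $\Delta*\Delta=\la H_n\ra$ or $\la\{e_{n-1}\}\ra$ is false. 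Since $\lambda(X)$ is a subsemigroup of $\upsilon(X)$, the product $\Delta*\Delta$ is maximal linked, which $\la H_n\ra$ is not; and even granting $e_{n-1}H_n=\{e_{n-1}\}$, for $H_n=\{e_n,a,a^2\}\cong C_3$ one gets $H_n=a*\{a,a^2\}\cup a^2*\{a,a^2\}\in\Delta*\Delta$ (so $\Delta*\Delta\ne\la\{e_{n-1}\}\ra$) and $\{e_{n-1},e_n\}=a*\{e_{n-1},a^2\}\cup a^2*\{e_{n-1},a\}\in\Delta*\Delta$ (so $\Delta*\Delta\ne\la H_n\ra$). Consequently the ``coarseness'' principle you invoke for the non-regularity step --- that basic sets of $\Delta*\F$ always contain a coset of $H_n$ or collapse to $\{e_{n-1}\}$ --- is also false, and the non-regularity argument, which you yourself identify as the crux, is never actually carried out.

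For comparison: since Claim~\ref{cl6.6} already shows that $\lambda(X)$ is inverse, hence regular, only a non-regularity witness is needed, so the sub-Clifford branch you plan for is moot. The paper's witness in the cyclic case is $\Delta=\la\{a,e_{n-1}\},\{a,e_n\},\{e_{n-1},e_n\}\ra$, whose crucial feature is that the small set $\{a,e_n\}$ lies inside $H_n$: in any inclusion $\bigcup_{x\in A}x*D_x\subset\{a,e_n\}$ every product $xe_{n-1}$ lands in some $H_j$ with $j\le n-1$, hence outside $H_n$, so each minimal $D_x$ must equal $\{a,e_n\}$; then $A*\{a,e_n\}\subset\{a,e_n\}$ forces $A=\{e_n\}$ because $a$ has order $3$ or $4$, and unwinding $\{e_n\}\in\Delta*\F$ produces two disjoint members $\{e_n\}$ and $\{a^{-1}\}$ of $\F$, contradicting linkedness. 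None of these steps requires knowing how $e_{n-1}$ acts on $H_n$. To repair your version you would either have to switch to such a witness, or separately dispose of the case $n=2$ with $H_1$ nontrivial, and in either case actually perform the elimination of all candidate sets $A\in\Delta*\F$ rather than appeal to the unproven coarseness claim.
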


\begin{proof} Assume conversely that $|H_n|>2$. Then two cases are possible.
\smallskip

1. The group $H_n$ is cyclic. Fix a generator $a$ of the cyclic
group $H_n$ and consider the maximal linked system $\Delta=\la
\{a,e_{n-1}\},\{a,e_n\},\{e_{n-1},e_n\}\ra$. We claim that
the element $\Delta$ is not regular in the semigroup $\lambda(X)$.
Assuming the opposite, we can find a maximal linked system
$\F\in\lambda(X)$ with $\Delta*\F*\Delta=\Delta$. Then for the set
$\{e_n,a\}\in\Delta$ we can find a set $A\in\Delta*\F$ and a
family $\{D_a\}_{a\in A}$ of minimal subsets of $\Delta$ such that
$\{e_n,a\}\supset \bigcup_{a\in A}a*D_a$. This inclusion is
possible only if $\{a,e_n\}=D_a$ for all $a\in A$. The inclusion
$A*\{a,e_n\}\subset\{a,e_n\}$ implies that $A=\{e_n\}$. Now for
the set $A\in\Delta*\F$, find a minimal set $D\in\Delta$ and a
family $\{F_d\}_{d\in D}\subset\F$ such that $\bigcup_{d\in
D}d*F_d\subset A=\{e_n\}$. This inclusion is possible only if
$\{a,e_n\}\subset D\subset H_n$ and $A_d=\{d^{-1}\}\subset H_n$
for each $d\in D$. Then the family $\A$ contains two disjoint sets
$\{e_n\}$ and $\{a^{-1}\}$ which is not possible as $\A$ is
linked.
\smallskip

2. The group $H_n$ is isomorphic to the group $C_2\times C_2$.
Then we can take two distinct elements $a,b$ generating the group
$H_n$, and consider the maximal linked system
$\square=\la\{e_{n-1},a\},\{e_{n-1},b\},\{e_{n-1},ab\},\{a,b,ab\}\ra$.
We claim that the element $\square$ is not regular in the
semigroup $\lambda(X)$. Assuming the opposite, find a maximal
linked system $\F\in\lambda(X)$ with $\square*\F*\square=\square$.
Then for the set $\{a,b,ab\}\in\square$ we can find a set
$A\in\square*\F$ and a family $\{D_x\}_{x\in A}$ of minimal
subsets of $\square$ such that $\{a,b,ab\}\subset \bigcup_{x\in
A}x*D_x$. This inclusion is possible only if $D_x=\{a,b,ab\}$ for
all $x\in A$. The inclusion $ A*\{a,b,ab\}=\bigcup_{x\in
A}x*D_x\subset\{a,b,ab\}$ implies that $A=\{e_n\}$. Now for the
set $A\in\square*\F$, find a minimal set $S\in\Box$ and a family
$\{F_s\}_{s\in S}\subset\F$ such that $\bigcup_{s\in
S}s*F_s\subset A=\{e_n\}$. This inclusion is possible only if
$S=\{a,b,ab\}$ and $F_s=\{s\}\subset H_n$ for each $d\in D$. Then
the family $\F$ contains disjoint sets $\{s\}$, $s\in S$, which is
not possible as $\F$ is linked.
\end{proof}

\begin{claim}\label{cl6.9} If $n\ge 3$, then the group $H_n$ is trivial.
\end{claim}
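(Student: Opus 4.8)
The plan is to reduce the question to a single $4$-element semigroup, $L_2\sqcup C_2$, and to exhibit in its superextension two idempotents that do not commute. By the preceding claims, $X$ is a finite Clifford inverse semigroup with linear semilattice $E=\{e_1,\dots,e_n\}$, where $e_ie_j=e_i\ne e_j$ for $i<j$, with $H_i=\{e_i\}$ for $1<i<n$ by Claim~\ref{cl6.7}, and with $|H_n|\le 2$ by Claim~\ref{cl6.8}. Arguing by contradiction, I assume $|H_n|=2$ and write $H_n=\{e_n,a\}$ with $a^2=e_n$.

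First I would pass to a suitable Rees quotient. The set $I=\bigcup_{i=1}^{n-2}H_i$ is a proper ideal of $X$: since $X$ is Clifford, the product of an element of $H_i$ with any element of $X$ lies in some $H_j$ with $j\le i\le n-2$. Let $q:X\to X/I$ be the quotient homomorphism and let $0$ be the image of $I$. Then $X/I=\{0\}\cup H_{n-1}\cup H_n$, and since $H_{n-1}=\{e_{n-1}\}$ by Claim~\ref{cl6.7} (here we use $n\ge 3$), this is the $4$-element semigroup $\{0,e_{n-1},e_n,a\}$ in which $0$ is a zero, $e_{n-1}e_n=e_{n-1}=e_{n-1}a$, and $\{e_n,a\}\cong C_2$; inspecting this table shows $X/I\cong L_2\sqcup C_2$, with $\{0,e_{n-1}\}$ the bottom $L_2$ and $\{e_n,a\}$ the top $C_2$. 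The induced homomorphism $\lambda q:\lambda(X)\to\lambda(X/I)=\lambda(L_2\sqcup C_2)$ is surjective. Since $X$ is finite, $\lambda(X)$ is finite, so every element of $\lambda(X)$ has an idempotent power; hence every idempotent of $\lambda(L_2\sqcup C_2)$ is the $\lambda q$-image of an idempotent of $\lambda(X)$, and therefore the idempotents of $\lambda(L_2\sqcup C_2)$ commute, because the idempotents of $\lambda(X)$ do.

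It then remains to contradict this by producing two non-commuting idempotents of $\lambda(L_2\sqcup C_2)$. Write $L_2\sqcup C_2=\{0,1,e,a\}$ with $0$ a zero, $1e=1a=1$, and $a^2=e$, and put
$$\triangle_0=\la\{1,e\},\{1,a\},\{e,a\}\ra\quad\text{and}\quad \triangle_1=\la\{0,e\},\{0,a\},\{e,a\}\ra.$$
Both $\{1,e,a\}$ and $\{0,e,a\}$ are subsemigroups of $L_2\sqcup C_2$ isomorphic to $L_1\sqcup C_2$, and $\triangle_0$, $\triangle_1$ are the images of the (idempotent) middle element of $\lambda(L_1\sqcup C_2)\cong L_1\sqcup L_1\sqcup C_2$ (Proposition~\ref{p4.1}(5)) under the embeddings of $\lambda(\{1,e,a\})$ and $\lambda(\{0,e,a\})$ into $\lambda(L_2\sqcup C_2)$; in particular $\triangle_0$ and $\triangle_1$ are idempotents. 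A direct computation with the multiplication formula for upfamilies gives $\triangle_0*\triangle_1=\triangle_1$ and $\triangle_1*\triangle_0=\triangle_0$, and since $\triangle_0\ne\triangle_1$ (for instance $\{1,e\}\in\triangle_0\setminus\triangle_1$) these idempotents do not commute. This contradiction shows that $|H_n|=1$.

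The only step that requires genuine work---hence the main obstacle---is the verification that $\triangle_0$ and $\triangle_1$ do not commute in $\lambda(L_2\sqcup C_2)$: one checks that the minimal members of $\triangle_0*\triangle_1$ are precisely $\{0,e\},\{0,a\},\{e,a\}$ while those of $\triangle_1*\triangle_0$ are $\{1,e\},\{1,a\},\{e,a\}$. All the remaining ingredients---that $I$ is an ideal, that $X/I\cong L_2\sqcup C_2$, that $\lambda q$ is a surjective semigroup homomorphism, and that commutativity of idempotents is inherited by a surjective image of a finite semigroup---are routine.
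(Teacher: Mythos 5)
Your argument is correct, but it reaches the contradiction by a different route than the paper does. The paper works directly inside $\lambda(X)$: it exhibits the two explicit maximal linked systems $\square=\la\{e_{n-2},a\},\{e_{n-2},e_{n-1}\},\{e_{n-2},e_n\},\{a,e_{n-1},e_n\}\ra$ and $\Delta=\la\{e_{n-1},a\},\{e_{n-1},e_n\},\{a,e_n\}\ra$ and verifies by a direct product computation that they are non-commuting idempotents, contradicting the standing hypothesis. You instead collapse the ideal $I=\bigcup_{i\le n-2}H_i$ (an ideal since $H_iH_j\subset H_{\min(i,j)}$ in the commutative Clifford semigroup $X$), identify the Rees quotient with $L_2\sqcup C_2$ using Claims~\ref{cl6.7} and \ref{cl6.8}, transport commutativity of idempotents from $\lambda(X)$ to $\lambda(L_2\sqcup C_2)$ along the surjective homomorphism $\lambda q$ (lifting idempotents via finiteness of $\lambda(X)$, which is legitimate at this stage by Claims~\ref{cl6.3}--\ref{cl6.5}), and then produce two non-commuting triangles there. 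I checked your key computation and it is right: with the minimal sets as you list them, $\triangle_0*\triangle_1=\triangle_1\ne\triangle_0=\triangle_1*\triangle_0$, and both systems are idempotent by Proposition~\ref{p4.1}(5) applied to the subsemigroups $\{1,e,a\}$ and $\{0,e,a\}$. Your reduction costs two routine ingredients the paper never states explicitly, namely that $\lambda$ of a surjective semigroup homomorphism is again a surjective semigroup homomorphism (standard; cf. \cite{G2}) and that idempotents lift along surjections of finite semigroups; what it buys is a computation confined to one fixed four-element semigroup, and in particular it sidesteps the interaction of $a$ with $H_{n-2}$ when $n=3$, where $H_{n-2}=H_1$ may still be non-trivial at this point (Claim~\ref{cl6.10} comes later) and the product $e_{n-2}a$ need not equal $e_{n-2}$ --- a case the paper's displayed identity $\Delta*\square=\square$ tacitly assumes away, even though its non-commutativity conclusion survives. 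So your proof is a valid, and in this respect slightly more robust, alternative to the paper's.
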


\begin{proof} Assume that $H_n$ is not trivial. By Claim~\ref{cl6.8}, $|H_n|=2$. Fix a generator $a$ of the cyclic group $H_n$ and observe that the maximal linked systems  $\square=\la\{e_{n-2},a\},\{e_{n-2},e_{n-1}\},\{e_{n-2},e_n\},\{a,e_{n-1},e_n\}\ra$ and
$\Delta=\la\{e_{n-1},a\},\{e_{n-1},e_n\},\{a,e_n\}\ra$ are
non-commuting idempotents of the se\-migroup $\lambda(X)$ because
$$\square*\Delta=\la\{e_{n-1},e_{n-2}\},\{e_{n-1},a\},\{e_{n-1},e_n\},\{e_{n-2},a,e_n\}\ra\neq\square=
\Delta*\square.$$
\end{proof}

\begin{claim}\label{cl6.10} If $n\ge 2$, then $|H_1|\le 2$.
\end{claim}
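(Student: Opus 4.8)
The plan is to imitate the non-regularity arguments of Claims~\ref{cl6.8} and \ref{cl6.9}, now exploiting that $H_1$ is the kernel (minimal ideal) of $X$. Two structural observations drive everything. First, since $e_1$ is the least element of the linear semilattice $E$, for every $x\in X$, say $x\in H_{e_j}$, and every $h\in H_1$ we have $xh\in H_{e_j}H_{e_1}\subset H_{e_je_1}=H_{e_1}=H_1$; hence $H_1$ is an ideal of $X$. Second, for every $h\in H_1$ we get $e_2h=e_2e_1h=e_1h=h$, so $e_2$ acts as an identity on $H_1$.

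Assume, towards a contradiction, that $|H_1|>2$, and choose two distinct elements $p,q\in H_1\setminus\{e_1\}$ (this is the only place the hypothesis $|H_1|>2$ is used). Since $H_1\cap H_2=\emptyset$, the points $p,q,e_2$ are pairwise distinct, so
$$\Delta=\la\{p,q\},\{p,e_2\},\{q,e_2\}\ra$$
is a genuine triangle, hence a maximal linked system on $X$, i.e.\ $\Delta\in\lambda(X)$. I claim that $\Delta$ is not regular in $\lambda(X)$, which contradicts Claim~\ref{cl6.6} and finishes the proof.

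Suppose $\Delta=\Delta*\F*\Delta$ for some $\F\in\lambda(X)$. From $\{p,e_2\}\in\Delta*\F*\Delta$ we obtain a set $A\in\Delta*\F$ and minimal sets $\{D_x\}_{x\in A}\subset\Delta$ (so each $D_x$ is one of $\{p,q\},\{p,e_2\},\{q,e_2\}$) with $\bigcup_{x\in A}x*D_x\subset\{p,e_2\}$. The first step is to show $A\cap H_1=\emptyset$. If $x\in A\cap H_1$, then, using that $x$ times an element of $H_1$ stays in $H_1$ and that $x*e_2=x\in H_1$, the nonempty set $x*D_x$ lies in $H_1\cap\{p,e_2\}=\{p\}$, so $x*D_x=\{p\}$; examining the three possibilities for $D_x$ forces $p=q$, or $p^2=p$, or $pq=p$, each impossible by the choice of $p,q$. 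Knowing $A\cap H_1=\emptyset$, we apply the definition of $*$ once more to $A\in\Delta*\F$: there are a minimal $D\in\Delta$ and a family $\{F_d\}_{d\in D}\subset\F$ with $\bigcup_{d\in D}d*F_d\subset A$; since $D$ contains one of $p,q\in H_1$ and $H_1$ is an ideal, the nonempty set $d*F_d$ lies in $H_1$, contradicting $A\cap H_1=\emptyset$. Hence $\Delta$ is not regular in $\lambda(X)$.

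The one delicate point is the choice of $\Delta$. The naive imitation of Claim~\ref{cl6.8}, a triangle on $\{e_1,e_2,a\}$ for a generator $a$ of $H_1$, fails: since $e_1$ is absorbing among idempotents ($e_1e_2=e_1$) and $a$ sends both $e_1$ and $e_2$ to $a$, the product $\Delta*\F*\Delta$ cannot be pushed off the ideal $H_1$ and no contradiction arises. Replacing $e_1$ by a second non-identity element of $H_1$ makes every minimal set of $\Delta$ live, up to the one external point $e_2$, inside the ideal $H_1$, which is what makes the argument close; this single argument then handles $H_1\in\{C_3,\,C_4,\,C_2\times C_2\}$ simultaneously and for all $n\ge 2$, while the verification that $\Delta$ is maximal linked and the case check for $A\cap H_1=\emptyset$ are routine.
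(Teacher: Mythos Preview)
Your proof is correct and follows essentially the same strategy as the paper: the same triangle $\Delta=\la\{p,q\},\{p,e_2\},\{q,e_2\}\ra$ on two non-identity elements of $H_1$ together with $e_2$, and the same two-step unwinding of $\Delta=\Delta*\F*\Delta$ to reach a contradiction. The only cosmetic differences are that the paper first replaces $\F$ by $e_2*\F$ so as to work inside $\lambda(H_1\cup H_2)$ and then asserts the stronger conclusion $A=\{e_2\}$, whereas you skip that reduction and derive only $A\cap H_1=\emptyset$; since every minimal set of $\Delta$ meets $H_1$ and $H_1$ is an ideal, your weaker conclusion already suffices, and your case analysis for $x\in A\cap H_1$ makes explicit a step the paper leaves to the reader.
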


\begin{proof}

Assume that $|H_1|>2$ and chose two distinct elements $a,b\in
H_1\setminus E$. We claim that the maximal linked system
$$\Delta=\big\la\{a,b\},\{a,e_2\},\{b,e_2\}\big\ra$$ is
not regular element of $\lambda(X)$. Assuming the opposite, find a
maximal linked system $\F\in\lambda(X)$ such that
$\Delta*\F*\Delta=\Delta$. Replacing $\F$ by $e_2*\F$, if
necessary, we can assume that $\F\in\lambda(H_1\cup H_2)$. For the
set $\{a,e_2\}\in\Delta$, find a set $A\in\Delta*\F$ and a family
$\{D_x\}_{x\in A}\subset\Delta$ such that $\bigcup_{x\in
A}x*D_x\subset\{a,e_2\}$. This inclusion implies that
$A=\{e_2\}\in\Delta*\F$, which is not possible.
\end{proof}

Now we are able to finish the proof of Theorem~\ref{t1l}. If
$n=|E|\ge 3$, then the semigroup $X$ is isomorphic to $L_n$ or to
$C_2\sqcup L_{n-1}$ by Claims~\ref{cl6.7}---\ref{cl6.10}. If
$n=|E|=1$, then $X$ is a group of cardinality $|X|\le 4$,
isomorphic to one of groups: $L_1$, $C_2$, $C_3$, $C_4$,
$C_2\times C_2$.

It remains to consider the case $|E|=2$. By Claims~\ref{cl6.8},
\ref{cl6.10}, $\max\{|H_1|,|H_2|\}\le 2$. If $|H_1|=|H_2|=1$,
then $X\cong L_2$. If $|H_1|=1$ and $|H_2|=2$, then $X\cong
L_1\sqcup C_2$. If $|H_1|=2$ and $|H_2|=1$, then $X\cong C_2\sqcup
L_1$.

Finally assume that $|H_1|=|H_2|$. For $i\in\{1,2\}$ let $a_i$ be
the unique generator of the 2-element cyclic group $H_i$.

\begin{claim}\label{cl6.11} $a_2*e_1=a_1$.
\end{claim}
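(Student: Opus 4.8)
We are in the case $|E|=2$, so $E=\{e_1,e_2\}$ with $e_1 e_2 = e_1 \neq e_2$, and $|H_1|=|H_2|=2$, so $X = \{e_1, a_1, e_2, a_2\}$ where $a_i$ is the order-$2$ generator of $H_i$. We know $X$ is a commutative Clifford inverse semigroup, and since $e_1$ is the minimal idempotent it acts as a zero on $E$; as $X$ is Clifford with $e_1$ central, the shift $s_{e_1}\colon X \to e_1 X = H_1$ is a homomorphism. Since $a_2 \in H_2$ and $s_{e_1}(a_2) = a_2 e_1 \in H_1$, and $s_{e_1}$ restricted to $H_2$ is a group homomorphism $H_2 \to H_1$ (both cyclic of order $2$), the product $a_2 e_1$ is either $e_1$ or $a_1$. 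So the only content of the claim is to rule out $a_2 e_1 = e_1$.

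**The plan.** The strategy is to show that if $a_2 * e_1 = e_1$, then $\lambda(X)$ contains a non-regular element (contradicting Claim~\ref{cl6.1}), or a pair of non-commuting idempotents (contradicting our standing assumption). Under the hypothesis $a_2 e_1 = e_1$, the subsemigroup $H_1 \cup H_2 = X$ has the structure of a disjoint ordered union-like object: multiplication by anything in $H_1$ collapses everything to $H_1$ (since $e_1$ is a zero for $E$ and $a_2 e_1 = e_1$), while $H_2$ sits "on top" as a group. I would write out the full $4\times 4$ multiplication table under this hypothesis and then exhibit an explicit maximal linked system on the $4$-element set $X$ — in the spirit of the systems $\Delta$, $\square$ used in Claims~\ref{cl6.7}--\ref{cl6.10} — built from sets that straddle $H_1$ and $H_2$, for instance $\Delta = \langle \{a_1, a_2\}, \{a_1, e_2\}, \{a_2, e_2\}\rangle$ or a variant using $\{a_2, e_1\}$. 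The point is that multiplication pushes the relevant sets down into $H_1$ in a degenerate way, so that $\Delta * \F$ is forced to be a principal ultrafilter $\langle s \rangle$ for some $s \in H_1$ no matter what $\F \in \lambda(X)$ is chosen, making $\Delta$ non-regular; simultaneously one checks $\Delta * \Delta * \Delta \neq \Delta$ to kill the sub-Clifford alternative. This is exactly the pattern executed in Claims~\ref{cl6.8} and~\ref{cl6.10}.

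**Key steps in order.** First, invoke that $X$ is inverse and Clifford with central idempotents (already established, via Claim~\ref{cl6.4} and \cite[II.2.6]{Pet}) to get the homomorphism $s_{e_1}\colon X \to H_1$ and conclude $a_2 e_1 \in \{e_1, a_1\}$. Second, assume for contradiction $a_2 e_1 = e_1$ and record the complete multiplication table of $X$. Third, choose the candidate maximal linked system $\Delta$ on $X$, verify it is genuinely maximal linked (on a $4$-element set this is a short finite check, using that any two of its generating sets meet). Fourth, suppose $\Delta = \Delta * \F * \Delta$ for some $\F \in \lambda(X)$ and trace through: for a suitably chosen set $W \in \Delta$, the inclusion $W \supseteq \bigcup_{x \in A} x * D_x$ with $A \in \Delta * \F$, $\{D_x\} \subset \Delta$, forces the $D_x$ to be a specific minimal member of $\Delta$, hence $A * W$ lies inside $W$, hence $A$ is a singleton $\{s\}$; then unwinding $A \in \Delta * \F$ one set deeper forces $\F$ to contain two disjoint sets, contradicting linkedness of $\F$. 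Fifth, separately compute $\Delta * \Delta$ and $\Delta * \Delta * \Delta$ under the assumed table and check $\Delta * \Delta * \Delta \neq \Delta$, so $\lambda(X)$ is not sub-Clifford either — contradicting Claim~\ref{cl6.1}. Therefore $a_2 e_1 = a_1$.

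**Main obstacle.** The routine part is the homomorphism argument giving the dichotomy $a_2 e_1 \in \{e_1, a_1\}$; that is immediate. The real work — and the only place an error could hide — is picking the right linked system $\Delta$ and verifying that $\Delta * \F$ is forced to degenerate for \emph{every} $\F \in \lambda(X)$, not just for ultrafilters. One must be careful that the generating sets of $\Delta$ are chosen so that under the collapsing multiplication (where $H_1$ absorbs $H_2$) the only way the image $\bigcup_x x * D_x$ fits inside the target set $W \in \Delta$ is the degenerate one; with the wrong choice of $\Delta$ there may be enough room for a genuine idempotent $\F$ to realize $\Delta$ as regular. I expect the correct $\Delta$ to be the analogue of the one in Claim~\ref{cl6.10}, namely $\Delta = \langle \{a_1, a_2\}, \{a_1, e_2\}, \{a_2, e_2\}\rangle$ with the reduction $\F \mapsto e_1 * \F$ used to assume $\F \in \lambda(H_1 \cup H_2) = \lambda(X)$, so that this step is formally identical to the arguments already carried out above and can be cited or copied nearly verbatim.
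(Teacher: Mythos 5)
Your reduction to ruling out $a_2*e_1=e_1$ via the homomorphism $s_{e_1}$ is fine and matches the paper's (implicit) first step. But the strategy you commit to in the detailed steps --- exhibit a maximal linked system that is non-regular in $\lambda(X)$, and separately show $\lambda(X)$ is not sub-Clifford --- cannot succeed here. Under the hypothesis $a_2e_1=e_1$ the semigroup is $X\cong C_2\sqcup C_2$ with $H_1$ absorbing $H_2$ (indeed $xy=x$ for all $x\in H_1$, $y\in H_2$, and $e_2$ acts as a two-sided identity on all of $X$), and a direct check of all $12$ elements of $\lambda(X)$ shows that every element $\F$ satisfies $\F^3=\F$: the four ultrafilters and the elements $\triangle_{e_2},\square_{a_2}$ generate two-element groups, and the remaining six elements are idempotents. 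So $\lambda(C_2\sqcup C_2)$ is a Boolean, hence Clifford, hence regular and sub-Clifford semigroup, and no choice of $\Delta$ can witness non-regularity or failure of sub-Cliffordness. In particular your proposed candidate $\Delta=\la\{a_1,a_2\},\{a_1,e_2\},\{a_2,e_2\}\ra=\triangle_{e_1}$ is itself an idempotent: e.g.\ $a_1*\{a_2,e_2\}=\{a_1\}$ and $a_2*\{a_1,e_2\}=\{a_1,a_2\}$, so $\{a_1,a_2\}\in\Delta*\Delta$, and similarly for the other two generating sets; taking $\F=\Delta$ then realizes $\Delta=\Delta*\F*\Delta$, so the ``trace-through to two disjoint sets in $\F$'' step of your plan must break down.

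The only defect of $\lambda(C_2\sqcup C_2)$ is the one you mention in passing but do not pursue: it has non-commuting idempotents. This is exactly what the paper exploits, exhibiting the two idempotents $\square_{e_1}=\la\{e_1,a_1\},\{e_1,a_2\},\{e_1,e_2\},\{a_1,a_2,e_2\}\ra$ and $\square_{a_1}=\la\{a_1,e_1\},\{a_1,e_2\},\{a_1,a_2\},\{e_1,e_2,a_2\}\ra$ and checking $\square_{e_1}*\square_{a_1}=\square_{a_1}\ne\square_{e_1}=\square_{a_1}*\square_{e_1}$, contradicting the standing hypothesis that the idempotents of $\lambda(X)$ commute. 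To repair your proof you would need to switch to this (or an equivalent) non-commutativity argument; the regularity/sub-Clifford route, which does work in Claims~\ref{cl6.7}--\ref{cl6.10}, is closed off in this final case.
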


\begin{proof} Assuming that $a_2*e_1\ne a_1$, we get $a_2*e_1=e_1$. Then the maximal linked systems
$$\square_e=\la\{e_1,a_1\},\{e_1,a_2\},\{e_1,e_2\},\{a_1,a_2,e_2\}\ra\mbox{ \ and \ }
$$ $$\square_a=\la\{a_1,e_1\},\{a_1,e_2\},\{a_1,a_2\},\{e_1,e_2,a_2\}\ra$$
are not commuting idempotents of $\lambda(X)$ (because
$\square_e*\square_a=\square_a$ while
$\square_a*\square_e=\square_e$).
\end{proof}

Claim~\ref{cl6.11} implies that the semigroup $X=H_1\cup H_2$ is
isomorphic to $L_2\times C_2$.

\section{Proof of Theorem~\ref{t1f}}\label{s:t1f}

Given a semigroup $X$, we need to check the equivalence of the
following statements:
\begin{enumerate}
\item[(1)] $\varphi(X)$ is a commutative Clifford semigroup;
\item[(2)] $\varphi(X)$ is an inverse semigroup;
\item[(3)] the idempotents of
$\varphi(X)$ commute and $\varphi(X)$ is sub-Clifford or regular
in $N_2(X)$;
\item[(4)] $X$ is isomorphic to $C_2$, $L_n$, or
$L_n\sqcup C_2$ for some $n\in\w$.
\end{enumerate}

We shall prove the implications $(4)\Ra(1)\Ra(2)\Ra(3)\Ra(4)$.

The implication $(4)\Ra(1)$ follows from Propositions~\ref{p4.7},
\ref{p4.8} while $(1)\Ra(2)\Ra(3)$ are trivial or well-known; see
\cite[II.1.2]{Pet}.

To prove that $(3)\Ra(4)$, assume that idempotents of the
semigroup $\varphi(X)$ commute and $\varphi(X)$ is sub-Clifford or
regular in $N_2(X)$. Then the idempotents of the semigroup $X$
commute and thus the set $E=\{e\in X:ee=e\}$ is a commutative
subsemigroup of $X$. By analogy with Claim~\ref{cl6.2} we can
prove:

\begin{claim} The semigroup $X$ is inverse.
\end{claim}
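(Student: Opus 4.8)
The plan is to mimic the proof of Claim~\ref{cl6.2}, replacing the superextension $\lambda(X)$ by the semigroup of filters $\varphi(X)$. Since the idempotents of $X$ commute, it suffices to show that $X$ is regular; then $X$ is inverse by \cite[II.1.2]{Pet}. Fix $x\in X$ and identify it with the principal ultrafilter $\la x\ra\in\beta(X)\subset\varphi(X)$. By hypothesis $\varphi(X)$ is sub-Clifford or regular in $N_2(X)$, and I treat the two alternatives separately.

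If $\varphi(X)$ is regular in $N_2(X)$, then there is a linked upfamily $\mathcal B\in N_2(X)$ with $\la x\ra=\la x\ra*\mathcal B*\la x\ra$. As $N_2(X)$ is a subsemigroup of $\upsilon(X)$, this witnesses that $\la x\ra$ is regular in $\upsilon(X)$, so Proposition~\ref{p3.1} gives that $x$ is regular in $X$. If instead $\varphi(X)$ is sub-Clifford, then so is its subsemigroup $X\subset\beta(X)\subset\varphi(X)$; since the idempotents of $\beta(X)\subset\varphi(X)$ commute, Proposition~\ref{p2.1} forces the cyclic subsemigroup $\{x^n\}_{n\in\IN}$ to be finite, hence (being finite and sub-Clifford) a Clifford semigroup, hence a group. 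Thus $x$ lies in a subgroup of $X$ and is therefore regular. In either case every $x\in X$ is regular, so $X$ is a regular semigroup with commuting idempotents, i.e. $X$ is inverse.

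There is essentially no obstacle here. The only points worth checking are that the product $\la x\ra*\mathcal B*\la x\ra$ is the same whether computed in $N_2(X)$ or in $\upsilon(X)$ (true, since $N_2(X)$ is a subsemigroup of $\upsilon(X)$), and that a finite cyclic semigroup which is a union of groups must be a single group (true, since such a semigroup has a unique idempotent, and every subgroup has that idempotent as identity). Note that, unlike in Claim~\ref{cl6.1}, one cannot first upgrade ``regular in $N_2(X)$'' to genuine regularity of $\varphi(X)$, because a linked upfamily need not extend to a filter; but this upgrade is not needed for the present claim, as Proposition~\ref{p3.1} already applies with the witness living in $N_2(X)\subset\upsilon(X)$.
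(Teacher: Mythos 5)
Your proof is correct and follows the route the paper intends: the paper's own ``proof'' of this claim is just the remark that the argument of Claim~\ref{cl6.2} carries over. Your observation that for $\varphi(X)$ one should apply Proposition~\ref{p3.1} directly with the regularity witness living in $N_2(X)\subset\upsilon(X)$ (rather than first upgrading ``regular in $N_2(X)$'' to regularity of $\varphi(X)$, as Claim~\ref{cl6.1} does for $\lambda(X)$ but which is unavailable here since a linked upfamily need not extend to a filter) is exactly the right adaptation.
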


\begin{claim} The semilattice $E$ is linear and finite.
\end{claim}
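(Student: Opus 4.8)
The plan is to adapt the argument of Claim~\ref{cl6.3}, using a principal filter in place of a maximal linked system. Assuming $E$ is not linear, I would fix idempotents $x,y\in E$ with $p:=xy\notin\{x,y\}$; since the idempotents of $X$ commute, $p=yx$ is again an idempotent with $px=py=p$. The single test object will be the principal filter $\mathcal F=\la\{x,y\}\ra=\{A\subset X:\{x,y\}\subset A\}$, which is a filter, so $\mathcal F\in\varphi(X)\subset N_2(X)$. I will derive a contradiction from $\mathcal F$ in each of the two alternatives of hypothesis~(3).

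If $\varphi(X)$ is sub-Clifford, I would compute the powers of $\mathcal F$ straight from the definition of the product of upfamilies. Taking $A=\{x,y\}\in\mathcal F$ and $B_x=B_y=\{x,y\}$ gives $x*\{x,y\}\cup y*\{x,y\}=\{x,p\}\cup\{p,y\}=\{x,y,p\}$, and since every basic generator of $\mathcal F*\mathcal F$ contains this set, $\mathcal F*\mathcal F=\la\{x,y,p\}\ra$; the same computation for $\la\{x,y,p\}\ra*\mathcal F$ (now also using $px=py=p$) gives $\mathcal F*\mathcal F*\mathcal F=\la\{x,y,p\}\ra$. As $p\notin\{x,y\}$, we have $\{x,y\}\in\mathcal F\setminus(\mathcal F*\mathcal F)$, hence $\mathcal F\ne\mathcal F*\mathcal F=\mathcal F*\mathcal F*\mathcal F$, contradicting the sub-Clifford property.

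If $\varphi(X)$ is regular in $N_2(X)$, it is enough to show that $\mathcal F$ is not regular in $\upsilon(X)$ (since $N_2(X)\subset\upsilon(X)$). The key elementary fact is that $xay\notin\{x,y\}$ for every $a\in X$: $xay=x$ would force $xy=(xay)y=xay=x$, while $xay=y$ would force $xy=x(xay)=xay=y$, both contradicting $p\notin\{x,y\}$. So suppose $\mathcal F=\mathcal F*\A*\mathcal F$ for some $\A\in\upsilon(X)$. From $\{x,y\}\in\mathcal F*\A*\mathcal F$ get $U\in\mathcal F*\A$ and $\{C_u\}_{u\in U}\subset\mathcal F$ with $\bigcup_{u\in U}u*C_u\subset\{x,y\}$, and from $U\in\mathcal F*\A$ get $D\in\mathcal F$ and $\{A_d\}_{d\in D}\subset\A$ with $\bigcup_{d\in D}d*A_d\subset U$. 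Since $x\in D$ and $A_x\ne\emptyset$, any $a\in A_x$ satisfies $xa\in U$, so $xay\in xa*C_{xa}\subset\{x,y\}$ (using $y\in C_{xa}$) --- contradicting the fact above. Thus both alternatives fail, so $E$ is linear; its finiteness then follows from Proposition~\ref{p2.1}, since $\beta(X)\subset\varphi(X)$ has commuting idempotents and $E$ is a linear subsemigroup of $X$.

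The only delicate point I foresee is the bookkeeping in the non-regularity argument --- unwinding the two nested applications of the upfamily product and checking that the membership facts used ($xa\in U$, $y\in C_{xa}$, and $xa*C_{xa}\subset\bigcup_{u\in U}u*C_u$) are exactly what the definitions give. The power computation and the observation about $xay$ are routine.
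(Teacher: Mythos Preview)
Your proposal is correct and follows essentially the same route as the paper: the paper also tests the principal filter $\mathcal F=\la\{x,y\}\ra$, asserts $\mathcal F\ne\la\{x,y,xy\}\ra=\mathcal F*\mathcal F=\mathcal F*\mathcal F*\mathcal F$ and that $\mathcal F$ is not regular in $\upsilon(X)$, and then invokes Proposition~\ref{p2.1} for finiteness. The only difference is that the paper outsources both computations to (the proof of) Theorem~1.1 of \cite{BGs}, whereas you spell them out---your non-regularity argument via the observation $xay\notin\{x,y\}$ is exactly the filter-adapted version of the Claim~\ref{cl6.3} technique, and is in fact a bit cleaner since the principal filter has a single minimal generator.
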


\begin{proof} Assuming that the semilattice $E$ is not linear, we can find two non-commuting idempotents $x,y\in E$. By (the proof of) Theorem 1.1 of \cite{BGs}, the filter $\F=\big\la\{x,y\}\big\ra$ is not a regular element in $\upsilon(X)$ and $\F\ne \la\{x,y,xy\}\ra=\F*\F=\F*\F*\F$, which is not possible if the semigroup $\varphi(X)$ is  sub-Clifford or regular in $N_2(X)$. So, the semilattice $E$ is linear. Since $\beta(X)\subset\varphi(X)$, Proposition~\ref{p2.1} implies that the linear semilattice $E$ is finite.
\end{proof}

Since $X$ is an inverse semigroup with finite linear semilattice
$E$, we can apply Theorem 7.5 of \cite{CP2} to derive our next claim.

\begin{claim} The semigroup $X$ is inverse and Clifford.
\end{claim}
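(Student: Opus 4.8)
The plan is to verify the characterization recalled in the introduction: an inverse semigroup is Clifford if and only if $xx^{-1}=x^{-1}x$ for every element $x$. Since we already know that $X$ is inverse, it therefore suffices to fix an arbitrary $x\in X$, set $e=xx^{-1}$ and $f=x^{-1}x$, and prove $e=f$; as $E$ is a chain, $e$ and $f$ are comparable, so we only need to rule out $e<f$ and $f<e$.

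The key tool I would use is the standard fact from the structure theory of inverse semigroups that conjugation by $x$ gives a semilattice isomorphism $\alpha_x\colon g\mapsto xgx^{-1}$ from $E_f:=\{g\in E:g\le f\}$ onto $E_e:=\{g\in E:g\le e\}$, with inverse $h\mapsto x^{-1}hx$. The verification is a short formal computation using only that idempotents commute and that $x^{-1}xx^{-1}=x^{-1}$, $xx^{-1}x=x$: one checks that $\alpha_x(g)$ is an idempotent lying below $e$, that $\alpha_x(g)\alpha_x(h)=\alpha_x(gh)$ for $g,h\le f$, and that the composites $\alpha_{x^{-1}}\circ\alpha_x$ and $\alpha_x\circ\alpha_{x^{-1}}$ are the identities on $E_f$ and on $E_e$. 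This isomorphism is exactly what lies behind the cited Theorem~7.5 of \cite{CP2} on inverse semigroups with a chain of idempotents, so one may alternatively just quote that theorem.

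Now I would bring in finiteness. Writing $E=\{e_1,\dots,e_n\}$ with $e_ie_j=e_i$ whenever $i<j$, the down-set $E_{e_i}=\{e_1,\dots,e_i\}$ has exactly $i$ elements. Since $\alpha_x$ is an isomorphism, $|E_e|=|E_f|$, and in the chain $E$ this forces $e=f$. Hence $xx^{-1}=x^{-1}x$ for all $x\in X$, so $X$ is an inverse Clifford semigroup, which is the claim.

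I expect the only genuine obstacle to be isolating and checking the conjugation isomorphism $\alpha_x$ correctly; the rest is bookkeeping. It is worth emphasizing that finiteness of the chain $E$ is indispensable: over the infinite chain $\omega$ the bicyclic semigroup is an inverse semigroup with a chain of idempotents that is not Clifford, precisely because an infinite chain admits an isomorphism onto a proper initial segment, whereas a finite one does not.
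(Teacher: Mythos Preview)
Your argument is correct and is essentially the same as the paper's: the paper simply invokes Theorem~7.5 of \cite{CP2} (inverse semigroups whose idempotents form a finite chain are Clifford), while you unpack the proof of that theorem via the conjugation isomorphism $\alpha_x\colon E_f\to E_e$ and the cardinality count. Your remark about the bicyclic semigroup showing finiteness is essential is a nice addition.
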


Since the semigroup $X$ is inverse and Clifford, the idempotents
of $X$ commute with all elements of $X$; see \cite[II.2.6]{Pet}.

\begin{claim}\label{cl8.4} Each subgroup $H$ in $X$ has cardinality $|H|\le 2$.
\end{claim}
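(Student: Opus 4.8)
The plan is to argue by contradiction: if $X$ contains a subgroup $H$ with $|H|\ge 3$, I will show that $\varphi(X)$ is neither sub-Clifford nor regular in $N_2(X)$, contradicting hypothesis~(3). As in Claim~\ref{cl6.5} we may assume that $H=H_e$ is the maximal subgroup of $X$ containing the idempotent $e$. Since $X$ is inverse and Clifford, $e$ lies in the centre of $X$; hence $\la e\ra$ is a central idempotent of $\upsilon(X)$, and the principal ideal $eX=Xe=\bigcup_{f\le e}H_f$ of $X$ has the property that $(eX)\setminus H_e=\bigcup_{f<e}H_f$ is an ideal in $eX$ (because $fg\le f<e$ in the semilattice $E$). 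Fix any $x\in H\setminus\{e\}$ and consider the principal filter $\mathcal F_0=\la A\ra\in\varphi(H)\subset\varphi(X)$, where $A=H\setminus\{x\}$.

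For the sub-Clifford part I first observe that $A^2=H$: for any $h\in H$ we can pick $a_1\in H\setminus\{x,hx^{-1}\}$ (possible since $|H|\ge 3$), and then $h=a_1\cdot(a_1^{-1}h)$ with both factors in $A$. Since $\la A\ra*\la B\ra=\la AB\ra$ for principal filters, this gives $\mathcal F_0^2=\la A^2\ra=\la H\ra=\la A^3\ra=\mathcal F_0^3$, whereas $\mathcal F_0=\la A\ra\ne\la H\ra$ because $A\subsetneq H$. Thus $\mathcal F_0^{1+1}=\mathcal F_0^{2+1}$ while $\mathcal F_0^1\ne\mathcal F_0^2$, so $\varphi(X)$ is not sub-Clifford.

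For the regularity part, suppose $\mathcal F_0=\mathcal F_0*\mathcal B*\mathcal F_0$ for some $\mathcal B\in N_2(X)$. Multiplying this equality by the central idempotent $\la e\ra$ on both sides and using $\la e\ra*\mathcal F_0=\mathcal F_0=\mathcal F_0*\la e\ra$, I replace $\mathcal B$ by $\la e\ra*\mathcal B*\la e\ra\in N_2(X)\cap\upsilon(eX)$; then Lemma~\ref{l3.3}, applied with the ambient semigroup $eX$ and $Z=H_e$, produces a linked upfamily $\mathcal B'\in N_2(H_e)$ with $\mathcal F_0=\mathcal F_0*\mathcal B'*\mathcal F_0$. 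Unwinding the product, $A\in\la A\ra*\mathcal B'*\la A\ra$ forces the existence of sets $B_y\in\mathcal B'$, $y\in A$, with $\bigcup_{y\in A}yB_yA\subset A$. For each $y\in A$ and $b\in B_y$ the inclusion $ybA\subset A$ reads $ybH_e\setminus\{ybx\}\subset H_e\setminus\{x\}$, which holds only if $yb=e$; hence $B_y=\{y^{-1}\}$ for every $y\in A$. Since $|A|=|H|-1\ge 2$, the linked upfamily $\mathcal B'$ then contains two distinct (hence disjoint) singletons, a contradiction. Therefore $\mathcal F_0$ is not regular in $N_2(X)$, and neither is $\varphi(X)$.

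The only delicate step is the reduction in the last paragraph: one has to verify that passing from $\mathcal B$ to $\la e\ra*\mathcal B*\la e\ra$ is legitimate (this is where centrality of $e$, and hence the fact that $X$ is inverse and Clifford, enters) and that $(eX)\setminus H_e$ really is an ideal in $eX$, so that Lemma~\ref{l3.3} applies and yields a genuinely linked upfamily supported on $H_e$. Everything else is an elementary computation inside the group $H$, and in particular the argument does not require $H$ to be finite.
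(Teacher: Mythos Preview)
Your proof is correct and follows essentially the same approach as the paper: both use the principal filter $\mathcal F=\la H\setminus\{\text{point}\}\ra$, show $\mathcal F\ne\la H\ra=\mathcal F^2=\mathcal F^3$ to rule out sub-Clifford, and for non-regularity reduce via the central idempotent $e$ and Lemma~\ref{l3.3}/Corollary~\ref{c3.4} to $N_2(H_e)$, where the linkedness is contradicted by the forced singletons $\{y^{-1}\}$. The only differences are cosmetic (you unwind the triple product in one step rather than two, and you spell out the computation $A^2=H$ explicitly).
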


\begin{proof} Assume $X$ contains a subgroup $H$ of cardinality $|H|>2$. We lose no generality assuming that the subgroup $H$ coincides with the maximal subgroup $H_e$ containing the idempotent $e$ of $H$. Take any subset $F\subset H$ with $|H\setminus F|=1$ and consider the filter $\F=\la F\ra$. It follows that $\F\ne \la H\ra=\F*\F=\F*\F*\F$, which is forbidden if the semigroup $\lambda(X)$ is sub-Clifford.
Next, we show that $\F$ is not regular in the semigroup
$N_2(X)\supset\varphi(X)$. Assuming the opposite, find an upfamily
$\A\in N_2(X)$ such that $\F=\F*\A*\F$. Replacing $\A$ by the
linked upfamily $e\A$, we can assume that $\A\in N_2(eX)$. Since
$eX\setminus H_e$ is an ideal in $eX$, Corollary~\ref{c3.4}
implies that the $\F$ is a regular element of the semigroup
$N_2(H)$ and hence we can assume that $\A\in N_2(H)$. Then
$\F*\A\in N_2(H)\subset N_2(X)$. The inclusion $F\in\F=\F*\A*\F$
implies the existence of a set $B\in\F*\A$, $B\subset H$, and a
family $\{F_b\}_{b\in B}\subset \F$ such that $\bigcup_{b\in
B}b*F_b\subset F$. Replacing $F_b$ by the smallest possible set
$F$ generating the filter $\F$, we can assume that $F_b=F$ for all
$b\in B$. Then we get $B*F=\bigcup_{b\in B}b*F_b\subset F$ and
hence $B=\{e\}$. Since $B\in\F*\A$, for the smallest set $F\in\F$
and each point $x\in F$ we can find a set $A_x\subset\A$,
$A_x\subset H$, such that $\bigcup_{x\in F}x*A_x\subset\{e\}$. It
follows that $A_x=\{x^{-1}\}$ and hence the family
$\A\supset\{A_x:x\in F\}$ is not linked, which is a desired
contradiction. So, $\F$ is not regular in $N_2(X)$ and
$\F\ne\F*\F=\F*\F*\F$, which is not possible if the semigroup
$\lambda(X)$ is sub-Clifford or regular in $N_2(X)$.
\end{proof}

By analogy with Claim~\ref{cl6.6} we can prove:

\begin{claim}\label{cl8.5} The semigroup $\varphi(X)$ is regular in $N_2(X)$.
\end{claim}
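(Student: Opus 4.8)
The plan is to follow the pattern of Claim~\ref{cl6.6}. The first step is to record that, by the claims established so far in this section (in particular Claim~\ref{cl8.4}), the semigroup $X$ is finite: indeed $X$ is inverse and Clifford, its idempotent semilattice $E$ is a finite linear semilattice, and every maximal subgroup $H_e$, $e\in E$, has cardinality at most $2$; since a Clifford semigroup equals the union $\bigcup_{e\in E}H_e$ of its maximal subgroups, $X$ is a finite union of finite groups and hence finite. In particular the extension $\varphi(X)$ is finite.

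Next I would invoke the hypothesis of condition (3): the semigroup $\varphi(X)$ is sub-Clifford or regular in $N_2(X)$. If $\varphi(X)$ is already regular in $N_2(X)$, there is nothing to prove. So assume that $\varphi(X)$ is sub-Clifford. Being finite and sub-Clifford, $\varphi(X)$ is then Clifford, i.e.\ a union of groups; in particular each $\A\in\varphi(X)$ lies in a subgroup of $\varphi(X)$ and is therefore a regular element of $\varphi(X)$, so there is $\mathcal C\in\varphi(X)$ with $\A=\A*\mathcal C*\A$. Since $\varphi(X)\subset N_2(X)$, this exhibits $\A$ as a regular element of $N_2(X)$ as well. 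Hence $\varphi(X)$ is regular in $N_2(X)$ in both cases.

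There is no genuine obstacle here: once the structural claims above have been established, the argument is a short bookkeeping step, entirely parallel to Claim~\ref{cl6.6}. The only point worth isolating is the trivial observation that regularity of $\varphi(X)$ as an abstract semigroup forces its regularity inside the larger semigroup $N_2(X)$, which is immediate from the inclusion $\varphi(X)\subset N_2(X)$ recorded in the introduction.
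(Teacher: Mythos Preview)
Your proof is correct and follows exactly the approach the paper intends when it writes ``By analogy with Claim~\ref{cl6.6} we can prove.'' The only content is that the structural claims already established (finite linear $E$, Clifford, $|H_e|\le 2$) force $X$ and hence $\varphi(X)$ to be finite, after which the sub-Clifford alternative in hypothesis (3) upgrades to Clifford and therefore to regularity; you have this precisely right, including the trivial passage from regularity in $\varphi(X)$ to regularity in $N_2(X)$.
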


Since the semilattice $E$ is linear, we can write it as
$E=\{e_1,\dots,e_n\}$ where $e_ie_j=e_i\ne e_j$ for all $1\le
i<j\le n$. For every $i\le n$ by $H_i=H_{e_i}$ denote the maximal
subgroup of $X$ that contains the idempotent $e_i$. By
Claim~\ref{cl8.4}, each subgroup $H_i$ has cardinality $|H_i|\le
2$ and hence is commutative. Then the inverse Clifford semigroup
$X$ also is commutative.

\begin{claim}\label{cl8.6} For any $1\le i<n$ the maximal subgroup $H_i$ is trivial.
\end{claim}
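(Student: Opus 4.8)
The plan is to mimic the proofs of Claim~\ref{cl6.7} (the superextension analogue) and of Claim~\ref{cl8.4}: suppose $H_i$ is nontrivial for some $1\le i<n$, so that $|H_i|=2$ and $H_i=\{e_i,a\}$ with $a^2=e_i$ and $a\ne e_i$; then produce a concrete filter $\F\in\varphi(X)$ built from $a$ and the idempotent $e_{i+1}$ lying just above $e_i$ (which exists since $i<n$), and show that $\F$ is not a regular element of $N_2(X)$, contradicting Claim~\ref{cl8.5}. The candidate is the principal filter $\F=\big\la\{a,e_{i+1}\}\big\ra$. Using the Clifford relations $e_ie_{i+1}=e_i$ and $ae_{i+1}=(ae_i)e_{i+1}=a(e_ie_{i+1})=ae_i=a$ one computes at once that $\F*\F=\big\la\{e_i,a,e_{i+1}\}\big\ra\ne\F$ while $\F*\F*\F=\F*\F$, which already rules out the ``sub-Clifford'' alternative; the substance of the argument is the failure of regularity.

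So assume $\F=\F*\A*\F$ for some $\A\in N_2(X)\subset\upsilon(X)$. Unwinding the definition of the product, the membership $\{a,e_{i+1}\}\in(\F*\A)*\F$ supplies a set $C\in\F*\A$ and sets $F_c\in\F$ ($c\in C$) with $\bigcup_{c\in C}c*F_c\subset\{a,e_{i+1}\}$. Since each $F_c\supset\{a,e_{i+1}\}$, this forces $c*\{a,e_{i+1}\}\subset\{a,e_{i+1}\}$, that is $ca\in\{a,e_{i+1}\}$ and $ce_{i+1}\in\{a,e_{i+1}\}$, for every $c\in C$. The crux is to locate $C$ in the chain of maximal subgroups. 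Writing $c\in H_j$ and invoking the Clifford multiplication rule $ca\in H_{\min\{i,j\}}$ (valid because $e_ie_j=e_{\min\{i,j\}}$; see \cite[II.2]{Pet}): the case $j<i$ is impossible, since then $ca\in H_j$ is disjoint from $\{a,e_{i+1}\}\subset H_i\cup H_{i+1}$; and the case $j=i$ is impossible, since then $c,a\in H_i$ forces $ca=a$ (the unique element of $\{a,e_{i+1}\}$ lying in $H_i$), hence $c=e_i$, and then $ce_{i+1}=e_ie_{i+1}=e_i\notin\{a,e_{i+1}\}$. Therefore $j\ge i+1$ for every $c\in C$, i.e.\ $C\subset\bigcup_{i<k\le n}H_k$.

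The contradiction then comes from the remaining factor. From $C\in\F*\A$ we get $D\in\F$ and $\{B_d\}_{d\in D}\subset\A$ with $\bigcup_{d\in D}d*B_d\subset C$; since $a\in\{a,e_{i+1}\}\subset D$, this yields $a*B_a\subset C$, and $B_a\ne\emptyset$ because $\A\in\upsilon(X)$, so $a*B_a\ne\emptyset$. But $a\in H_i$ gives $a*B_a\subset aX\subset\bigcup_{1\le k\le i}H_k$, a set disjoint from $\bigcup_{i<k\le n}H_k\supset C$ --- contradicting $\emptyset\ne a*B_a\subset C$. Hence $\F$ is not regular in $N_2(X)$, which proves the claim.

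I expect the one genuine obstacle to be the ``location of $C$'' step: extracting the sharp conclusion $C\subset\bigcup_{k>i}H_k$ (rather than the weaker $C\subset\bigcup_{k\ge i}H_k$) requires using both constraints $ca\in\{a,e_{i+1}\}$ and $ce_{i+1}\in\{a,e_{i+1}\}$ simultaneously, together with the case split on the position $j$ of $c$; once that is in place the final clash of inclusions is immediate. Everything else --- the product-formula bookkeeping and the Clifford rule $H_iH_j\subset H_{\min\{i,j\}}$ --- is routine. It is worth noting that the hypothesis is $i<n$, not $1<i<n$ as in Claim~\ref{cl6.7}: the argument only ever uses the idempotent $e_{i+1}$ sitting above $e_i$ and never one below it, which is exactly why $i=1$ is permitted here, and correspondingly why $L_2\times C_2$ --- whose bottom maximal subgroup is nontrivial --- is excluded from Theorem~\ref{t1f} though admitted by Theorem~\ref{t1l}.
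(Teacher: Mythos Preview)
Your proof is correct and follows essentially the same approach as the paper: both take the filter $\F=\big\la\{a,e_{i+1}\}\big\ra$ and show it is not regular in $N_2(X)$, with the key computation being $e_i e_{i+1}=e_i\notin\{a,e_{i+1}\}$. The only organizational difference is that the paper first replaces $\A$ by $e_{i+1}\A$ to localize to $N_2(e_{i+1}X)$ (so the set you call $C$ lands in $H_i\cup H_{i+1}$ automatically), whereas you work with a general $\A$ and obtain the sharper inclusion $C\subset\bigcup_{k>i}H_k$ directly before deriving the clash with $aB_a\subset\bigcup_{k\le i}H_k$; your explicit $\F*\F\ne\F=\!=\F^3$ verification is extra insurance, since Claim~\ref{cl8.5} already supplies regularity.
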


\begin{proof} Assume conversely that for some $i<n$ the subgroup $H_i$ is not trivial and take any element $a\in H_i\setminus E$.
Next, consider the filter $\F=\la F\ra$ generated by the
doubleton $F=\{a,e_{i+1}\}$. We claim that $\F$ is a non-regular
element in the semigroup $N_2(X)$, which will contradict
Claim~\ref{cl8.5}.

Assuming that $\F$ is regular in $N_2(X)$, we can find a linked
upfamily $\A\in N_2(X)$ such that $\F=\F*\A*\F$. Replacing $\A$ by
$e_{i+1}\A$, we can assume that $\A\in N_2(e_{i+1}X)$.
 For the set $F=\{a,e_{i+1}\}\in\F$, find a set $B\in\F*\A$ such that $B*F\subset F$.
The latter inclusion implies that $B\subset H_i\cup H_{i+1}$. The
inclusion $B\in \F*\A$ implies that the intersection $B\cap H_i$
is not empty and the inclusion $B*F\subset F$ implies that $B\cap
H_i=\{e_i\}$ and then $\{a,e_{i+1}\}=F\supset B*F\supset
\{e_i\}*\{a,e_{i+1}\}=\{a,e_i\}$, which is a desired
contradiction.
\end{proof}

Now we are able to finish the proof of Theorem~\ref{t1f}. By
Claim~\ref{cl8.6}, all maximal subgroups $H_i$, $i<n$, are
trivial. If the group $H_n$ is trivial, then $X=E$ is isomorphic
to the linear semilattice $L_n$. If $H_n$ is not trivial, then
$H_n\cong C_2$ by Claim~\ref{cl8.4} and $X$ is isomorphic to the
semigroup $L_{n-1}\sqcup C_2$. For $n=1$ we get $L_0=\emptyset$
and $L_0\sqcup C_2=C_2$.

\section{Proof of Theorem~\ref{t1n}}\label{s:t1n}

Given a semigroup $X$ we need to prove the equivalence of the
following statements:
\begin{enumerate}
\item[(1)] $N_2(X)$ is a finite commutative Clifford semigroup;
\item[(2)] $N_2(X)$ is an inverse semigroup;
\item[(3)] idempotents of $N_2(X)$
commute and $N_2(X)$ is sub-Clifford or regular;
\item[(4)] $X$ is isomorphic to  $C_2$ or $L_n$ for some $n\in\w$.
\end{enumerate}

We shall prove the implications $(4)\Ra(1)\Ra(2)\Ra(3)\Ra(4)$.
\smallskip

The implication $(4)\Ra(1)$ follows from
Propositions~\ref{p4.7}(1) and \ref{p4.2} while $(1)\Ra(2)\Ra(3)$
are trivial or well-known; see \cite[II.1.2]{Pet}.
\smallskip

To prove that $(3)\Ra(4)$, assume that idempotents of the
semigroup $N_2(X)$ commute and $N_2(X)$ is sub-Clifford or
regular. Then the idempotents of the subsemigroups $\lambda(X)$
and $\varphi(X)$ of $N_2(X)$ also commute  and these semigroups
are regular in the semigroup $N_2(X)$. By Theorems~\ref{t1l} and
\ref{t1f}, the semigroup $X$ is isomorphic to one of the
semigroups $C_2$, $L_1\sqcup C_2$ or $L_n$ for some $n\in\w$. It
remains to prove that $X$ cannot be isomorphic to $L_1\sqcup
C_2=\{0,1,-1\}$.

This follows from the fact that the semigroup $N_2(\{0,1,-1\})$
contains two idempotents $\Delta=\{A\subset \{0,1,-1\}:|A|\ge 2\}$
and $\F=\la\{\{0,1,-1\}\ra$, which do not commute because
$\Delta*\F=\F\ne\la \{0,1\},\{0,-1\}\ra=\F*\Delta$.

\section{Proof of Theorem~\ref{t1u}}\label{s:t1u}

Given a semigroup $X$ we need to prove the equivalence of the
following statements:
\begin{enumerate}
\item[(1)] $\upsilon(X)$ is a finite commutative Clifford semigroup;
\item[(2)] $\upsilon(X)$ is an inverse semigroup;
\item[(3)] idempotents of
$\upsilon(X)$ commute and $\upsilon(X)$ is sub-Clifford or
regular;
\item[(4)] $X$ is a finite linear semilattice, isomorphic to $L_n$ for some $n\in\w$.
\end{enumerate}
\smallskip

We shall prove the implications $(4)\Ra(1)\Ra(2)\Ra(3)\Ra(4)$.
\smallskip

The implication $(4)\Ra(1)$ follows from Proposition~\ref{p4.2}
while $(1)\Ra(2)\Ra(3)$ are trivial.
\smallskip

To prove that $(3)\Ra(4)$, assume that the idempotents of the
semigroup $\upsilon(X)$ commute and $\upsilon(X)$ is sub-Clifford
or regular. Then the idempotents of the semigroup $X$ commute and
thus the set $E=\{e\in X:ee=e\}$ is a commutative subsemigroup of
$X$. By analogy with Claims~\ref{cl6.2}---\ref{cl6.4} we can prove
that the semigroup $X$ is inverse and Clifford and the semilattice
$E$ is finite and linear.

Next, we show that each subgroup $H$ of $X$ is trivial.  Assume
conversely that $X$ contains a non-trivial subgroup $H$. Then the
filter $\F=\la H\ra$ and the upfamily $\U=\{A\subset
H:A\ne\emptyset\}$ are two non-commuting idempotents in the
semigroup $\upsilon(H)\subset\upsilon(X)$ (because
$\F*\U=\U\ne\F=\U*\F$).

Now we see that the inverse Clifford semigroup $X$ contains no
non-trivial subgroups and hence coincides with its maximal
semilattice $E$, which is finite and linear.

\end{document}